\newcommand{\Category}[1]{\mathcal #1}
\newcommand{\Hom}[3]{\mathrm{Hom}_{#1} \mleft(#2, #3\mright)}
\newcommand{\Sets}{\mathsf{Sets}}
\newcommand{\Ab}{\mathsf{Ab}}
\newcommand{\Cat}{\mathsf{Cat}}
\newcommand{\TwoCat}{\mathsf{2}\text{-}\mathsf{Cat}}
\newcommand{\IdentityFunctor}{\mathrm{Id}}
\newcommand{\Op}{^\mathrm{op}}
\newcommand{\CanonicalTwist}{\tauup}
\newcommand{\lax}{\lambda}
\newcommand{\oplax}{\omega}
\newcommand{\Oplax}{\mathsf{Oplax}_\mathrm{lax}}
\newcommand{\Lax}{\mathsf{Lax}_\mathrm{lax}}
\newcommand{\Skew}{\mathsf{Skew}_\mathrm{lax}}
\newcommand{\Strong}{\mathsf{Strong}_\mathrm{lax}}
\newcommand{\Mon}{\mathsf{Mon}_\mathrm{lax}}
\newcommand{\Multi}{\mathsf{Multi}}
\newcommand{\Ring}{R}
\newcommand{\EnrichedHom}[2]{\mleft[#1,#2\mright]}
\newcommand{\OplaxHom}[1]{\mleft[#1\mright]_\oplax}
\newcommand{\Objects}{\mathrm{Ob}}
\newcommand{\ReducedSequences}{\mathsf{Seq}^\ast}
\newcommand{\Insertion}{\vartriangleleft}
\newcommand{\ChangeEnrichment}{f}
\newcommand{\InducedHom}[1]{[#1]_\ChangeEnrichment}
\newcommand{\BimodulesOn}[1]{(#1, #1)\text{-}\mathsf{bimod}}
\newcommand{\Enveloping}{^\mathrm{e}}
\newcommand{\Underlying}[1]{\underline{#1}}
\newcommand{\Induction}{\mathfrak H}
\newcommand{\Collection}[1]{\boldsymbol{#1}}
\newcommand{\End}{\mathrm{End}}
\newcommand{\Vdelta}{\Category V\text{-}\deltaup}
\newcommand{\etaVCat}{\eta}
\newcommand{\muVCat}{\mu}
\newcommand{\thetaVCat}{\theta}
\newcommand{\nuVCat}{\nu}
\newcommand{\fVCat}{f}
\newcommand{\alphaVCat}{\alpha}
\newcommand{\hVCat}{h}
\newcommand{\ba}{\boldsymbol{a}}
\newcommand{\bb}{\boldsymbol{b}}
\newcommand{\bd}{\boldsymbol{d}}
\newcommand{\bx}{\boldsymbol{x}}
\newcommand{\ie}{\textit{i.e.}~}
\newcommand{\vs}{\textit{vs.}~}
\newcommand{\inout}[2]{
	\mathchoice
		{\raisebox{0.5pt}{\(\tbinom{#1}{#2}\)}}
		{\raisebox{0.5pt}{\(\tbinom{#1}{#2}\)}}
		{\raisebox{0.4pt}{\scalebox{0.73}{\(\tbinom{#1}{#2}\)}}}
		{}
	}
\newcommand{\UnskipRef}[1]{\unskip~\textnormal{[\ref{#1}]}}
\newcommand{\DoubleUnskipRef}[2]{\unskip~\textnormal{[\ref{#1}, \ref{#2}]}}
\newcommand{\SectionRef}[1]{\unskip~[\hyperref[#1]{\S\thinspace\ref{#1}}]}
\theoremstyle{plain}
\newtheorem{obvious lemma}[theorem]{Obvious lemma}
\newcommand{\uB}{\underaccent{\bar}{\Category B}}
\newcommand{\uC}{\underaccent{\bar}{\Category C}}
\newcommand{\uF}{\underaccent{\bar}{F}}
\newcommand{\Integral}[1]{\int_{#1}}
\newcommand{\VCats}{\mathcal V\text{-}\mathsf{cat}}
\newcommand{\UCats}{\mathcal U\text{-}\mathsf{cat}}
\newcommand{\VMods}{\mathcal V\text{-}\mathsf{mod}_\mathrm{lax}}
\begin{document}
\maketitle

\begin{abstract}
	We define a notion of category enriched
	over an oplax monoidal category \( \Category V \),
	extending the usual definition
	of category enriched over a monoidal
	category.
	Even though oplax monoidal structures
	involve
	infinitely many `tensor product' functors
	\( \Category V^n \to \Category V \),
	the definition of categories
	enriched over \( \Category V \)
	only requires the lower arity maps 
	\( (n \leq 3) \),
	similarly to the monoidal case.
	
	The focal point of the enrichment theory shifts,
	in the oplax case,
	from the notion of
	\( \Category V \)\=/category
	(classically given by collections of
	objects and hom-objects together with composition and unit
	maps) to the one
	of categories enriched over \( \Category V \)
	(genuine categories
	equipped with additional structures).

	One of the merits of the notion of categories 
	enriched over \( \Category V \)
	is that
	it becomes straightforward to define
	both enriched functors and enriched natural transformations.
	We show moreover that the resulting 2-category
	\( \Cat_{\Category V} \)
	can be put in correspondence (via the theory of distributors)
	with the 2-category of modules over \( \Category V \).

	We give an example of such an enriched category in the framework
	of operads: every cocomplete symmetric monoidal
	category \( \Category C \) is enriched over the
	category of sequences in \( \Category C \)
	endowed with an oplax monoidal structure
	stemming from the usual operadic composition
	product, whose monoids are still the (planar) operads.

	As an application of the study of the 2-functor
	\( \Category V \mapsto \Cat_{\Category V} \),
	we show that when
	\( \Category V \) is also endowed with a
	compatible lax monoidal structure---thus forming a lax-oplax duoidal
	category---the \( 2 \)\=/category
	\( \Cat_{\Category V} \) inherits 
	a lax \( 2 \)\=/monoidal structure, thereby generalising
	the corresponding result when the enrichment base is a braided monoidal
	category.
	We illustrate this result by discussing in details
	the lax-oplax structure on
	the category of \( (\Ring\Enveloping, \Ring\Enveloping) \)\=/bimodules,
	whose bimonoids are the bialgebroids.
	
	We conclude by commenting on the relations between
	the enrichment theory over oplax monoidal categories
	and other enrichment theories (monoidal, multicategories,
	skew and lax).
\end{abstract}

\tableofcontents

\footnotefirstpage

\section*{Introduction}

In the 1960s arose the first formalisations of the notion of enriched
categories
\cite{doi:10.4153/cjm-1965-076-0,
zbMATH03220364},
where the set of arrows between
two objects in a category could bear additional structure.
Given two objects \( x, y \), one would associate a mapping object
\( [x,y] \) of any other external `base category'
\( \Category V \),
provided that this category is endowed with a tensor structure,
necessary to define the composition morphisms
\[
	[y, z] \otimes [x, y] \longrightarrow [x, z]
\]
and composition units \( 1_\otimes \to [x, x] \), which
plays the rôle of
the usual product structure on the category of sets.
In addition, the regular set of `morphisms'
between two objects \( x \) and \( y \)
could be recovered
as the set of maps \( 1_\otimes \to [x,y] \) in \( \Category V \).

The notion of enriched category then got generalised through the years,
allowing enrichments over multicategories, double categories etc.
This culminated at the end of the century with the definition by
Leinster of `the most general structure' one could use to
enrich a category: fc-multicategories
\cite{arXiv:9901139},
which are
`very general kinds of two-dimensional structures,
encompassing bicategories, monoidal categories,
double categories and ordinary multicategories'.

Here we shall go in the opposite direction and develop an enrichment
theory which is only slightly more general than the classical one:
enrichment over an oplax monoidal category. Being only slightly more
general,
it has the benefit that the theory is mostly similar
to the traditional case.

In a tensor category, the symbol \( x \otimes y \otimes z \) is
formally undefined but tacitly assumed to mean \( (x \otimes y) \otimes z \)
(the other choice being possible as well), in which case one has
an isomorphism \( x \otimes y \otimes z \IsIsomorphicTo x \otimes (y
\otimes z) \) which is simply the associator.
In an oplax monoidal category, the symbol \( x \otimes y \otimes z \)
is supplied together with two decomposition maps
\[
	(x \otimes y) \otimes z
	 \longleftarrow x \otimes y \otimes z
	 \longrightarrow x \otimes (y \otimes z) 
\]
which are not assumed to be invertible.
These non-invertible maps are all that is required
to formulate the associativity axiom of enriched categories
as a symmetric hexagon
\[
	\begin{tikzcd}
		&[-15ex]
		\EnrichedHom y z \otimes
		\EnrichedHom x y \otimes
		\EnrichedHom w x
		\arrow[dl]
		\arrow[dr] &[-15ex]
		\\
		\big(\EnrichedHom y z \otimes \EnrichedHom x y\big)
		\otimes \EnrichedHom{w}{x}
		\arrow[d]
		&&
		\EnrichedHom{y}{z} \otimes
		\big(\EnrichedHom x y \otimes \EnrichedHom w x\big)
		\arrow[d]
		\\
		\EnrichedHom x z \otimes \EnrichedHom w x
		\arrow[dr]
		&& \EnrichedHom y z \otimes \EnrichedHom w y
		\arrow[dl]
		\\
		& \EnrichedHom{w}{z} &
	\end{tikzcd}
\]
thus replacing the usual---asymmetric---pentagon.

Even though the definition of an oplax monoidal structure involves
a 4-ary symbol \( w \otimes x \otimes y \otimes z \), a 5-ary symbol
et cætera up to infinity,
we shall explain why the oplax nature of the structure of
\( \Category V \) makes the use of the \( n \)-ary symbols
redundant for \( n > 3 \) when it comes to enrichment
\UnskipRef{prop:truncation}.
Thanks to this, the definition of a category enriched over
an oplax monoidal category remains very akin to the usual case.

The main departure
from the usual theory concerns the necessary distinction
between the concept
of \( \Category V \)-category and the more relevant concept of
`category enriched over \( \Category V \)'.
In the second case, one starts with a
genuine category
\( \Category C \) which one then endows with a bifunctor
\[
	\begin{tikzcd}
		\Category C\Op \times \Category C
		\rar["{[-,-]}"]
			& \Category V
	\end{tikzcd}
\]
thus yielding two distinct notions of morphisms:
the usual morphisms of
\( \Category C \) (the strong morphisms) versus the maps of the form
\( 1_\otimes \to [x,y] \) (the weak morphisms).
In this regard, a category enriched over \( \Category V \) can really be
thought of as `a category with an extra enrichment structure'
thus making the definition of the 2-category \( \Cat_{\Category V} \)
of categories enriched over \( \Category V \)
straightforward to articulate.

Contradistinctly, working with
\( \Category V \)-categories---created by assigning objects
\( [x, y] \in \Category V \) to each pair of objects \( x, y \) etc---one
 is quickly led to the problem that the assignment
\( (x, y) \mapsto [x, y] \) is not generically functorial
and that as a result, there is no obvious notion
of \( \Category V \)-natural transformation between
\( \Category V \)-functors that one could use to build a 2-category
\( \VCats \).

This chasm between \( \VCats \) and \( \Cat_{\Category V} \)
was already found by Campbell in the skew context
\cite{doi:10.1007/s10485-017-9504-0}.
It is not
of a mere technical nature but is also justified from a theoretical standpoint.
Whenever \( \Category V \) is endowed with a monoidal structure,
\( \VCats \) can be put in correspondence---via the use
of distributors---with the 2-category
of \( \Category V \)-modules
(also called `\( \Category V \)\=/actegories')
\cite{zbMATH01687308}, where
the denomination `module' here refers to the fact that monoidal categories
correspond to pseudo-monoids in the 2-category \( \Cat \).
If  \( \Category V \) is instead endowed
with an oplax monoidal structure,
it can then be seen as an oplax monoid in \( \Cat \)
\cite{zbMATH01963527} making it natural
to consider its 2-category of modules.
Making use of distributors again to build a correspondence
with \( \Category V \)-modules on one side, what one obtains
on the other side is precisely 
the 2-category \( \Cat_{\Category V} \) rather than \( \VCats \).

As an application of the general formalism, the present paper provides two examples of this enrichment theory which were
motivational to the authors.
The first example occurs within the theory of operads in a symmetric monoidal
category \( \Category C \)
\SectionRef{sec:operads}.
When the monoidal structure on \( \Category C \) is closed,
a theorem by Kelly
\cite{MR2177746},
central to the theory of operads, asserts that
(planar) operads in \( \Category C \) can be realised as the monoids
in the category of sequences in \( \Category C \)
endowed with an operadic-composition
monoidal structure.
However, when the monoidal structure on \( \Category C \) is not
closed, the operadic-composition no longer
necessarily defines a monoidal structure. 
Rather, the operadic-composition
induces an oplax monoidal structure on \( \Category C \), as
shown by Ching
\cite{doi:10.1007/s40062-012-0007-2},
whose monoids are still the (planar) operads.
The base category \( \Category C \) then becomes enriched over
the oplax monoidal category of sequences via a typical formula from
the theory of operads.
This setup is at the hearth of the theory since an algebra
over an operad in \( \Category C \) simply becomes a representation
of a monoid in the enriching oplax monoidal category.

The second example deals with the formalisation of the theory
of (non-commutative) bialgebroids
\SectionRef{sec:Re-bimod}.
Given a ring \( R \), let \( R\Enveloping \) denote the enveloping
ring \( R \otimes R\Op \).
Following Takeuchi
\cite{doi:10.2969/jmsj/02930459}, a bialgebroid can be defined as
a \( (R\Enveloping, R\Enveloping) \)\=/bimodule, denoted \( A \), together with
structure maps among which two structure maps
\[
	A \otimes_{R\Enveloping} A \longrightarrow A
	\qand
	A \longrightarrow A \times_R A \subset A \otimes_R A
\]
enjoying compatibility conditions resembling the ones of a bialgebra.
On the one hand the category
\( \BimodulesOn{R\Enveloping} \)
of
\( (R\Enveloping, R\Enveloping) \)-bimodules
is endowed with a genuine tensor structure given by
\( \otimes_{R\Enveloping} \), so that a bialgebroid
is in particular a monoid in this monoidal category.
On the other hand, the restricted tensor product
\( \times_R \subset \otimes_R \)
due to Sweedler
\cite{doi:10.1007/BF02685882}
and Takeuchi
\cite{doi:10.2969/jmsj/02930459}
is not associative.
We shall show that it endows
\( \BimodulesOn{R\Enveloping} \) with a lax monoidal
structure, of which only the first stages were described by
Takeuchi.
In addition, we shall show that the two structures
\( \otimes_{R\Enveloping} \) and \( \times_R \) possess natural
compatibility structures endowing
\( \BimodulesOn{R\Enveloping} \)
with a lax-oplax duoidal structure.
This general structure, involving both a lax monoidal structure and
an oplax monoidal structure, constitutes a natural generalisation of the notion of
duoidal structure in which the notion of bimonoid still makes sense \cite{doi:10.1090/conm/771}.

We shall then be able to say that:

\begin{center}
`Bialgebroids are the bimonoids
in the lax-oplax duoidal category of
\( (R\Enveloping, R\Enveloping) \)\=/bimodules.'
\end{center}

Lastly, the study of the 2-functor
\( \Category V \mapsto \Cat_{\Category V} \)
mapping each oplax monoidal category \( \Category V \)
to its corresponding 2-category of categories
enriched over \( \Category V \)
\SectionRef{sec:V to Cat_V} will allow us to claim that,
for every lax-oplax duoidal category \( \Category V \),
its associated 2-category \( \Cat_{\Category V} \) is
canonically endowed with a lax monoidal structure, thus extending
a well-known result about the 2-category of categories enriched
over a braided monoidal category.
Applied to \( \BimodulesOn{\Ring\Enveloping} \),
the 2-category
of categories enriched over \( \BimodulesOn{\Ring\Enveloping} \)
becomes endowed with a
lax monoidal structure stemming from the restricted
tensor product \( \times_R \).
A natural example of such a category
is the
category of \( (R, R) \)-bimodules.
More generally, the category of modules over a bialgebroid is
canonically enriched over the lax-oplax duoidal category of
\( (R\Enveloping, R\Enveloping) \)\=/bimodules,
which the authors expect should lead to new
developments in the theory of bialgebroids.

\section*{Notations for integral calculus}
\label{sec:notation}
\begin{description}
\item[Integrals]
	Following Yoneda's original notation
	\cite{On_Ext_and_exact_sequences},
	we shall denote
	\[
		\Integral{b} F(b,b)
	\]
	the integral of a functor
	\( F \From \Category B\Op \times \Category B \to \Category C \).
	This convention is the opposite of the Australian convention where
	subscripts usually denote cointegrals (`ends') 
	instead of integrals (`coends').
	We shall however denote the cointegral of \( F \) using the
	superscript
	\[
		\int^b F(b,b)
	\]
	which differs from both Yoneda's and the Australian school's
	conventions, but is instead standard in the literature
	about bialgebroids
	\cite{doi:10.1007/BF02685882,
	doi:10.2969/jmsj/02930459, doi:10.1023/A:1008608028634}
	which we shall discuss in a dedicated section
	\SectionRef{sec:Re-bimod}.
\item[Sequence of objects]
	We shall denote a sequence of \( n+1 \) objects
	in a category by
	\[
		\Collection x \coloneqq (x_0, \dots, x_n)
	\]
	and
	\[
		\Collection x \oplus \Collection y
		= (x_0, \dots, x_m, y_0, \dots, y_n)
	\]
	for the union of two sequences \( \Collection x \)
	and \( \Collection y \).
	We shall also write
	\[
		\Collection x = (\Collection{x}_a,\Collection{x}_b,\Collection{x}_c)
	\]
	whenever a sequence can be split into three subsequences
	\( \Collection{x}_a \), \( \Collection{x}_b \)
	and \( \Collection{x}_c \), and
	\[
		\Collection{x \otimes y} \coloneqq (x_0 \otimes y_0, \dots, x_n \otimes y_n)
	\]
	the sequence of term-by-term tensor products
	if the category is monoidal.

\item[Hom-sets]
	For any category \( \Category C \),
	we shall denote its hom-sets vertically as
	\[
		\Category C \inout{x}{y}
		\coloneqq \Hom{\Category C}{x}{y}
	\]
	and likewise, denote the values of a (set-valued coloured) operad
	\( \Category P \) by \( \Category P \inout {\Collection x} {y} \).
	The composition of two arrows then becomes a map
	written
	\( \Category C \inout y z \times \Category C \inout x y
	\to \Category C \inout x z \) for example.
\end{description}

\section{Oplax monoidal categories}
We shall start by reviewing the definition
and basic features of the \( 2 \)\=/category
\( \Oplax \) of oplax monoidal categories,
a notion that has yet to receive a full
treatment in the literature.
Along the way, we introduce several conventions
of notation that will be abundantly
used throughout the paper.

In order to maximise the readability of a subject
which necessarily involves an infinite amount
of structural maps, we have chosen to:
avoid writing indices and variables as much
as possible;
start all indexations from \( n = -1 \),
so as to obtain an easy
`\( p + q \) additivity rule' on the structural maps;
change the set of notations depending on the
case at hand.
Other presentation and notation choices
can be found in
Day \& Street
\cite{zbMATH01963527},
Leinster
\cite{doi:10.1017/cbo9780511525896},
Batanin \& Weber
\cite{doi:10.1007/s10485-008-9179-7}
or B\"ohm \& Vercruysse
\cite{doi:10.1090/conm/771}.

\subsection{Definition and notations}
\label{sec:def_oplax_cat}

\begin{definition}[(Oplax monoidal category)]
	\label{Definition: oplax monoidal category}
	An oplax monoidal category \( \Category V^\oplax \) is 
	a category \( \Category V \) equipped with the following data:
	\begin{itemize}
		\item
			functorial structure maps
			\[
				\begin{tikzcd}
					{\Category V}^{n+1}
					\rar["\oplax^n"]
						& \Category V
				\end{tikzcd}
			\]
			for every integer \( n \geq -1 \);
		\item
			natural transformations
			\[
				\begin{tikzcd}[column sep = large]
					\oplax^{p+q}
					\arrow[r, "{a^{p,q}_i}"]
						&\oplax^p
					\circ (\underbrace{
						\IdentityFunctor, \dots, \IdentityFunctor
					}_{i \text{ times}},
					\oplax^q,
					\underbrace{
						\IdentityFunctor, \dots, \IdentityFunctor
					}_{j \text{ times}})
				\end{tikzcd}
			\]
			for every \( p \geq 0 \), every \( q \geq -1 \)
			and every \( i \geq 0, j \geq 0 \) with
			\( i+j = p \), called associators, which we shall
			abbreviate as
			\[
				\begin{tikzcd}[column sep = large]
					 \oplax^{p+q}
					\rar
						&\oplax^p (-, \oplax^q, -)
				\end{tikzcd}
			\]
		\item
			a counit natural transformation
			\[
				\begin{tikzcd}[column sep = large]
					 \oplax^0
					\arrow[r, "i"]
						&\IdentityFunctor
				\end{tikzcd}
			\]
	\end{itemize}
	satisfying the following axioms
	\begin{description}
		\item[counitality]
			\[
				\begin{tikzcd}
					\oplax^p
					\ar[rd, equal]
					\rar
						& \oplax^0 \oplax^p
						\dar
						\\
						& \oplax^p
				\end{tikzcd}
				\qand
				\begin{tikzcd}
					\oplax^p
					\ar[rd, equal]
					\rar
						& \oplax^p(-, \oplax^0,-)
						\dar
						\\
						& \oplax^p
				\end{tikzcd}
			\]
			commute whenever they make sense;
		\item[Parallel decomposition]
			\[
				\begin{tikzcd}[column sep = 70]
					\oplax^{p+q+r}
					\arrow[r]
					\arrow[d]
						&\oplax^{p+q}(-, \oplax^r, -)
						\arrow[d] \\
					\oplax^{p+r}(-, \oplax^q, -)
					\arrow[r]
						& \oplax^p (-, \oplax^q, -, \oplax^r, -)
				\end{tikzcd}
			\]
			commutes whenever it makes sense;
		\item[Sequential decomposition]
			\[
				\begin{tikzcd}[column sep = 70]
					 \oplax^{p+q+r}
					\arrow[r]
					\arrow[d]
						& \oplax^{p+q}(-, \oplax^r, -)
						\arrow[d] \\
					\oplax^p(-, \oplax^{q+r}, -)
					\arrow[r]
						&\oplax^p (-, \oplax^q(-, \oplax^r, -), -)
				\end{tikzcd}
			\]
			commutes whenever it makes sense.
	\end{description}
	A \emph{lax} monoidal structure on a category \( \Category V \)
	is the data of an oplax monoidal structure on the opposite
	category \( \Category V\Op \).
	A lax (or oplax) monoidal structure is called
	\emph{strong} whenever the structural natural transformations
	are all invertible.
\end{definition}

Lax monoidal categories were originally introduced by
Day and Street as an example of lax monoids in \( \Cat \)
\cite{zbMATH01963527}. Notice however that our presentation
differs slightly from theirs, in that they require an associator 
of the form \( \oplax^n \to \oplax^p(\oplax^{n_0},\dots,\oplax^{n_p}) \)
with \( n_0 + \dots + n_p = n - p \) and \( 0 \leq p < n \),
\ie a natural transformation which relates the \( n \)th
structure map to the composition of \( \oplax^p \) with
any collection of \( \{\oplax^{n_k}\} \) corresponding to
a partition of \( n-p \) in \( p+1 \) parts. Consequently,
the parallel and sequential decomposition conditions are
replaced by the fact that two successive decompositions using a different
sequence of partitions are identical. These two presentations
can be seen as the counterpart of the presentations of operads in terms of
full composition maps and partial composition maps, and hence
turn out to be equivalent, as explained by Ching 
\cite{doi:10.1007/s40062-012-0007-2}. 

\begin{notation}
	For \( \Category V^\oplax \) an oplax monoidal category,
	we shall let
	\[
		1_\oplax \coloneqq \oplax^{-1}, \quad
		x \otimes_\oplax y \coloneqq \oplax^1(x,y)
		\qand
		x \otimes_\oplax y \otimes_\oplax z
		\coloneqq \oplax^2(x,y,z)
	\]
	for any objects \( x,y,z \) of \( \Category V \).
	The same notations will be used whenever \( \Category V^\lax \)
	is a lax monoidal category.
	
	To increase readability when the number of variables is
	unspecified,
	we shall write
	\( \oplax(\bx) \) to designate either \( 1_\oplax = \oplax^{-1} \)
	if \( \bx \) is the empty sequence
	or \( \oplax^n(x_0, \dots, x_n) \)
	if \( \bx = (x_0, \dots, x_n) \).
\end{notation}

\begin{remark}
	Any monoidal category \( \Category V^\otimes \) can be turned
	into a strong oplax category by setting
	\( \oplax^{-1} = 1_\otimes \),
	\( \oplax^0 = \IdentityFunctor_{\Category V} \)
	and
	\[
		\oplax^n(x_0, \dots, x_n)
		\coloneqq
		((((x_0 \otimes x_1) \otimes \dots)\dots) \otimes x_n)
	\]
	for \( n \geq 0 \).
	This is not a trivial result and we shall give more details
	about this in a dedicated section
	\SectionRef{Sec: comparison of enrichment theories}.
\end{remark}

\begin{example}
	\label{example comonad coproduct}
	By definition, \( \oplax^0 \) satisfies the axioms of a comonad.
	Conversely, if \( W \) is a comonad
	on a category with finite coproducts,
	one can define an oplax monoidal structure by setting
	\[
		\textstyle
		\oplax(\{x_i\}_{i \in I}) \coloneqq
		\coprod_{i \in I} W(x_i)
	\]
	for \( I \) finite.
	This construction is dual to the one given by Batanin and Weber
	where a lax monoidal structure was built from a monad
	\cite[2.3]{doi:10.1007/s10485-008-9179-7}.
\end{example}

\subsection{Lax functors}
\label{sec:lax_functors}

We give two definitions of lax functors, depending on
whether the source category is endowed with a lax or an oplax
structure.

\begin{definition}[(Lax functor with oplax source)]
	Let \( F \From \Category U^\psi \to \Category V^\oplax \)
	be a functor between two oplax monoidal categories.
	A lax monoidal structure on \( F \) consists in
	natural transformations
	\[
		\begin{tikzcd}
			\oplax^n \circ
			(\underbrace{F \times \cdots \times F}_{
				n+1 \text{ times}})
			\rar["l^n"]
				& F \circ \psi^n
		\end{tikzcd}
	\]
	which we shall abbreviate as
	\[
		\oplax^n F \longrightarrow F \psi^n
	\]
	for every \( n \geq -1 \).
	It is required that
	\begin{description}
			\item[counit]
				\[
					\begin{tikzcd}[row sep = large]
						\oplax^0 F
						\ar[dr]
						\dar["l^0" swap]
						& \\
						F \psi^0 \rar & F
					\end{tikzcd}
				\]
				commute;
			\item[decomposition]
				\[
					\begin{tikzcd}[row sep = large]
						\oplax^{p+q}F
						\rar
						\ar[dd, "l^{p+q}" swap]
							&
							\oplax^p(-,\oplax^q,-) F
							\dar["{\oplax^p(-, l^q, -)}"]
							\\
								& \oplax^p F (-, \psi^q, -)
								\dar["{l^p(-, \psi^q, -)}"]
							\\
							F \psi^{p+q}
							\rar
								& F \psi^p(-, \psi^q, -)
					\end{tikzcd}
				\]

				commute whenever it makes sense.
	\end{description}
\end{definition}

\begin{definition}[(Lax functor with lax source)]
	\label{def:lax_to_oplax_functor}
	Let \( F \From \Category U^\lax \to \Category V^\oplax \)
	be a functor between a lax monoidal category and an
	oplax monoidal category.
	A lax monoidal structure on \( F \) is the data of
	natural transformations
	\[
		\begin{tikzcd}
			\oplax^n \circ
			(\underbrace{F \times \cdots \times F}_{
				n+1 \text{ times}})
			\rar["l^n"]
				& F \circ \lax^n
		\end{tikzcd}
	\]
	which we shall abbreviate as
	\[
		\oplax^n F \longrightarrow F \lax^n
	\]
	for every \( n \geq -1 \).
	It is required that
	\begin{description}
			\item[Unitality]
				\[
					\begin{tikzcd}
						\oplax^0 F
						\ar[dd,"{l^0}",swap]
						\ar[dr]
						& \\
						& F \ar[dl]
						\\
						F \lax^0 &
					\end{tikzcd}
				\]
				commute;
			\item[Additivity]
				\[
					\begin{tikzcd}[column sep = huge]
						\oplax^{p+q} F \ar[dr]
						\ar[dddd,"l^{p+q}" swap]
						&
						\\
							       &
							       \oplax^p(-, \oplax^q, -)F
							       \dar["{\oplax^p(-, l^q, -)}"]
							       \\
							       &
							       \oplax^p F (-, \lax^q, -)
							       \dar["{l^p(-, \lambda^q, -)}"]
							       \\
							       &
							       F \lax^p(-, \lax^q, -)
							       \ar[ld]
							       \\
							       F \lax^{p+q}
					\end{tikzcd}
				\]
				commute whenever it makes sense.
	\end{description}
\end{definition}

The additivity rule just described makes it easy to
guess that the higher maps \( l^n \) for \( n > 1 \) as well as
\( l^0 \) can be reconstructed from \( l^1 \) and \( l^{-1} \).
The proof of this result proceeds via careful inductions.

\begin{proposition}[(Reconstruction of lax monoidal functors)]
	\label{prop:truncation_lax_functors}
	Let
	\( F \From \Category U^\lax \to \Category V^\oplax \)
	be a functor between a lax monoidal category and an
	oplax monoidal category.
	Let
	\[
		\begin{tikzcd}
			F(-) \otimes_\oplax F(-)
			\rar["l^1"]
				& F(- \otimes_\lax -)
		\end{tikzcd}
	\]
	be a natural transformation and let
	\[
		\begin{tikzcd}
			1_\oplax
			\rar["l^{-1}"]
				& F(1_\lax)
		\end{tikzcd}
	\]
	be a morphism.
	Moreover assume that
	\begin{description}
	\item[Associativity]
		the associativity diagram
	\[
		\begin{tikzcd}[column sep=0.05ex, row sep=large]
			& F(-) \otimes_\oplax F(-) \otimes_\oplax F(-)
			\ar[dl]
			\ar[dr] & \\
			(F(-) \otimes_\oplax F(-)) \otimes_\oplax F(-)
			\ar[d, "{l^1 \otimes_\oplax -}"]
			&& F(-) \otimes_\oplax (F(-) \otimes_\oplax F(-))
			\ar[d, "{- \otimes_\oplax l^1}"] \\
			F(- \otimes_\lax -) \otimes_\oplax F(-)
			\ar[d, "l^1"]
			&& F(-) \otimes_\oplax F(- \otimes_\lax -)
			\ar[d, "l^1"] \\
			F((- \otimes_\lax -) \otimes_\lax -)
			\ar[dr]
			&& F(- \otimes_\lax (- \otimes_\lax -))
			\ar[dl] \\
			& F(- \otimes_\lax - \otimes_\lax -)
		\end{tikzcd}
	\]
	commutes;
	\item[Unitality]
		the two unitality diagrams
	\[
		\begin{tikzcd}
			& \oplax^0 F(-)
			\ar[dl]
			\ar[dd]
			\\
			1_\oplax \otimes_\oplax F(-)
			\ar[d, "{l^{-1} \otimes_\oplax -}" swap]
			&
			\\
			F(1_\lax) \otimes_\oplax F(-)
			\ar[d, "l^1" swap] & F(-) \ar[dd]
			\\
			F(1_\lax \otimes_\lax -)
			\ar[dr]
			&
			\\
			& F \lax^0(-)
		\end{tikzcd}
		\qand
		\begin{tikzcd}
			\oplax^0 F(-)
			\ar[dd]
			\ar[dr] &
			\\
			& F(-) \otimes_\oplax 1_\oplax
			\ar[d, "{- \otimes_\oplax l^{-1}}"]
			\\
			F(-) \ar[dd] & F(-) \otimes_\oplax F(1_\lax)
			\ar[d, "l^1"]
			\\
			& F(- \otimes_\lax 1_\lax)
			\ar[dl]
			\\
			F \lax^0(-) &
		\end{tikzcd}
	\]
	commute,
	\end{description}
	then the pair \( (l^{-1}, l^1) \) can be extended,
	in a unique way,
	into a sequence of natural transformations
	\[
		\begin{tikzcd}
			\oplax^n F
			\rar["l^n"]
				& F \lax^n,
		\end{tikzcd}
		\quad n \geq -1
	\]
	endowing \( F \From \Category U^\lax \to \Category V^\oplax \)
	with a structure of lax monoidal functor.
\end{proposition}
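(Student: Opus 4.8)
The plan is to read the additivity axiom of Definition~\ref{def:lax_to_oplax_functor} as a recursion that forces the higher components~$l^{n}$, check that this pins them down uniquely, and then verify the full axiom system by induction on the arity, the base of the induction being exactly the associativity and unitality hypotheses. Since $\oplax^{0}$ is a comonad (Example~\ref{example comonad coproduct}) and, dually, $\lax^{0}$ a monad, with counit $i \From \oplax^{0} \to \IdentityFunctor$ and unit $\eta \From \IdentityFunctor \to \lax^{0}$, I would set
\[
	l^{0} \coloneqq \bigl( \oplax^{0} F \xrightarrow{\; i F \;} F \xrightarrow{\; F \eta \;} F \lax^{0} \bigr) ,
\]
the only choice compatible with the unitality triangle of Definition~\ref{def:lax_to_oplax_functor}, and for $n \geq 2$ define $l^{n}$ recursively by \emph{peeling off the first object},
\[
	l^{n} \coloneqq \Bigl( \oplax^{n} F \xrightarrow{\; a^{1,n-1}_{1} \;} \oplax^{1}\bigl(F, \oplax^{n-1}F\bigr) \xrightarrow{\; \oplax^{1}(F, l^{n-1}) \;} \oplax^{1}\bigl(F, F\lax^{n-1}\bigr) \xrightarrow{\; l^{1} \;} F\lax^{1}(-, \lax^{n-1}) \xrightarrow{\; F(a^{1,n-1}_{1}) \;} F\lax^{n} \Bigr) ,
\]
the last arrow being $F$ applied to the associator of $\Category U^{\lax}$ (which for a lax structure runs $\lax^{1}(-, \lax^{n-1}) \to \lax^{n}$). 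This recursion bottoms out at the given $l^{1}$; note that its instance $n = 2$ makes $l^{2}$ the right-hand branch of the associativity hexagon.

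Uniqueness of the extension is then immediate: the unitality triangle determines $l^{0}$, and the instance $(p,q,i) = (1, n-1, 1)$ of the additivity pentagon of Definition~\ref{def:lax_to_oplax_functor} forces exactly the displayed formula for each $l^{n}$, $n \geq 2$. So the whole content of the proposition is that this family satisfies the axioms of a lax monoidal functor. The unitality triangle holds by construction, hence everything reduces to the additivity pentagon
\[
	P(p, q, i) \colon \quad l^{p+q} = F(a^{\lax}) \circ l^{p}(-, \lax^{q}, -) \circ \oplax^{p}(-, l^{q}, -) \circ a^{\oplax} , \qquad p \geq 0,\ q \geq -1,\ i + j = p ,
\]
where $a^{\oplax}$ is the associator of $\Category V^{\oplax}$ and $a^{\lax}$ that of $\Category U^{\lax}$. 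I would prove all instances of $P$ by strong induction on $n = p + q$. The base cases are precisely the hypotheses: $P(1, -1, 0)$ and $P(1, -1, 1)$ are the two unitality diagrams; $P(1, 1, 1)$ is the definition of $l^{2}$ and $P(1, 1, 0)$ then follows from the associativity hexagon; and the remaining cases of arity $\leq 2$ (those with $q \in \{-1, 0\}$, or $p = 0$) reduce to the counit/unit laws of the comonad $\oplax^{0}$ and the monad $\lax^{0}$ together with the naturality of $l^{1}$ and $l^{-1}$.

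For the inductive step one fixes $n \geq 3$ and an arbitrary decomposition $(p, q, i)$ with $p + q = n$, and must reconcile it with the single decomposition that defines $l^{n}$. The mechanism is to factor $a^{\oplax} \colon \oplax^{n} \to \oplax^{p}(-, \oplax^{q}, -)$ through the elementary ``peel the first object'' associator $\oplax^{n} \to \oplax^{1}(\IdentityFunctor, \oplax^{n-1})$ and then further elementary decompositions inside the $\oplax^{n-1}$-block, using the \emph{parallel} decomposition axiom of $\Category V^{\oplax}$ to slide the $\oplax^{q}$-insertion past the initial peeling when $i \neq 0$, and the \emph{sequential} decomposition axiom to nest the remaining splittings; the matching parallel and sequential axioms of $\Category U^{\lax}$ govern the $\lax$-associators in lockstep, and naturality of $l^{1}$, $l^{q}$, $l^{n-1}$ moves the various $l$'s past one another. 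Every face of the resulting diagram is then either an instance of $P$ of strictly smaller arity (induction hypothesis), the defining formula of some $l^{m}$ with $m < n$, a naturality square, or one of the coherence axioms of $\oplax$ or $\lax$, so pasting them yields $P(p, q, i)$.

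The main obstacle is this last step. The recursion singles out one decomposition of $\oplax^{n}$, so $P(p, q, i)$ for a general $(p, q, i)$ is genuinely a coherence statement, and the work lies in orchestrating the parallel and sequential decomposition axioms of \emph{both} $\Category V^{\oplax}$ and $\Category U^{\lax}$, together with naturality and the induction hypothesis, so that all faces match up; the associativity and unitality hypotheses of the proposition are consumed precisely by the arity-$\leq 2$ blocks that surface during the factorisation. Everything else --- the definitions, uniqueness, the unitality triangle, the index bookkeeping --- is routine, which is why the argument is best described, as announced before the statement, as a sequence of careful inductions.
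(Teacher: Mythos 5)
Your proposal is correct and follows essentially the same route as the paper: define the higher components by a single elementary additivity instance (you peel off the first tensor factor, the paper peels off the last --- a mirror-image choice), note that uniqueness is forced, and then verify every additivity diagram by induction, with the parallel and sequential decomposition axioms of \( \Category U^\lax \) and \( \Category V^\oplax \), naturality, and the associativity hexagon and unitality triangles supplying the faces, exactly as in the paper's explicit diagrams. One bookkeeping caveat: your claim that every face is an instance of \(P\) of strictly smaller arity is not quite literal --- for \(p \geq 2\) with the block inserted on the non-definitional side (\(i=0\) in your convention) the pasting also uses the same-arity row-one instance \(P(1,n-1,0)\), which itself consumes the associativity hexagon at every arity; so the induction must settle the \(p=1\) row at each arity before the higher rows (equivalently, the paper's nested induction, first on \(q\) for \(p=1\), then on \(p\)) --- a reordering of your argument rather than a gap.
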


\begin{proof}
	Let us index the additivity diagrams in the definition of a
	lax functor by \( A(p, q, i) \), where \(  0 \leq i \leq p \)
	is the position of \( \oplax^q \) inserted in \( \oplax^p \).

	Define \( l^0 \) as the composition
	\[
		\oplax^0 F \longrightarrow F \longrightarrow
		F \lax^0
	\]
	so that the unitality diagram for lax functors commute by
	definition, and
	define \( l^{q+1} \) by induction: for any \( q \geq 1 \),
	let \( l^{q+1} \) be such that
	\( A(1,q,0) \) commutes.

	Furthermore, let \( \Induction(p,q) \) denote the hypothesis
	that \( A(p,q,i) \) hold for every
	position \( 0 \leq i \leq p \).
	We shall show that \( \Induction(p, q) \) is true for every
	\( p \geq 0 \) and every \( q \geq -1 \).

	Let us start by the case of \( \Induction(0, q) \) with
	\( q \geq -1 \).
	The following two squares
	\[
		\begin{tikzcd}[ampersand replacement=\&]
			\oplax^q F
			\arrow[d, "l^q", swap]
			\& \oplax^0 \oplax^q F
			\lar
			\arrow[d, "\oplax^0(l^q)"] \\
			F \lax^q
			\& \oplax^0 F \lax^q
			\lar
		\end{tikzcd}
		\quad
		\begin{tikzcd}[ampersand replacement=\&]
			\oplax^q F
			\arrow[d, "l^q", swap]
			\rar
			\& \oplax^0 \oplax^q F
			\arrow[d, "\oplax^0(l^q)"] \\
			F \lax^q
			\& \oplax^0 F \lax^q
			\lar
		\end{tikzcd}
	\]
	commute.
	The first by functoriality of \( \oplax^0 \); the commutativity
	of the second one follows from the commutativity of the first
	one, in addition to the unitality axiom for \( \oplax \).
	As a consequence, the top part of the diagram
	\[
		\begin{tikzcd}
			\oplax^q F
			\ar[ddddd, "l^q" swap]
			\ar[dr]
			\\
			& \oplax^0 \oplax^q F
			\dar["\oplax^0 l^q"]
			\\
			& \oplax^0 F \lax^q
			\ar[dddl]
			\ar[d]
			\\
			& F \lax^q
			\dar
			\\
			& F \lax^0 \lax^q
			\ar[dl]
			\\
			F \lax^q
		\end{tikzcd}
	\]
	commutes and the bottom part commutes by unitality of
	\( \lax \).

	We shall now address \( \Induction(1, q) \)
	for every \( q \geq -1 \). The hypothesis
	\( \Induction(1,-1) \) is satisfied thanks to the two unitality
	assumptions.
	For \( \Induction(1,0) \), the two diagrams
	\[
		\begin{tikzcd}[column sep = small, row sep = large]
			F(-) \otimes_\oplax F(-)
			\dar["l^1" swap]
			\rar
				& F \lax^0(-) \otimes_\oplax F(-)
				\dar["{l^1\left(\lax^0(-),-\right)}"]
				\\
			F(- \otimes_\lax -) \rar
			     & F\left(\lax^0(-) \otimes_\lax -\right)
		\end{tikzcd}
		\quad
		\begin{tikzcd}[column sep = small, row sep = large]
			F(-) \otimes_\oplax F(-)
			\dar["l^1" swap]
			\rar
				& F \lax^0(-) \otimes_\oplax F(-)
				\dar["{l^1\left(\lax^0(-),-\right)}"]
			\\
			F(- \otimes_\lax -)
			     & F\left(\lax^0(-) \otimes_\lax -\right)
			     \lar
		\end{tikzcd}
	\]
	commute: the first one by functoriality of \( - \otimes_\lambda - \);
	the second one because of the commutativity of the first one
	coupled with the unitality of \( \lambda \).
	As a consequence, the bottom part of the diagram
	\[
		\begin{tikzcd}[column sep = huge]
			F(-) \otimes_\oplax F(-)
			\ar[ddddd, "l^1" swap]
			\ar[dr]
			\ar[dddr]
			\\
				& \oplax^0 F(-) \otimes_\oplax F(-)
				\dar
			\\
				& F(-) \otimes_\oplax F(-)
				\dar
			\\
				& F \lax^0(-) \otimes_\oplax F(-)
				\dar["{l^1\left(\lax^0(-),-\right)}"]
			\\
				& F(\lax^0(-) \otimes_\lax -)
				\ar[dl]
			\\
			F(- \otimes_\lax -)
		\end{tikzcd}
	\]
	commutes and the top part commutes by unitality of \( \oplax \).
	This shows that half of \( \Induction(1,0) \) holds, the other
	half can be obtained symmetrically.

	The hypothesis \( \Induction(1,1) \)
	is true by the associativity axiom, we can now show that
	\( \Induction(1,q) \) is true by induction on \( q \geq 1 \).
	The diagram
	\[
		\begin{tikzcd}[column sep = large, row sep = large]
			\oplax^{q+2} F
			\dar
			\ar[ddr]
			\ar[rrd]
			\\
			\oplax^{q+1}F \otimes_\oplax F
			\ar[dd]
				&
					& F \otimes_\oplax
					\oplax^{q+1} F
					\ar[dd]
			\\
				& (- \otimes_\oplax \oplax^q
				\otimes_\oplax -) F
				\ar[dl] \ar[dr]
				\ar[dd, "{(- \otimes_\oplax l^q
				\otimes_\oplax -)}"]
			\\
			(F \otimes_\oplax \oplax^q F) \otimes_\oplax F
			\ar[dd, "{((- \otimes_\oplax
			l^q) \otimes_\oplax -)}" swap]
				& & F \otimes_\oplax
				(\oplax^q F \otimes_\oplax F)
				\ar[dd, "{(- \otimes_\oplax
				(l^q \otimes_\oplax -))}"]
			\\
				& (F \otimes_\oplax F \lax^q
				\otimes_\oplax F)
				\ar[dr] \ar[dl]
			\\
			(F \otimes_\oplax F \lax^q) \otimes_\oplax F
			\dar["{(l^1 \otimes_\oplax -)}" swap]
				& & F
				\otimes_\oplax
				(F \lax^q \otimes_\oplax F)
				\dar["(- \otimes_\oplax l^1)"]
			\\
			F(- \otimes_\lax \lax^q) \otimes_\oplax F
			\dar["l^1" swap]
				&& F \otimes_\oplax
				(F(\lax^q \otimes_\lax -))
				\dar["l^1"]
			\\
			F((- \otimes_\lax \lax^q) \otimes_\lax -)
			\ar[dd]
			\ar[dr]
				&& F(- \otimes_\lax (\lax^q \otimes_\lax
				-))
				\ar[dl]
				\ar[dd]
			\\
				& F(- \otimes_\lax\lax^q \otimes_\lax -)
				\ar[ddl]
			\\
			F\left(\lax^{q+1} \otimes_\lax -\right)
			\dar
				&& F\left(-
				\otimes_\lax \lax^{q+1}\right)
				\ar[dll]
			\\
			F \lax^{q+2}
		\end{tikzcd}
	\]
	commutes.
	Indeed, the top two squares commute by sequential decomposition;
	the middle two squares by functoriality;
	the octagon commutes by the associativity axiom and
	the bottom two squares commute by sequential composition.
	Using \( \Induction(1,q) \), one can see that the left full
	composite map equals \( l^{q+2} \) and thus, the commutativity
	of this diagram gives us the commutativity
	of \( A(1,q+1,1) \),
	since \( A(1,q+1, 0) \) commutes by definition, we have shown
	\( \Induction(1,q) \implies \Induction(1,q+1) \),
	and hence \( \Induction(1,q) \) holds for any integer
	\( q \geq -1 \).

	Finally, assume that \( \Induction(p,q) \) holds.
	Then the diagram
	\[
		\begin{tikzcd}[row sep=huge]
			\oplax^{p+q+1} F
			\dar
			\ar[drr]
			\\
			F \otimes_\oplax \oplax^{p+q} F
			\arrow[dddd, "{- \otimes_\oplax l^{p+q}}" swap]
			\ar[dr]
			&& \oplax^{p+1}(-,\oplax^q,-) F
			\ar[dl]
			\arrow[d, "{\oplax^{p+1}(-,l^q,-)}"]
			\\
			& F \otimes_\oplax (\oplax^p(-,\oplax^q,-) F)
			\ar[d, "{- \otimes_\oplax (\oplax^p(-,l^q,-))}" swap]
			& \oplax^{p+1} F (-,\lax^q,-)
			\arrow[dddd, "l^{p+1}"]
			\ar[dl]
			\\
			& F \otimes_\oplax (\oplax^p F (-, \lax^q, -))
			\arrow[d, "{- \otimes_\oplax l^p}" swap] &
			\\
				 & F \otimes_\oplax (F \lax^p(-,\lax^q,-))
			\ar[dl]
			\arrow[d, "l^1" swap]
			\\
			F \otimes_\oplax F \lax^{p+q}
			\arrow[d, "l^1" swap]
				 & F(- \otimes_\lax
					 (\lax^p(-,\lax^q,-)))
			\ar[dr]
			\ar[dl] &\\
			F \left(- \otimes_\lax \lax^{p+q}\right)
			\ar[d]
			&& F \lax^{p+1}(-,\lax^q,-)
			\ar[dll] \\
			F \lax^{p+q+1} &
		\end{tikzcd}
	\]
	commutes whenever it makes sense.
	The top square by sequential decomposition; the left pentagon
	by \( \Induction(p,q) \); the right pentagon by definition of
	\( l^{p+1} \); the left and the right squares by functoriality,
	and the bottom square by sequential composition.
	Using \( \Induction(1, p+q) \), one can see that the left
	total composite map equals \( l^{p+q+1} \).
	This shows that
	\( \Induction(p,q) \) implies the commutativity of
	\( A(p+1,q,i) \) for every \( 0 < i \leq p+1 \).
	The mirror diagram involving \( l^{p+q} \otimes_\oplax - \)
	instead of \( - \otimes_\oplax l^{p+q} \) shows that
	\( A(p+1, q, i) \) commutes for every \( 0 \leq i < p+1 \).
	Thus we have shown
	that \( \Induction(p, q) \implies \Induction(p+1, q) \)
	which concludes the proof of the proposition.
\end{proof}

\begin{remark}
	Note that if both categories in
	the previous proposition are strong monoidal,
	then we also recover the usual notion of 
	a lax monoidal functor between (strong) monoidal categories.

	Furthermore,
	out of the four possible combinations of lax functors between
	(op)lax categories, lax functors with lax source and oplax target
	are privileged as they are the only ones for which
	the additivity diagram can be used to define the higher maps
	\( l^{p+q} \) in terms of lower ones, as necessary for
	the reconstruction result to hold. 
\end{remark}

\subsection{Oplax functors}
\label{sec:oplax_functors}

The previous section provided two
(non-exhaustive) definitions of lax functors.
Using opposite categories,
we can readily define two corresponding types of oplax functors:
oplax functors with lax source
and lax target as well as oplax functors
with oplax source and lax target.
Since none of these involve an oplax
functor with oplax target,
we add to the above taxonomy the following definition.

\begin{definition}
	Let \( \Category U^\psi \) and \( \Category V^\oplax \)
	be two oplax monoidal categories.
	An oplax structure on a functor
	\( F \From \Category U \to \Category V \)
	consists in natural transformations
	\[
		\begin{tikzcd}
			F \circ \psi^n
			\rar["l^n"]
				& \oplax^n \circ
				(\underbrace{F \times \cdots \times F}_{
				n+1 \text{ times}})
		\end{tikzcd}
	\]
	which we shall abbreviate as
	\[
		F \psi^n \longrightarrow \oplax^n F
	\]
	for every \( n \geq -1 \).
	It is required that
	\begin{description}
			\item[counit]
				\[
					\begin{tikzcd}[row sep = large]
						F \psi^0
						\dar["l^0" swap]
						\ar[dr]
							\\
						\oplax^0 F
						\rar
							& F
					\end{tikzcd}
				\]
				commute;
			\item[decomposition]
				\[
					\begin{tikzcd}[row sep = large]
						F \psi^{p+q}
						\rar
						\ar[dd, "l^{p+q}" swap]
							& F \psi^p(-, \psi^q,-)
							\dar["{l^p(-, \psi^q,-)}"]
							\\
							& \oplax^p F (-, \psi^q,-)
							\dar["{\oplax^p(-, l^q,-)}"]
							\\
						\oplax^{p+q} F
						\rar
							& \oplax^p (-, \oplax^q, -) F
					\end{tikzcd}
				\]
				commute whenever it makes sense.
	\end{description}
\end{definition}

\begin{remark}
	Note that oplax monoidal structures on functors,
	from an oplax monoidal category to a lax monoidal
	one, enjoy a reconstruction theorem similar to 
	that of lax monoidal functors with lax monoidal
	source and oplax monoidal target
	\UnskipRef{prop:truncation_lax_functors}.
	This is simply a consequence of the fact that, for an oplax
	monoidal structure on a functor
	\( F \From \Category U^\oplax \to \Category V^\lax \),
	with \( \Category U^\oplax \) oplax monoidal and
	\( \Category V^\lax \) lax monoidal, the decomposition
	condition in the previous definition can be used 
	to define the natural transformations
	\( l^n \From F\,\oplax^n \to \lax^n\,F \)
	for \( n \geq 2 \) in terms of the pair \( (l^1, l^{-1}) \),
	while the counit condition define \( l^0 \) as
	the composition of the unit and counit of \( \lax \)
	and \( \oplax \).
\end{remark}

\subsection{Monoidal natural transformations}
\label{sec:monoidal_nat_transfo}

\begin{definition}[(Monoidal natural transformation)]
	Let \( \Category U^\psi \)
	and \( \Category V^\oplax \) be two oplax monoidal
	categories.
	Let \( F, G \From \Category U \to \Category V \) be two
	lax functors with respective lax monoidal structure
	\( \{k^n\}_{n \geq -1} \) and \( \{l^n\}_{n \geq -1} \).
	A natural transformation \( \alpha \From F \implies G \)
	is said to be monoidal if the diagrams
	\[
		\begin{tikzcd}[sep=large]
			\oplax^n\, F
			\arrow[r, "k^n"]
			\arrow[d, "{\oplax^n(\alpha,\dots,\alpha)}"{left}]
			& F \psi^n
			\arrow[d, "\alpha_{\psi^n}"] \\
			\oplax^n\, G
			\arrow[r, "l^n"]
			& G \psi^n
		\end{tikzcd}
	\]
	commute for any \( n \geq -1 \).
	One can define monoidal transformations between oplax
	monoidal functors in a similar way.
\end{definition}

For monoidal natural transformations between lax monoidal functors
that can be truncated, \ie whose source and target categories
are lax and oplax monoidal respectively, the infinite collection
of conditions in the previous definition is actually redundant,
as illustrated by the following lemma.

\begin{lemma}
	\label{lemma:truncation_monoidality}
	Let \( F,G \From \Category U^\lax \to \Category V^\oplax \)
	be two lax monoidal functors from a lax monoidal category
	to an oplax monoidal one, with respective lax monoidal
	structures \( \{k^n\} \) and \( \{l^n\}\). A natural transformation
	\( \alpha \From F \to G \) between such functors is monoidal
	if and only if the following diagrams
	\[
		\begin{tikzcd}[sep=large]
			(F \otimes_\oplax F)
			\ar[r, "\alpha \otimes_\oplax \alpha"]
			\ar[d, "k^1"] 
			& (G \otimes_\oplax G)
			\ar[d, "l^1"] \\
			F(- \otimes_\lax -)
			\ar[r, "\alpha_{- \otimes_\lax -}"]
			& G(- \otimes_\lax -)
		\end{tikzcd}
		\qand
		\begin{tikzcd}[sep=large]
			1_\oplax
			\ar[d, "k^{-1}"]
			\ar[dr, "l^{-1}"] & \\
			F(1_\lax)
			\ar[r, "\alpha_{1_\lax}"]
			& G(1_\lax)
		\end{tikzcd}
	\]
	commute.
\end{lemma}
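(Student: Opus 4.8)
The forward implication is trivial: if $\alpha$ is monoidal in the sense of the definition, then in particular the diagrams for $n = 1$ and $n = -1$ commute, which are precisely the two diagrams in the statement. For the converse, the plan is to leverage the reconstruction result for lax monoidal functors \UnskipRef{prop:truncation_lax_functors}, which tells us that the entire collections $\{k^n\}$ and $\{l^n\}$ are determined by the truncated data $(k^{-1}, k^1)$ and $(l^{-1}, l^1)$ via the additivity diagrams $A(p,q,i)$. So it suffices to run an induction on $n$ establishing that the $n$-th monoidality square commutes, using only the $n=1$ and $n=-1$ cases as base and the additivity diagrams of both $F$ and $G$ as the inductive engine. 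Concretely, I would mimic the inductive scheme of the proof of \UnskipRef{prop:truncation_lax_functors}: first handle $n = 0$ (where $k^0, l^0$ are, by construction, the composites $\oplax^0 F \to F \to F\lax^0$ and likewise for $G$, so the square splits into a naturality square for $\alpha$ against $\oplax^0 \Rightarrow \IdentityFunctor$ and one against $\IdentityFunctor \Rightarrow \lax^0$, both of which commute automatically); then build up $n \geq 2$ inductively.

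\textbf{The inductive step.} Suppose the monoidality square commutes for all arities $< n$, with $n \geq 2$. Write $n = p + q + 1$ with $p = 1$, $q = n - 2 \geq 0$ (the case $q \geq 0$ suffices to generate everything, paralleling how \UnskipRef{prop:truncation_lax_functors} reconstructs $l^{q+1}$ from $A(1,q,0)$). The key is the commuting square $A(1, q, 0)$ for $F$, which expresses $k^{n}$ as the composite
\[
	\oplax^{n} F
	\longrightarrow
	(\oplax^1(-, \oplax^{q}, -)) F
	\xrightarrow{\ \oplax^1(-,\, k^{q},\, -)\ }
	\oplax^1 F(-, \lax^{q}, -)
	\xrightarrow{\ k^1(-,\, \lax^{q},\, -)\ }
	F\lax^1(-, \lax^{q}, -)
	\longrightarrow
	F\lax^{n},
\]
and the analogous composite for $G$ with $l$'s. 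I would then paste together: the naturality square of $\alpha$ against the associator $\oplax^{n} \Rightarrow \oplax^1(-,\oplax^{q},-)$; the $q$-arity monoidality square (inductive hypothesis), suitably whiskered by $\oplax^1$ and evaluated in the appropriate slot; the $1$-arity monoidality square (base case), evaluated with $\lax^{q}(-)$ in one argument; and the naturality square of $\alpha$ against the associator $\lax^1(-,\lax^{q},-) \Rightarrow \lax^{n}$ on the target side. Chasing around the outer boundary of this pasting yields exactly the $n$-arity monoidality square for $\alpha$. One subtlety worth flagging: one must check that the whiskered square $\oplax^1(-, [\text{$q$-monoidality square for }\alpha], -)$ really does commute — but this is immediate since applying the bifunctor $\oplax^1$ to a commuting square of natural transformations (in one variable, with $\alpha$ held fixed in the other) produces a commuting square, because $\oplax^1(\alpha, \alpha) = \oplax^1(\alpha, -) \circ \oplax^1(-, \alpha)$ decomposes the vertical maps compatibly.

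\textbf{Main obstacle.} The genuine difficulty is purely bookkeeping: making sure the three associativity/decomposition coherence axioms (counitality, parallel and sequential decomposition) are invoked correctly so that the two composites representing $k^n$ and $l^n$ line up with the pasted diagram, and that the base cases $n \in \{-1, 0, 1\}$ are all discharged before the induction begins. There is no conceptual hurdle — the statement is really a corollary of \UnskipRef{prop:truncation_lax_functors} in disguise — but one must be careful that the reconstruction recursions for $k$ and for $\ell$ are run ``in parallel'' and that the position index $i$ in $A(p,q,i)$ is handled symmetrically (the mirror-image pasting, using $A(p,q,p)$ rather than $A(p,q,0)$, or equivalently the second of the two unitality diagrams, covers the remaining insertion positions, exactly as in the proof of \UnskipRef{prop:truncation_lax_functors}). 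I would present the $n=0$ and $n=2$ cases in some detail and then indicate that the general inductive step is the evident elaboration, rather than drawing the full pasting diagram for arbitrary $n$.
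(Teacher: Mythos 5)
Your proposal is correct and follows essentially the same route as the paper: check $M(0)$ from the unitality axioms and naturality of the co/unit, then induct on $n$ by pasting the naturality squares of the associators, the additivity decompositions of $k^n$ and $l^n$ through the binary map, the whiskered inductive hypothesis, and the arity-one base case. Only a cosmetic remark: with the paper's indexing the relevant decomposition is $\oplax^{n} \to \oplax^{1}(-,\oplax^{n-1},-)$ (so $q=n-1$, not $n-2$), and no ``mirror'' pasting over insertion positions is needed since $M(n)$ is a single square for each $n$.
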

\begin{proof}
	Let us denote by \( M (n) \), with \( n \geq -1 \),
	the diagrams encoding the conditions that \( \alpha \)
	is monoidal. The diagrams \( M(-1) \) and \( M(1) \)
	correspond to those required in the above lemma.
	The diagram \( M(0) \) can be written as
	\[
		\begin{tikzcd}[sep=large]
			\oplax^0\,F
			\ar[dd, "k^0"]
			\ar[dr]
			\ar[rrr, "\oplax^0(\alpha)"]
			&&& \oplax^0\,G
			\ar[dl]
			\ar[dd, "l^0"] \\
			& F
			\ar[dl]
			\ar[r, "\alpha"]
			& G 
			\ar[dr] & \\
			F\,\lax^0
			\ar[rrr, "\alpha_{\lax^0}"]
			&&& G\,\lax^0
		\end{tikzcd}
	\]
	and hence commutes as a consequence of the unitality
	conditions of the lax monoidal structures \( k \)
	and \( l \), and of the naturality of the co/unit
	of \( \oplax \) and \( \lax \).

	Now assume that \( M(k) \) commutes for all integers
	\( k < n \). Then the diagram 
	\[
		\begin{tikzcd}[sep=huge]
			\oplax^n\,F
			\ar[rrr, "{\oplax^n(\alpha,\dots,\alpha)}"]
			\ar[dddd, "k^n"]
			\ar[dr]
			&&& \oplax^n\,G 
			\ar[dl]
			\ar[dddd, "l^n"] \\
			& {(- \otimes_\oplax \oplax^{n-1})}\,F
			\ar[d, "{- \otimes_\oplax k^{n-1}}"]
			\ar[r, "{\alpha \otimes_\oplax \oplax^{n-1}(\alpha,\dots,\alpha)}"]
			& {(- \otimes_\oplax \oplax^{n-1})}\,G
			\ar[d, "{- \otimes_\oplax l^{n-1}}"] & \\
			& (F \otimes_\oplax F\,\lax^{n-1})
			\ar[d, "k^1"]
			\ar[r, "{\alpha \otimes_\oplax \alpha_{\lax^{n-1}}}"]
			& (G \otimes_\oplax G\,\lax^{n-1})
			\ar[d, "l^1"] & \\
			& F\,{(- \otimes_\lax \lax^{n-1})}
			\ar[dl]
			\ar[r, "{\alpha \otimes_\lax \lax^{n-1}(\alpha,\dots,\alpha)}"]
			& G\,{(- \otimes_\lax \lax^{n-1})}
			\ar[dr] & \\
			F\,\lax^n
			\ar[rrr, "\alpha_{\lax^n}"]
			&&& G\,\lax^n
		\end{tikzcd}
	\]
	commutes: the top and bottom (exterior) squares by
	naturality of the associators of \( \oplax \) and
	\( \lax \), the left and right (exterior) squares
	by additivity of the lax monoidal structures
	\( k \) and \( l \) and the interior two squares
	by assumption. This proves that \( M(n) \) also
	commutes and hence proves the lemma by recursion.
\end{proof}

\begin{remark}
	Note that the same result holds for monoidal
	natural transformations between two oplax monoidal
	functors, whose source and target categories are
	respectively oplax and lax monoidal. In other words,
	the monoidality condition on natural transformations
	between op/lax monoidal functors can be truncated
	whenever the op/lax monoidal structure of these functors
	can be.
\end{remark}
\begin{remark}
	When both \( \lax \) and \( \oplax \) are strong
	monoidal, the previous lemma reproduces the usual
	definition of a monoidal natural transformation
	between monoidal functors.
\end{remark}

\subsection{Monoids in oplax monoidal categories}
\label{sec:monoids}
\begin{definition}[(Category of monoids)]
	A monoid in an oplax monoidal category \( \Category V^\oplax \)
	is a lax monoidal functor from the punctual category
	\( * \to \Category V^\oplax \), and a morphism between
	two monoids is a monoidal transformations between 
	the corresponding functors. These data define a category,
	that will be denoted \( \mathsf{Mon}(\Category V^\oplax) \).

	The category of comonoids in \( \Category V^\oplax \)
	is the category of oplax functors
	\( \ast \to \Category V^\oplax \) and natural transformations
	between them.
\end{definition}

In more details, the functor \( * \to \Category V^\oplax \)
singles out an object \( A \) in the target category 
\( \Category V \), while the lax monoidal structure
on this functor consists in a collection of morphisms
\[
	\begin{tikzcd}[sep=large]
		\oplax^n(A,\dots,A)
		\ar[r, "m_A^n"]
		& A
	\end{tikzcd}
\]
of \( \Category V \), such that \( m_A^0 \) identifies with
the counit \( \oplax^0 \to \IdentityFunctor_{\Category V} \),
and which obey the additivity conditions
\[
	\begin{tikzcd}[column sep=normal, row sep=large]
		\oplax^{p+q}\,A
		\ar[dr]
		\ar[ddd, "m_A^{p+q}"] & \\
		& {\oplax^p(-,\oplax^q,-)}\,A
		\ar[d, "{\oplax^p(-,m_A^q,-)}"] \\
		& \oplax^p\,A
		\ar[dl, "m_A^p"] \\
		A &
	\end{tikzcd}
\]
for all integers \( p,q \geq -1 \).
A morphism between
two monoids \( (A, \{m_A^n\}) \) and \( (B, \{m_B^n\}) \)
is a morphism \( f \From A \to B \) such that
\[
	\begin{tikzcd}[column sep = huge]
		\oplax^n\,A
		\ar[r, "{\oplax^n(f,\dots,f)}"]
		\ar[d, "m_A^n"]
		& \oplax^n\,B
		\ar[d, "m_B^n"] \\
		A 
		\ar[r, "f"]
		& B
	\end{tikzcd}
\]
commutes for all integers \( n \geq -1 \).

The punctual category is (trivially) strong monoidal, 
and hence we can apply the reconstruction theorem for
lax monoidal functors \UnskipRef{prop:truncation_lax_functors}
to obtain the following result \cite[Th. 3.5]{doi:10.1090/conm/771}.

\begin{corollary}[(Truncation of monoids)]
	A structure of monoid on an object \( A \)
	in an oplax monoidal category \( \Category V^\oplax \)
	can be uniquely reconstructed from a pair of
	morphisms
	\[
		\mu_A \From A \otimes_\oplax A \to A\,,
		\qand 
		\eta_A \From 1_\oplax \to A\,,
	\]
	such that the associativity and unitality diagrams
	\[
		\hspace{-15pt}
		\begin{tikzcd}[column sep=tiny]
			&[-5ex] A \otimes_\oplax A \otimes_\oplax A
			\ar[dl]
			\ar[dr] &[-5ex] \\
			(A \otimes_\oplax A) \otimes_\oplax A
			\ar[d, "{\mu_A \otimes_\oplax -}" swap]
			&& A \otimes_\oplax (A \otimes_\oplax A)
			\ar[d, "{- \otimes_\oplax \mu_A}"] \\
			A \otimes_\oplax A
			\ar[dr, "\mu_A" swap]
			&& A \otimes_\oplax A
			\ar[dl, "\mu_A"] \\
			& A &
		\end{tikzcd}
		\qquad
		\begin{tikzcd}[column sep=small]
			&[-2ex] \oplax^0(A)
			\ar[dl]
			\ar[ddd]
			\ar[dr] &[-2ex] \\
			1_\oplax \otimes_\oplax A
			\ar[d, "{\eta_A \otimes_\oplax -}" swap]
			&& A \otimes_\oplax 1_\oplax
			\ar[d, "{- \otimes_\oplax \eta_A}"] \\
			A \otimes_\oplax A
			\ar[dr, swap, "\mu_A"]
			&& A \otimes_\oplax A
			\ar[dl, "\mu_A"] \\
			& A &
		\end{tikzcd}
	\]
	commute.
\end{corollary}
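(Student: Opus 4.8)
The plan is to deduce the statement directly from the reconstruction theorem for lax monoidal functors \UnskipRef{prop:truncation_lax_functors}, taking the punctual category as source. First I would unwind the definitions: a monoid in \( \Category V^\oplax \) is, by definition, a lax monoidal functor \( F \From \ast \to \Category V^\oplax \). Such an \( F \) amounts to a choice of object \( A = F(\ast) \in \Category V \); since \( \ast \) carries the trivial strong monoidal structure with \( \lax^n(\ast, \dots, \ast) = \ast \) for all \( n \), one has \( F \lax^n = A \) for every \( n \), so a lax monoidal structure on \( F \) is exactly a family of morphisms \( m_A^n \From \oplax^n(A, \dots, A) \to A \) --- the naturality conditions being vacuous, as \( \ast \) has only the identity morphism --- and the lax functor axioms specialise to the additivity conditions displayed above, with \( m_A^0 \) forced to be the counit \( \oplax^0 \to \IdentityFunctor_{\Category V} \) at \( A \). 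This is the content of the ``In more details'' paragraph, and it identifies \( \mathsf{Mon}(\Category V^\oplax) \) with the category of such lax monoidal functors.

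Since \( \ast \) is strong monoidal, \UnskipRef{prop:truncation_lax_functors} applies to \( F \). Its input datum \( (l^{-1}, l^1) \) becomes precisely the pair \( (\eta_A, \mu_A) \), and the remaining point is to verify that the hypotheses of that proposition specialise to the associativity and unitality diagrams in the statement. This is where all the (very mild) work sits: because every structure morphism of the trivial monoidal structure on \( \ast \) is an identity, \( F \) sends every \( \lax \)-associator and \( \lax \)-unitor to an identity, so each occurrence of \( F\lax^p(\dots) \) collapses to \( A \). Under this collapse the associativity diagram of the proposition becomes exactly the associativity diagram of the corollary, while the two unitality squares of the proposition become exactly the left-hand and right-hand branches of the single unitality diagram of the corollary. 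I expect this bookkeeping --- matching the degenerate diagrams --- to be the only step requiring care; there is no genuine obstacle.

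Finally I would invoke the conclusion of \UnskipRef{prop:truncation_lax_functors}: the pair \( (\eta_A, \mu_A) \), subject to those diagrams, extends in a unique way to a sequence \( \{m_A^n\}_{n \geq -1} \) making \( F \) lax monoidal, \ie to a unique monoid structure on \( A \) in the sense of \S\ref{sec:monoids}. This is the assertion of the corollary; alternatively one could appeal to \cite[Th.~3.5]{doi:10.1090/conm/771}.
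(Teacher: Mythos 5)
Your proposal is correct and follows exactly the paper's own route: the paper likewise observes that the punctual category is (trivially) strong monoidal and applies the reconstruction theorem for lax monoidal functors \UnskipRef{prop:truncation_lax_functors}, with the pair \( (l^{-1}, l^1) \) becoming \( (\eta_A, \mu_A) \). The only difference is that you spell out the bookkeeping of how the proposition's diagrams collapse to those of the corollary, which the paper leaves implicit.
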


\begin{remark}
	When \( \Category V^\otimes \) is monoidal,
	one recovers the usual pentagonal axiom (on the left diagram)
	and the two triangular axioms (on the right diagram)
	of the usual definition of a monoid, independently of a choice
	of lift of the monoidal structure into a strong monoidal structure.

	When \( \Category V^\otimes \) is strictly normal,
	one recovers a theorem from Ching
	\cite[3.4]{doi:10.1007/s40062-012-0007-2}.
\end{remark}

\begin{remark}[(Comonoids in oplax monoidal categories)]
	Contrarily to the case of monoids, a structure
	of comonoid cannot be truncated to lower arity
	maps, as oplax monoidal functors with an oplax
	monoidal category as their target are not
	subject to the reconstruction theorem discussed
	previously.
	This can also be seen from the above
	diagram: it is not possible to use the latter
	as a definition of the higher arity maps in terms
	of the lower arity ones.
\end{remark}

\begin{remark}[(Comonoids in lax monoidal categories)]
	One can also define monoids and comonoids
	in a lax monoidal category \( \Category V^\lax \)
	as lax and oplax monoidal functors respectively,
	from the punctual category.
	In this case, the situation
	is reversed: comonoid structures can be truncated,
	while monoid structures cannot.
	This is simply due
	to the fact that lax monoidal structures on functors
	with lax monoidal target categories cannot be truncated.
\end{remark}

\begin{lemma}[(\( 1_\oplax\) is a canonical comonoid)]
	\label{lemma: 1 is a comonoid}
	Let \( \Category V^\oplax \) be an oplax monoidal category.
	Let us define by induction a map
	\[
		\begin{tikzcd}[column sep = huge]
			1_\oplax
			\rar["w^n"]
				& \oplax^n 1_\oplax \coloneqq
				\oplax^n(1_\oplax,\dots, 1_\oplax)
		\end{tikzcd}
	\]
	for every \( n \geq -1 \)
	by first letting \( w^{-1} \From 1_\oplax \to 1_\oplax \)
	be the identity of \( 1_\oplax \).
	Then,
	assuming \( w^n \) to be defined for \( n \geq -1 \),
	let \( w^{n+1} \) be defined by the composition
	\[
		\begin{tikzcd}[column sep = huge]
			1_\oplax \rar["{w^n}"]
				& \oplax^n 1_\oplax
				\rar["{a^{n+1,-1}_0} 1_\oplax"]
						& \oplax^{n+1} 1_\oplax
		\end{tikzcd}
	\]
	so that:
	\begin{itemize}
		\item
			for every \( n \geq 0 \) and every \( 0 \leq i \leq n \),
			one has \( w^{n+1} = (a^{n+1,-1}_i 1_\oplax) \circ w^n \);
		\item
			the sequence of maps \( \{w^n\}_{n \geq -1} \) makes
			\( 1_\oplax \) a comonoid.
	\end{itemize}
\end{lemma}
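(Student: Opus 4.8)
The plan is to unwind the definition of a comonoid and then check the two clauses in turn. A comonoid in $\Category V^\oplax$ is an oplax monoidal functor out of the punctual category $\ast$; taking the functor that picks out $1_\oplax$ and using that $\ast$ is (trivially) strong monoidal with all of its structure maps identities, the oplax functor axioms collapse to exactly two families of requirements: the \emph{counit equation} $i_{1_\oplax}\circ w^0=\IdentityMorphism_{1_\oplax}$, where $i\colon\oplax^0\to\IdentityFunctor$ is the counit of $\oplax$, and, for all $p\ge 0$, $q\ge -1$ and $i+j=p$, the \emph{decomposition equation}
\[
	\oplax^p(\IdentityMorphism,\dots,w^q,\dots,\IdentityMorphism)\circ w^p=(a^{p,q}_i\,1_\oplax)\circ w^{p+q},
\]
with $w^q$ sitting in slot $i$. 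The counit equation is immediate: $w^0=a^{0,-1}_0\,1_\oplax$ by construction, and the first of the two counitality diagrams of $\Category V^\oplax$, read at $p=-1$, asserts precisely that $i_{1_\oplax}\circ a^{0,-1}_0\,1_\oplax=\IdentityMorphism$.

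For the first clause I would prove the slightly stronger statement allowing all $0\le i\le n+1$ — this extra freedom costs nothing and is what the decomposition equations will need — by induction on $n$. Unravelling the recursion, $w^{n+1}$ is a composite of $n+2$ successive associators of the form $a^{\bullet,-1}_{\bullet}\,1_\oplax$, each inserting one copy of $\oplax^{-1}=1_\oplax$; since $\oplax^{-1}$ has no inputs, any two of these insertions are mutually non-nested, so the parallel-decomposition axiom relates the two orders in which they can be performed and thereby lets one relocate the inserted copies. A connectedness argument on the slot indices — of the flavour underlying Mac Lane's coherence theorem — then shows that the resulting map $1_\oplax\to\oplax^{n+1}1_\oplax$ does not depend on the slots chosen, hence in particular not on the position $i$ of the final insertion; the inductive hypothesis enters only to rewrite the penultimate insertion at a slot for which the parallel-decomposition square applies verbatim.

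The decomposition equations I would then prove by induction on $q\ge -1$. When $q=-1$ the factor $w^{-1}$ is the identity, so the equation reduces to $(a^{p,-1}_i\,1_\oplax)\circ w^{p-1}=w^p$, which is exactly the first clause. For $q\ge 0$ I apply the sequential-decomposition axiom with its innermost variable specialised to $-1$ and the specialised copy of $\oplax^{-1}$ placed so as to fall inside the $\oplax^q$-factor; this produces a commuting square from $\oplax^{p+q-1}$ to $\oplax^p(-,\oplax^q(-,\oplax^{-1},-),-)$ whose two legs are $a^{p,q}_i\circ a^{p+q,-1}_{\bullet}$ and $\oplax^p(-,a^{q,-1}_{\bullet},-)\circ a^{p,q-1}_i$ — in effect a factorisation of the general associator $a^{p,q}_i$ through the lower one $a^{p,q-1}_i$. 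Precomposing the square with $w^{p+q-1}$: the first leg becomes $(a^{p,q}_i\,1_\oplax)\circ w^{p+q}$, since $(a^{p+q,-1}_{\bullet}\,1_\oplax)\circ w^{p+q-1}=w^{p+q}$ by the first clause; the second leg becomes $\oplax^p(\IdentityMorphism,\dots,w^q,\dots,\IdentityMorphism)\circ w^p$, since $(a^{p,q-1}_i\,1_\oplax)\circ w^{p+q-1}=\oplax^p(\dots,w^{q-1},\dots)\circ w^p$ by the inductive hypothesis and then $\oplax^p(-,a^{q,-1}_{\bullet},-)\circ\oplax^p(\dots,w^{q-1},\dots)=\oplax^p(\dots,(a^{q,-1}_{\bullet}\,1_\oplax)\circ w^{q-1},\dots)=\oplax^p(\dots,w^q,\dots)$ by the first clause again. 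Commutativity of the square is therefore the decomposition equation for $(p,q,i)$, which completes the induction.

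The conceptual content is small — each of the three axioms of an oplax monoidal category is used essentially once — so I expect the only genuine obstacle to be the slot bookkeeping: verifying that the relevant instances of the parallel- and sequential-decomposition squares, once their free slots are populated by copies of $\oplax^{-1}$, really do assemble into exactly the equations displayed above. Keeping track of which slot of which $\oplax^m$ each inserted copy of $1_\oplax$ occupies is what would make the argument lengthy rather than difficult.
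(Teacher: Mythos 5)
Your proposal is correct and follows essentially the same route as the paper: canonicity of the maps \( w^n \) (independence of the insertion slot) via the parallel decomposition axiom, the counit equation from the counitality axiom of \( \oplax \), and the general decomposition equations \( \oplax^p(-,w^q,-)\circ w^p=(a^{p,q}_i\,1_\oplax)\circ w^{p+q} \) by induction on the inner arity, with base case given by the first clause and inductive step obtained by pasting the inductive hypothesis with the sequential decomposition square specialised at \( \oplax^{-1} \). The only (harmless) deviation is that you prove the first clause for all insertion positions \( 0\le i\le n+1 \), which is indeed what the inductive step uses.
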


\begin{proof}
	The definition of the maps \( w^n \) can be shown to be canonical by induction
	using the parallel decomposition axiom for oplax monoidal categories.

	The counit axiom for \( 1_\oplax \) follows directly from the
	counit axiom for \( \oplax \).
	Let us index the decomposition diagrams for
	oplax functors by \( D(p,q,i) \), where \( 0 \leq i \leq p \)
	is the insertion index for \( \oplax^q \) inside \( \oplax^p \).
	Let \( \Induction(p,q) \) denote the hypothesis that
	\( D(p,q,i) \) commute for every \( 0 \leq i \leq p \).

	By construction \( \Induction(p, -1) \) holds for every
	\( p \geq 0 \).
	Furthermore, the diagram
	\[
		\begin{tikzcd}[row sep = large]
			1_\oplax
			\dar["w^{p+q}" swap]
			\rar["w^p"]
				& \oplax^p 1_\oplax
				\dar["{\oplax^p(-,w_q,-)}"]
				\\
			\oplax^{p+q} 1_\oplax
			\rar
			\dar
				& \oplax^p(-, \oplax^q,-) 1_\oplax
				\dar
				\\
			\oplax^{p+q+1} 1_\oplax
			\rar
				& \oplax^p(-, \oplax^{q+1}, -) 1_\oplax
		\end{tikzcd}
	\]
	commutes by virtue of \( \Induction(p,q)\) for the top square
	and by sequential decomposition for the  bottom square.
	In other terms, we showed
	\( \Induction(p,q) \implies \Induction(p, q+1) \) for
	every \( p \geq 0, q \geq -1 \)
	hence \( (1_\oplax, \{w^n\}_{n \geq -1}) \) is a comonoid.
\end{proof}

\begin{remark}
	Notice that the two parallel structure maps
	\( \oplax^0(1_\oplax)
	\rightrightarrows 1_\oplax \otimes_\oplax 1_\oplax \)
	may not be equal, but the two induced maps
	\( 1_\oplax \to 1_\oplax \otimes_\oplax 1_\oplax \)
	are equal.
\end{remark}

\subsection{Normal oplax monoidal categories}
\label{sec:normality}

\begin{definition}
	An oplax monoidal category \( \Category V^\oplax \) will be
	called \emph{normal} whenever the structure map
	\[
		\oplax^0(x) \longrightarrow x
	\]
	is invertible for every object \( x \in \Objects(\Category V) \).

	We shall say that \( \oplax \) is \emph{strictly normal}
	whenever \( \oplax^0(x) = x \),
	\[
		x = \oplax^0(x) \longrightarrow x
	\]
	is the identity of \( x \)
	for every \( x \in \Objects(\Category V) \) and the decomposition
	natural transformations
	\[
		\oplax^p \longrightarrow \oplax^0 \oplax^p
		\qand
		\oplax^p \longrightarrow \oplax^p(-, \oplax^0, -)
	\]
	are also identity natural transformations.
\end{definition}

\begin{remark}
	The terminology `normal' originates from
	Day and Street
	\cite{zbMATH01963527}.
	It is also used by Ching
	\cite{doi:10.1007/s40062-012-0007-2} to describe what
	we have called `strictly normal' structures.

	Notice in particular
	that the set of axioms defining strictly normal
	oplax monoidal categories is simpler: there is no need to
	specify \( \oplax^0 \) and the counitality axioms can be removed.
\end{remark}

\begin{remark}[(Strictification)]
	The distinction between `normal' and `strictly normal' is
	almost irrelevant since one can always strictify
	a normal structure.

	Every oplax monoidal structure \( \oplax \) on a
	category \( \Category V \) has an obvious underlying
	strictly normal oplax monoidal structure
	\( \Underlying \oplax \) obtained by replacing \( \oplax^0 \)
	with the identity functor and every decomposition natural
	transformation involving \( \oplax^0 \) with an identity.

	In addition, the identity functor of \( \Category V \)
	has an obvious structure of oplax monoidal functor
	\[
		\oplax \implies \Underlying \oplax
	\]
	which becomes an isomorphism of oplax monoidal structures on
	\( \Category V \) whenever \( \oplax \) is normal.
\end{remark}

\subsection{Comonad twist of an oplax monoidal category}
\label{sec:comonad_twist}

We shall expand the example of oplax monoidal category
given earlier using a comonad \( W \) and the coproduct
\UnskipRef{example comonad coproduct}.
That example is a special case of a general construction
where one can twist an oplax monoidal structure with
a lax monoidal comonad, which in turn provides a plethora
of examples of \emph{non-normal} oplax monoidal structures.

\begin{definition}
	Let \( \Category V^\oplax \) be an oplax monoidal category.
	A lax monoidal comonad on \( \Category V^\oplax \) is the
	data of a comonad
	\( (W \From \Category V \to \Category V, w \From W \implies W^2,
	t \From W \implies \IdentityFunctor) \) on \( \Category V \)
	together with a
	lax monoidal structure \( \{l^n\}_{n \geq -1} \)
	on \( W \From \Category V \to \Category V \) such that
	\( w \) and \( t \) are monoidal natural transformations.
\end{definition}

\begin{example}
	If \( \Category V \) admits finite coproducts, then any comonad
	admits a canonical structure of lax monoidal comonad for the
	coproduct
	strong monoidal structure \( \Category V^\amalg \).
\end{example}

\begin{proposition}
	Given a lax monoidal comonad \( (W, w, t, \{l^n\}_{n \geq -1}) \)
	on an oplax monoidal category \( \Category V^\oplax \), one
	can obtain a new oplax monoidal structure on \( \Category V \)
	by setting
	\[
		\psi^n \coloneqq \oplax^n W
	\]
	for every \( n \geq -1 \).
	The counit natural transformation
	\( \psi^0 \implies \IdentityFunctor \) is given
	as the composite
	\[
		\begin{tikzcd}
			\oplax^0 W
			\rar
				& W
				\rar["t"]
					& \IdentityFunctor
		\end{tikzcd}
	\]
	and the decompositions are given by
	\[
		\begin{tikzcd}[column sep = huge]
			\oplax^{p+q}W
			\ar[dr]
			\\
				&[-20ex] \oplax^p(-, \oplax^q, -) W
				\rar["{\oplax^p(-, \oplax^q w,-)}"]
				& \oplax^p (-, \oplax^q W, -) W
				\rar["{\oplax^p(-,l^q,-)W}"]
				& \oplax^p W (-, \oplax^q W, -).
		\end{tikzcd}
	\]
\end{proposition}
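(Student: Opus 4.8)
The plan is to verify the axioms of Definition~\ref{Definition: oplax monoidal category} one at a time. Functoriality of each $\psi^n = \oplax^n \circ (W \times \dots \times W)$ is immediate, and the counit $\psi^0 \implies \IdentityFunctor$ together with the decomposition transformations $\psi^{p+q} \implies \psi^p(-, \psi^q, -)$ are natural transformations because each is by construction a pasting composite of natural transformations: the associators and counit of $\oplax$, the comonad maps $w$ and $t$, and the lax structure maps $\{l^n\}$ of $W$, all whiskered with suitable powers of $W$. The one preliminary point is that the abbreviations unfold correctly, \ie that $\psi^p(-, \psi^q, -)$ is literally $\oplax^p W(-, \oplax^q W, -)$, so that the displayed composite really does land in $\psi^p(-, \psi^q, -)$. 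All the content is then in the four axioms.

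For the two counitality squares the recipe is: unfold the $\psi$-structure maps; use naturality of the counit $\oplax^0 \implies \IdentityFunctor$ of $\oplax$ to bring that counit to the front; apply the matching counitality axiom of $\oplax$ to cancel ``associator then counit'' against the identity; and conclude with the counit laws of the comonad $(W, w, t)$. In the ``outer'' square ($\psi^p \to \psi^0 \psi^p \to \psi^p$) the residual composite is built from $\oplax^p(w, \dots, w)$, the structure map $l^p$ and $t$; here the monoidality of $t$ rewrites $t \circ l^p$ as $\oplax^p(t, \dots, t)$ and the comonad identity $(tW) \circ w = \IdentityMorphism$ finishes it. In the ``inner'' square ($\psi^p \to \psi^p(-, \psi^0, -) \to \psi^p$) one invokes instead the counit axiom of the lax monoidal functor $W$ (relating $l^0$ to the counit of $\oplax$) and the other comonad identity $(Wt) \circ w = \IdentityMorphism$.

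Parallel decomposition is analogous but combinatorially lighter: after unfolding, the two inserted blocks $\psi^q$ and $\psi^r$ sit in disjoint slots of $\psi^p$, so the two ``comultiply, then apply $l^\bullet$'' steps act on disjoint arguments and commute by functoriality, while the underlying $\oplax$-associators are recombined by the parallel decomposition axiom of $\oplax$.

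Sequential decomposition is where the real work lies. Here $\psi^r$ is nested inside $\psi^q$ inside $\psi^p$, and each route around the square unfolds into a long zig-zag of $\oplax$-associators, copies of the comultiplication $w$ applied in various blocks, and copies of the structure maps $l^\bullet$. Reconciling the two zig-zags requires, jointly: the sequential and parallel decomposition axioms of $\oplax$ to rebracket the $\oplax$-associators; the coassociativity of $w$, to reconcile one comultiplication applied at an outer stage against two comultiplications applied at inner stages; the decomposition (additivity) axiom of the lax monoidal functor $W$, which rewrites $l^{q+r}$ through $l^q$, $l^r$ and $\oplax$-associators; and the monoidality of $w$, which is precisely what permits sliding a comultiplication applied ``late'' past a structure map $l^\bullet$ applied ``early''. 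I expect the main obstacle to be purely organisational: one should first fix a description of the common codomain $\psi^p(-, \psi^q(-, \psi^r, -), -)$ as an explicit nested composite of $\oplax^p$, $\oplax^q$, $\oplax^r$ and copies of $W$, observe that every intermediate term of either zig-zag is a sub-block of it decorated with transiently inserted copies of $W$ (created by comultiplications and removed again by the maps $l^\bullet$), and then check that the two routes agree term by term. Once the indexing is pinned down, each local cell commutes by one of the axioms just listed or by naturality.
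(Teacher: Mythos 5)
Your proposal is correct and takes essentially the same route as the paper: a direct axiom-by-axiom verification that \( \psi^n = \oplax^n W \) satisfies counitality, parallel and sequential decomposition, each reduced to the comonad laws, the monoidality of \( t \) and \( w \), the lax structure of \( W \) and the corresponding axioms of \( \oplax \). If anything you are more precise than the paper's three-sentence proof, correctly noting that the sequential case also invokes the decomposition axiom of the lax structure \( \{l^n\} \), whereas the parallel case needs only naturality and the disjointness of the two inserted blocks.
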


\begin{proof}
	The two counitality axioms follow straightforwardly
	from the counitality axioms of the
	comonad, the counit axioms of \( \oplax \),
	the fact that \( t \) is a monoidal natural
	transformation and the naturality of the counit natural 
	transformation associated with \( \oplax \).

	Parallel decomposition comes from parallel decomposition
	for \( \oplax \) and the fact that \( w \) is a monoidal
	natural transformation.

	Sequential decomposition
	follows from sequential decomposition for \( \oplax \), the coassociativity of
	\( W \) and the fact that \( w \)
	is a monoidal natural transformation.
\end{proof}

\subsection{Oplax monoidal categories as multicategories}
\label{sec:multi}

Recall that usually the words `multicategory' and
(set valued, coloured) `operads' are synonymous.
In the present context, it will be useful to introduce a slightly more general definition
of a multicategory so as to be able to compare it with
non-normal oplax monoidal categories.

\begin{definition}[(Multicategory)]
\label{definition:Multicategory}
	A multicategory is the data of a category \( \Category M \),
	a coloured operad \( \Category M_0 \) and a morphism
	of coloured operads
	\( \Category M \to \Category M_0 \) (where \( \Category M \)
	is considered as a coloured operad with only operations of arity
	\( 1 \)), which is the identity on objects.

	Given two objects \( a, b \in \Objects(\Category M) \),
	the set \( \Category M \inout a b \) will be called the set
	of \emph{strict morphisms} between \( a \) and \( b \) while
	the set \( \Category M_0 \inout a b \) will be referred to as the
	set of \emph{weak morphisms} between
	\( a \) and \( b \).

	A multicategory will be called \emph{normal} whenever
	\( \Category M \inout a b = \Category M_0 \inout a b \)
	for all objects \( a, b \in \Objects(\Category M) \),
	in which case \( \Category M \) is simply the underlying
	category of the operad \( \Category M_0 \).

	One can form a 2-category \( \Multi \) of multicategories in
	an obvious way where 1-morphisms are given by
	compatible pairs of operad morphisms \( (F \From \Category M \to
	\Category N, F_0 \From \Category M_0 \to \Category N_0) \)
	and 2-morphisms are given by
	compatible pairs
	of transformations
	\( (\alpha \From F \implies G, \alpha_0 \From F_0 \implies G_0) \).
\end{definition}

\begin{remark}
	The above definition of multicategories \UnskipRef{definition:Multicategory}
	corresponds to the
	definition of `lax promonoidal categories' of Day and
	Street \cite[6.3]{zbMATH01963527}.
\end{remark}

One of the first, and perhaps simplest, example
of multicategory derives from a strict
monoidal category. Similarly, any oplax monoidal category
induces a multicategory:
given an oplax monoidal category \( \Category V^\oplax \),
the operadic structure is given by
\[
		\Category V_0 \inout \ba b \coloneqq
		\Category V \inout {\oplax(\ba)} b
\]
for every object \( b \in \Objects(\Category V) \)
and every finite sequence \( \ba \) of elements of
\( \Category V \).
The composition maps are given by
\[
	\begin{tikzcd}[sep=large]
		\Category V\inout{\oplax^p(-,x,-)}{-}
		\times \Category V\inout{\oplax^q(-)}{x}
		\rar
		& \Category V\inout{\oplax^p(-,\oplax^q,-)}{-}
		\rar
		& \Category V\inout{\oplax^{p+q}(-)}{-}
	\end{tikzcd}
\]
where the first arrow is simply the composition of morphisms
in \( \Category V \) and the second one is defined using
the associator of \( \Category V^\oplax \).
Given two objects \( a, b \in \Objects(\Category V) \),
the maps
\[
	\Category V \inout a b \longrightarrow
	\Category V_0 \inout a b \coloneqq \Category V
	\inout {\oplax^0(a)} b
\]
are obtained using the counit \( \oplax^0(a) \to a \).

\begin{remark}
	It is straightforward to check that the assignment
	above
	extends to a fully faithful \( 2 \)\=/functor
	\[
		\begin{tikzcd}
			\Oplax
			\rar[hook]
			& \Multi
		\end{tikzcd}
	\]
	thereby generalising the normal case, described by Aguiar, Hiam and
	López Franco
	\cite[1.7]{doi:10.1007/s10485-017-9497-8}.

	Notice in particular that,
	an oplax monoidal category is normal if
	and only if the multicategory it generates is normal.
\end{remark}

\begin{remark}
	In the case where a multicategory is represented by
	an oplax monoidal category \( \Category V^\oplax \),
	the map sending strong morphisms with source
	\( \oplax(\ba) \) and target \( b \)
	to weak morphisms with source \( \oplax(\ba) \)
	and target \( b \)
	\[
		\begin{tikzcd}
		\Category V \inout{\oplax^p(\ba)}{b}
		\rar["\IsIsomorphicTo"]
			&
			\Category V \inout{\oplax^0\oplax^p(\ba)}{b}
		\end{tikzcd}
	\]
	is a bijection.
\end{remark}

\section{Lax-oplax duoidal categories}
\label{sec:lax-oplax_duoidal}

We review a generalisation of the notion of 
duoidal categories to the oplax setting, originally
introduced by B\"ohm and Vercruysse \cite{doi:10.1090/conm/771}. 

\subsection{Definition and notations}

\begin{notation}
\label{notation:natural transformation tau of Cat}
	We shall denote by \( \CanonicalTwist^{p,q} \)
	the permutation natural
	transformation stemming from the symmetric monoidal structure
	of \( \Cat \) and whose components are given by
	\begin{align*}
		\CanonicalTwist^{p,q}
		& \From (\Category C_{0,0} \times \dots \times
		\Category C_{0,q}) \times \dots \times (\Category C_{p,0}
		\times \dots \times \Category C_{p,q}) \\
		& \qquad \qquad \qquad
		\longrightarrow \quad (\Category C_{0,0} \times \dots
		\times \Category C_{p,0}) \times \dots \times (\Category C_{0,q}
		\times \dots \times \Category C_{p,q})
	\end{align*}
	where \( \Category C_{i,j} \) are categories
	for \( 0 \leq i \leq p \) and \( 0 \leq j \leq q \).
	In plain words, \( \CanonicalTwist^{p,q} \)
	is the functor that sends the
	product of \( p+1 \) products of \( q+1 \) categories,
	to the product of \( q+1 \) products
	of \( p+1 \) categories.
	One has
	\[
		\CanonicalTwist^{1,1}(x_0, x_1, x_2, x_3)
		= (x_0, x_2, x_1, x_3)
	\]
	for example.
\end{notation}

\begin{definition}[(Lax-oplax duoidal category)]
	\label{def:lax-oplax}
	A lax-oplax duoidal structure on a category \( \Category D \)
	is the data of a lax monoidal structure
	\( \Category D^\lax \) and an oplax monoidal structure
	\( \Category D^\oplax \) together with natural transformations
	\[
		\begin{tikzcd}[column sep = huge]
			\oplax^p \circ (\underbrace{\lax^q
			\times \dots \times \lax^q}_{p+1\, \text{times}})
			\rar["\chi^{p,q}"]
				&
				\lax^q \circ (\underbrace{\oplax^p \times
				\dots \times \oplax^p}_{q+1\, \text{times}}) \circ
				\CanonicalTwist^{p,q}
		\end{tikzcd}
	\]
	for every \( p, q \geq -1 \),
	such that
	\begin{itemize}
		\item
			the transformations \( \chi^{p, q} \) endow
			\[
				\begin{tikzcd}
					(\underbrace{\Category D \times \dots \times
					\Category D}_{q+1 \text{ times}})^\oplax
					\rar["\lax^q"]
						& \Category D^\oplax
				\end{tikzcd}
			\]
			with a lax monoidal structure, for every
			\( q \geq -1 \);
		\item
			the transformations
			\( \chi^{p,q} \CanonicalTwist^{p,q} \)
			endow
			\[
				\begin{tikzcd}
					(\underbrace{\Category D \times \dots \times
					\Category D}_{p+1 \text{ times}})^\lax
					\rar["\oplax^p"]
						& \Category D^\lax
				\end{tikzcd}
			\]
			with an oplax monoidal structure
			for every \( p \geq -1 \).
	\end{itemize}
\end{definition}

\begin{remark}
	For the case \( (p,q) = (-1,-1) \), we get a morphism
	\[
		1_\oplax \longrightarrow 1_\lax
	\]
	and for the case \( (p,q) = (1,1) \), we get morphisms
	\[
		(a \otimes_\lax b)
		\otimes_\oplax (c \otimes_\lax d)
		\longrightarrow
		(a \otimes_\oplax c)
		\otimes_\lax (b \otimes_\oplax d)
	\]
	for every tuple \( (a, b, c, d) \)
	of objects of \( \Category D \).
	When both \( \lax \) and \( \oplax \) are strong, these 
	correspond to the structure maps of a (usual) duoidal structure
	on \( \Category D \) \cite{doi:10.1090/crmm/029}.
\end{remark}

\begin{remark}
	For the case \( p=-1 \), we obtain structure maps
	\[
		1_\oplax
		\longrightarrow \lax^q(1_\oplax, \dots, 1_\oplax)
	\]
	turning \( 1_\oplax \) into a \( \lax \)-comonoid.

	Symmetrically, for \( q=-1 \), we get structure maps
	\[
		\oplax^p(1_\lax, \dots, 1_\lax)
		\longrightarrow 1_\lax
	\]
	turning \( 1_\lax \) into an \( \oplax \)\=/monoid.
\end{remark}

\subsection{Bimonoids in lax-oplax duoidal categories}


\begin{proposition}[{\cite[Th. 4.4]{doi:10.1090/conm/771}}]
	\label{prop:monoids=lax+comonoid=oplax}
	Let \( \Category D^{\lax, \oplax} \) be
	a lax-oplax duoidal category.
	Then \( \lax \) induces a lax monoidal structure on
	the category
	of monoids \( \mathsf{Mon}(\Category D^\oplax) \)
	with respect to the oplax monoidal structure \( \oplax \).
	Similarly, \( \oplax \) induces an oplax monoidal structure
	on the category of comonoids
	\( \mathsf{Comon}(\Category D^\lax) \) with respect to
	the lax monoidal structure \( \lax \).
\end{proposition}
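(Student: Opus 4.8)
The plan is to obtain the statement as an instance of the fact that taking monoids is a finite-product-preserving \(2\)-functor, applied to the observation that a lax-oplax duoidal category is precisely a lax monoid in the \(2\)-category \(\Oplax\). First I would record that \(\Category V^\oplax\mapsto\mathsf{Mon}(\Category V^\oplax)\) underlies a \(2\)-functor \(\mathsf{Mon}\From\Oplax\to\Cat\): a lax monoidal functor with oplax source composes with another such (the composite carrying the evident lax structure), hence carries monoids --- which are lax monoidal functors out of the punctual category --- to monoids; and monoidal natural transformations behave well under whiskering, so \(\mathsf{Mon}\) extends to \(2\)-cells. Moreover \(\mathsf{Mon}\) preserves finite products, since a monoid in a finite product \((\Category D^{\times(n+1)})^\oplax\) of oplax monoidal categories is exactly an \((n+1)\)-tuple of monoids in \(\Category D^\oplax\), \(2\)-naturally in all variables; thus \(\mathsf{Mon}\bigl((\Category D^{\times(n+1)})^\oplax\bigr)\simeq\mathsf{Mon}(\Category D^\oplax)^{\times(n+1)}\).

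Next I would unwind the definition of a lax-oplax duoidal category \UnskipRef{def:lax-oplax} so as to exhibit the data of such a structure on \(\Category D\), with \(\lax\) playing the role of the multiplication, as exactly that of a lax monoid in \((\Oplax,\times)\) on the object \(\Category D^\oplax\). Its multiplication \(1\)-cells are the functors \(\lax^n\From(\Category D^{\times(n+1)})^\oplax\to\Category D^\oplax\), and the first bullet of \UnskipRef{def:lax-oplax} says precisely that \(\{\chi^{p,n}\}_{p}\) is a lax monoidal structure on \(\lax^n\) relative to the product oplax structure, i.e.\ that each \(\lax^n\) is a \(1\)-cell of \(\Oplax\). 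Its associator and unit \(2\)-cells are those of the lax monoidal structure \(\lax\) on \(\Category D\) itself, namely \(\lax^p(-,\lax^q,-)\Rightarrow\lax^{p+q}\) and \(\IdentityFunctor\Rightarrow\lax^0\); re-reading the second bullet of \UnskipRef{def:lax-oplax} as the statement that these natural transformations are monoidal with respect to \(\oplax\) shows that they are \(2\)-cells of \(\Oplax\); and the lax-monoid axioms then hold because a \(2\)-cell of \(\Oplax\) is determined by its underlying natural transformation, so they reduce to the axioms of the lax monoidal structure \(\lax\) on \(\Category D\). Applying the product-preserving \(2\)-functor \(\mathsf{Mon}\) to this lax monoid produces a lax monoid in \((\Cat,\times)\), that is, a lax monoidal structure on \(\mathsf{Mon}(\Category D^\oplax)\), whose \(n\)-ary multiplication sends \((A_0,\dots,A_n)\) to \(\lax^n(A_0,\dots,A_n)\) endowed with the monoid structure transported along the lax monoidal functor \(\lax^n\) (for instance its binary multiplication is \(\chi^{1,n}\) followed by \(\lax^n\) applied to the binary multiplications of the \(A_i\)), with associators and unit those of \(\lax\).

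For the comonoid half I would argue by the evident vertical dual, and not by passing to opposite categories: reversing the arrows of \(\Category D\) also reverses the direction of \(\chi\) and so does not return a lax-oplax duoidal category. One records that \(\mathsf{Comon}(-)\) underlies a finite-product-preserving \(2\)-functor from the \(2\)-category of lax monoidal categories, oplax functors with lax target, and monoidal natural transformations, to \(\Cat\). The second bullet of \UnskipRef{def:lax-oplax} says exactly that each \(\oplax^p\From(\Category D^{\times(p+1)})^\lax\to\Category D^\lax\) is an oplax monoidal functor, while the dual reading of the first bullet says that the associators \(\oplax^{p+q}\Rightarrow\oplax^p(-,\oplax^q,-)\) and the counit \(\oplax^0\Rightarrow\IdentityFunctor\) are monoidal; hence \(\Category D^\lax\), with \(\oplax\) as multiplication, is an oplax monoid in that \(2\)-category, and applying \(\mathsf{Comon}\) yields the asserted oplax monoidal structure on \(\mathsf{Comon}(\Category D^\lax)\), with \(p\)-ary multiplication \((C_0,\dots,C_p)\mapsto\oplax^p(C_0,\dots,C_p)\) carrying the comonoid structure transported along \(\oplax^p\).

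The one genuinely non-formal step --- and the main obstacle --- is the identification just used: the two bullet points of \UnskipRef{def:lax-oplax} are phrased symmetrically and ``one structural axiom at a time'', whereas I need them repackaged as ``the \(\lax^n\) are \(1\)-cells of \(\Oplax\)'' together with ``the coherence constraints of \(\lax\) are \(2\)-cells of \(\Oplax\)''. Verifying that the oplax-monoidal-functor axioms of each \(\oplax^p\) --- counitality and decomposition --- coincide term by term with the monoidal-naturality conditions for the associators \(\lax^p(-,\lax^q,-)\Rightarrow\lax^{p+q}\) and for the unit of \(\lax\), while keeping faithful track of the shuffle functors \(\CanonicalTwist^{p,q}\) and of the bookkeeping of insertion positions, is where the actual work lies; everything else --- composition and whiskering of lax monoidal functors, product-preservation of \(\mathsf{Mon}\), and the reduction of the lax-monoid axioms to those of \(\lax\) via faithfulness of the relevant forgetful functors --- is routine.
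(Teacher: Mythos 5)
Your argument is correct, but note that the paper gives no proof of this proposition at all: it is imported from B\"ohm--Vercruysse \cite[Th. 4.4]{doi:10.1090/conm/771}, and the only in-house argument touching it is the remark after the theorem on \( \Cat_{\Category D} \)\UnskipRef{corollary:lax oplax duoidal}, which recovers the \emph{monoid} half as the one-object case of the statement that \( \Cat_{\Category D} \) is a lax monoidal \( 2 \)\=/category. Your route is the same conceptual one used there---identify \( \Category D^{\lax,\oplax} \) with a lax monoid in \( \Oplax \) and push it forward along a \( 2 \)\=/functor that sends lax monoids to lax monoids (Day--Street)---but with the simpler, finite-product-preserving \( 2 \)\=/functor \( \mathsf{Mon}(-) \From \Oplax \to \Cat \) in place of \( \Category V \mapsto \Cat_{\Category V} \); this makes the proof self-contained and, unlike the enriched-category theorem, also delivers the comonoid half via the vertical dual (your warning that op-duality reverses the direction of \( \chi \), so that one cannot simply pass to \( \Category D\Op \), is correct and is exactly why a separate dual argument with \( \mathsf{Comon}(-) \) on lax monoidal categories and oplax functors is needed). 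The one step you defer---repackaging the two bullets of the definition\UnskipRef{def:lax-oplax} as ``the \( \lax^q \) are \( 1 \)\=/cells of \( \Oplax \)'' together with ``the coherence cells of \( \lax \) are monoidal \( 2 \)\=/cells'', and dually---does check out: the decomposition (resp.\ counit) axiom for the lax structure \( \{\chi^{p,q}\}_{p} \) of \( \lax^q \) at \( (p,r) \) is literally the \( \lax^q \)\=/component of the monoidality square for the associator \( \oplax^{p+r} \Rightarrow \oplax^p(-,\oplax^r,-) \) (resp.\ for the counit \( \oplax^0 \Rightarrow \IdentityFunctor \)), and symmetrically with the roles of \( \lax \) and \( \oplax \) exchanged; this is precisely the identification the paper asserts without proof when it says that lax-oplax duoidal categories are the lax monoids of \( \Oplax \), so spelling it out as you propose would in fact supply a detail the paper leaves implicit.
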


We can now recall the definition of a bimonoid in a lax-oplax
duoidal category.

\begin{definition}[(Bimonoid)]
\label{definition: bimonoid}
  A bimonoid in a lax-oplax duoidal category
  \( \Category D^{\lax, \oplax} \)
  is a comonoid in the category of monoids of $\Category D^\oplax$, or
  equivalently, a monoid in the category of comonoids in $\Category D^\lax$.
\end{definition}

Very concretely, a bimonoid in
\( \Category D^{\lax, \oplax} \)
consists in an object $A$ in $\Category D$, together with morphisms
\[
    m_A^n \From \oplax^n(A,\dots,A) \to A
    \qand
    w^n_A \From A \to \lax^n(A,\dots,A)
\]
for any integer $n \geq -1$,
so that $(A, m_A)$ be a monoid in $\Category D^\oplax$ and $(A, w_A)$ be a comonoid
in $\Category D^\lax$.
On top of that, these morphisms have to be such that the diagram
\[
    \begin{tikzcd}[column sep=large]
	\oplax^p A
	\arrow[r,"\oplax^pw^q_A"] \arrow[dd,"m^p_A"] 
	& 
	\oplax^p\lax^q  A
	\arrow[d,"\chi^{p,q}"] 
	\\
        &
        \lax^q\oplax^p A\arrow[d,"\lax^q m^p_A"] 
        \\
	A \arrow[r,"w^q_A"]            
	& 
	\lax^qA          
     \end{tikzcd}
\]
commute for any integers $p,q \geq -1$. These diagrams encode
the requirement that the maps $w_A$ are monoid morphisms, or equivalently
that the maps $m_A$ are comonoid morphisms.

Thanks to the reconstruction theorem for monoids in an oplax monoidal
category, the data needed to describe a bimonoid in a lax-oplax
duoidal category is similar to the data needed to describe a bimonoid
in a usual duoidal category.

\begin{proposition}
	\label{Truncation of bimonoids}
	The data of a bimonoid \( A \)
	in a lax-oplax duoidal category \( \Category D^{\lax, \oplax} \)
	is equivalent to the data of:
	\begin{description}
	\item[Product and unit]
	a product map and a unit map
	\[
		\mu_A \From A \otimes_\oplax A \to A
		\qand
		\eta_A \From 1_\oplax \to A\,,
	\]
	such that the associativity and unitality diagrams
	\[
		\hspace{-15pt}
		\begin{tikzcd}[column sep=tiny]
			&[-5ex] A \otimes_\oplax A \otimes_\oplax A
			\ar[dl]
			\ar[dr] &[-5ex] \\
			(A \otimes_\oplax A) \otimes_\oplax A
			\ar[d, "{\mu_A \otimes_\oplax -}"]
			&& A \otimes_\oplax (A \otimes_\oplax A)
			\ar[d, "{- \otimes_\oplax \mu_A}"] \\
			A \otimes_\oplax A
			\ar[dr, swap, "\mu_A"]
			&& A \otimes_\oplax A
			\ar[dl, "\mu_A"] \\
			& A &
		\end{tikzcd}
		\qquad
		\begin{tikzcd}[column sep=small]
			&[-2ex] \oplax^0(A)
			\ar[dl]
			\ar[ddd]
			\ar[dr] &[-2ex] \\
			1_\oplax \otimes_\oplax A
			\ar[d, "{\eta_A \otimes_\oplax -}"]
			&& A \otimes_\oplax 1_\oplax
			\ar[d, "{- \otimes_\oplax \eta_A}"] \\
			A \otimes_\oplax A
			\ar[dr, swap, "\mu_A"]
			&& A \otimes_\oplax A
			\ar[dl, "\mu_A"] \\
			& A &
		\end{tikzcd}
	\]
	commute;
	\item[Coproduct and counit]
	A coproduct and a counit
	\[
		\delta_A \From A \to A \otimes_\lax A
		\qand
		\epsilon_A \From A \to 1_\lax\,,
	\]
	such that the coassociativity and counitality diagrams
	\[
		\hspace{-15pt}
		\begin{tikzcd}[column sep=tiny]
			&[-5ex] A
			\ar[dl, swap,, "\delta_A"]
			\ar[dr, "\delta_A"] &[-5ex] \\
			A \otimes_\lax A
			\ar[d, "{\delta_A \otimes_\lax -}"]
			&& A \otimes_\lax A
			\ar[d, "{- \otimes_\lax \delta_A}"] \\
			(A \otimes_\lax A) \otimes_\lax A
			\ar[dr]
			&& A \otimes_\lax (A \otimes_\lax A)
			\ar[dl] \\
			& A \otimes_\lax A \otimes_\lax A &
		\end{tikzcd}
		\qquad
		\begin{tikzcd}[column sep=small]
			&[-2ex] A
			\ar[dl, swap, "\delta_A"]
			\ar[ddd]
			\ar[dr, "\delta_A"] &[-2ex] \\
			A \otimes_\lax A
			\ar[d, "{\epsilon_A \otimes_\lax -}"]
			&& A \otimes_\lax A
			\ar[d, "{- \otimes_\lax \epsilon_A}"] \\
			1_\lax \otimes_\lax A
			\ar[dr]
			&& A \otimes_\lax 1_\lax
			\ar[dl] \\
			& \lax^0(A) &
		\end{tikzcd}
	\]
	commute;
	\end{description}
	which are also required to satisfy the compatibility
	conditions
	\[
		\begin{tikzcd}
			A \otimes_\oplax A
			\ar[dd, swap, "\mu_A"]
			\ar[dr, "\delta_A \otimes_\oplax \delta_A"] \\
			& (A \otimes_\lax A) \otimes_\oplax (A \otimes_\lax A)
			\ar[dd] \\
			A
			\ar[dd, swap, "\delta_A"] \\
			& (A \otimes_\oplax A) \otimes_\lax (A \otimes_\oplax A)
			\ar[dl, "\mu_A \otimes_\lax \mu_A"] \\
			A \otimes_\lax A
		\end{tikzcd}
		\qquad
		\begin{tikzcd}[sep=large]
			1_\oplax
			\arrow[dr]
			\arrow[r, "\eta_A"]
			& A 
			\arrow[d, "\epsilon_A"{right}] \\
			& 1_\lax
		\end{tikzcd}
	\]
	\[
		\begin{tikzcd}[sep=large]
			1_\oplax
			\arrow[d]
			\arrow[r, "\eta_A"]
			& A
			\arrow[d, "\delta_A"{right}]\\
			1_\oplax \otimes_\lax 1_\oplax
			\arrow[r, "\eta_A \otimes_\lax \eta_A"]
			& A \otimes_\lax A
		\end{tikzcd}
		\qquad
		\begin{tikzcd}[sep=large]
			A \otimes_\oplax A
			\arrow[d, "\epsilon_A
			\otimes_\oplax \epsilon_A"{left}]
			\arrow[r, "\mu_A"]
			& A
			\arrow[d, "\epsilon_A"{right}] \\
			1_\lax \otimes_\oplax 1_\lax
			\arrow[r]
			& 1_\lax
		\end{tikzcd}
	\]
	which impose that \( \mu_A \) and \( \eta_A \)
	are comonoid morphisms, or equivalently \( \delta_A \)
	and \( \epsilon_A \) are monoid morphisms.
\end{proposition}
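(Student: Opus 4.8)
The plan is to unravel the two equivalent descriptions of a bimonoid and to feed them into the reconstruction theorems already at our disposal. A bimonoid structure on \( A \) is, by definition\UnskipRef{definition: bimonoid}, exactly a comonoid structure on \( A \) in the category of monoids \( \mathsf{Mon}(\Category D^\oplax) \), which carries the lax monoidal structure induced by \( \lax \)\UnskipRef{prop:monoids=lax+comonoid=oplax}. Since comonoid structures in a \emph{lax} monoidal category can be truncated (the reconstruction theorem \UnskipRef{prop:truncation_lax_functors} applying mutatis mutandis to oplax monoidal structures on functors with oplax source and lax target), giving such a comonoid structure amounts to specifying an object of \( \mathsf{Mon}(\Category D^\oplax) \), \ie a monoid \( (A, \{m_A^n\}) \) in \( \Category D^\oplax \), together with two \emph{morphisms of monoids}
\[
    \delta_A \From A \to A \otimes_\lax A
    \qand
    \epsilon_A \From A \to 1_\lax
\]
(where \( A \otimes_\lax A \) and the unit object \( 1_\lax \) are equipped with their canonical monoid structures in \( \mathsf{Mon}(\Category D^\oplax) \)), subject to the coassociativity and counitality axioms \emph{computed inside} \( \mathsf{Mon}(\Category D^\oplax) \). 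We would then match each of these three pieces of data against a bullet of the statement.

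For the first piece, the reconstruction theorem for monoids in an oplax monoidal category\UnskipRef{prop:truncation_lax_functors} (applied to the punctual category) identifies the monoid \( (A, \{m_A^n\}) \) with the pair \( \mu_A \coloneqq m_A^1 \), \( \eta_A \coloneqq m_A^{-1} \) subject to the associativity and unitality diagrams---the \textbf{Product and unit} bullet. For the second, we would use that the forgetful functor \( \Forget \From \mathsf{Mon}(\Category D^\oplax) \to \Category D \) is faithful and, by the construction of the induced structure\UnskipRef{prop:monoids=lax+comonoid=oplax}, strictly monoidal for \( \lax \)---the object underlying \( \lax^n \) of a family of monoids being \( \lax^n \) of the underlying objects, with \( \Forget \) sending the structure maps to the structure maps---so that the coassociativity and counitality diagrams for \( (\delta_A, \epsilon_A) \) hold inside \( \mathsf{Mon}(\Category D^\oplax) \) if and only if their \( \Forget \)-images hold in \( \Category D \); this is the \textbf{Coproduct and counit} bullet.

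It then remains to unwind the requirement that \( \delta_A \) and \( \epsilon_A \) be morphisms of monoids. A morphism of monoids in \( \Category D^\oplax \) is a monoidal natural transformation between lax functors from the punctual category, hence is detected by its components of arity \( 1 \) and \( -1 \)\UnskipRef{lemma:truncation_monoidality}; concretely, each of \( \delta_A \) and \( \epsilon_A \) is a morphism of monoids if and only if it is compatible with \( \mu_A \) and with \( \eta_A \). Expanding these four identities by means of the monoid structures that the induced lax structure places on the objects in play, namely
\[
	\mu_{A \otimes_\lax A} = (\mu_A \otimes_\lax \mu_A) \circ \chi^{1,1}, \quad
	\eta_{A \otimes_\lax A} = (\eta_A \otimes_\lax \eta_A) \circ \chi^{-1,1}, \quad
	\mu_{1_\lax} = \chi^{1,-1}, \quad
	\eta_{1_\lax} = \chi^{-1,-1},
\]
one recovers exactly the four compatibility diagrams of the statement, the unlabelled arrows occurring there being precisely these components of \( \chi \). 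Reading the whole chain of identifications backwards turns the data of the statement into a genuine comonoid of \( \mathsf{Mon}(\Category D^\oplax) \), \ie a bimonoid, which closes the equivalence.

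The part demanding the most care is the identification stressed in the middle paragraphs: that the lax structure borne by \( \mathsf{Mon}(\Category D^\oplax) \) is literally \( \lax \) on underlying objects with \( \Forget \) strictly monoidal, and that the induced monoid structures on \( A \otimes_\lax A \) and on \( 1_\lax \) are given by the components \( \chi^{p,q} \) with \( p, q \in \{-1,1\} \) exactly as they appear in the statement's diagrams---this is where the \( (-1) \)-indexing conventions and the permutations \( \CanonicalTwist^{p,q} \) have to be matched against \UnskipRef{prop:monoids=lax+comonoid=oplax}; the rest is routine transcription. Symmetrically, one could realise a bimonoid as a monoid in the oplax monoidal category \( \mathsf{Comon}(\Category D^\lax) \) and run the mirror argument, applying \UnskipRef{prop:truncation_lax_functors} directly rather than in dual form.
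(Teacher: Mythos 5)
Your proof is correct, and it rests on the same two technical pillars as the paper's own argument: the reconstruction theorem for lax (and, dually, oplax) monoidal functors\UnskipRef{prop:truncation_lax_functors} to truncate the monoid structure in \( \Category D^\oplax \) and the comonoid structure in \( \Category D^\lax \), and the truncation of the monoidality condition\UnskipRef{lemma:truncation_monoidality} to reduce the compatibility squares to the four diagrams with \( p,q \in \{-1,1\} \). The difference is one of routing: the paper applies these results directly to the concrete unravelled description of a bimonoid (the object \( A \) with the families \( \{m_A^n\} \), \( \{w_A^n\} \) and the \( \chi^{p,q} \)-squares for all \( p,q \geq -1 \)) spelled out just before the proposition, whereas you go back to the abstract definition\UnskipRef{definition: bimonoid} as a comonoid in \( \mathsf{Mon}(\Category D^\oplax) \) equipped with the lax structure induced by \( \lax \)\UnskipRef{prop:monoids=lax+comonoid=oplax}, truncate there, and then descend along the forgetful functor. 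This costs you the extra bookkeeping you rightly flag---that the forgetful functor \( \mathsf{Mon}(\Category D^\oplax) \to \Category D \) is faithful and strictly compatible with \( \lax \), and that the induced monoid structures on \( A \otimes_\lax A \) and \( 1_\lax \) are exactly \( (\mu_A \otimes_\lax \mu_A)\chi^{1,1} \), \( (\eta_A \otimes_\lax \eta_A)\chi^{-1,1} \), \( \chi^{1,-1} \), \( \chi^{-1,-1} \)---which is implicit in the cited construction of the induced lax structure rather than proved in the paper; in exchange it makes transparent that the four compatibility diagrams of the statement are precisely the monoid-morphism conditions for \( \delta_A \) and \( \epsilon_A \), something the paper only records in prose. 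Either route closes the equivalence, and your mirror remark (monoids in \( \mathsf{Comon}(\Category D^\lax) \)) is the expected dual.
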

\begin{proof}
	The truncations of a monoid structure in \( \Category D^\oplax \)
	and of a comonoid structure in \( \Category D^\lax \)
	follow from the reconstruction theorem 
	\UnskipRef{prop:truncation_lax_functors} on certain lax and
	oplax monoidal functors, whereas the truncations of the monoid
	and comonoid morphism condition follow from the truncation
	of the monoidality condition between such functors
	\UnskipRef{lemma:truncation_monoidality}.
\end{proof}

The notion of bimonoid in a lax-oplax duoidal category
extends the one of bimonoid in a 
duoidal category \cite{doi:10.1090/crmm/029}
which itself generalises the notion of bimonoid
in a braided monoidal category.
The princeps of such a bimonoid in a braided monoidal category
is the one of bialgebras.
A particularly interesting example of bimonoid in a genuine
(\ie not braided) duoidal category
is given by bialgebroids over a commutative ring
$R$ defined as bimonoids in the duoidal category of
$(R,R)$\=/bimodules \cite[6.44]{doi:10.1090/crmm/029}. 
However bialgebroids over a non-commutative ring
do not admit a characterisation as 
bimonoids in a duoidal category \cite[6.45]{doi:10.1090/crmm/029}.
The relevant framework in this case is given by
lax-strong duoidal categories, that is
lax-oplax duoidal categories for which the oplax
monoidal structure is strong monoidal.

\subsection{Lax-strong duoidal categories}

The example that we shall give of a lax-oplax duoidal category
\SectionRef{sec:Re-bimod} is in fact lax-strong.
In the case where the oplax structure is strong,
the description of the lax-oplax structure can be simplified.

\begin{proposition}[(Lax-strong duoidal category)]
	\label{prop: lax-strong}
	A lax-strong duoidal structure on a category 
	\( \Category D \) can be described as the data of a lax monoidal
	structure \( \Category D^\lax \), a strong monoidal 
	structure \( \Category D^\otimes \),
	together with natural transformations
	\[
		\begin{tikzcd}
			\lax^n \otimes \lax^n
			\rar["\chi^n"]
				& 
				\lax^n \circ (\otimes \times \dots \times \otimes)
				\circ \CanonicalTwist^{1,n}
		\end{tikzcd}
	\]
	and morphisms
	\[
		\begin{tikzcd}
			1_\otimes
			\rar["w^n"]
				& \lax^n 1_\otimes	
		\end{tikzcd}
	\]
	for all integers \( n \geq -1 \) such that
	\begin{itemize}
		\item
			each pair \( (\chi^n, w^n) \) endows
			\[
				\lax^n\From (\Category D \times \dots \times \Category D)^\otimes
				\longrightarrow \Category D^\otimes
			\]
			with a structure of lax monoidal functor
			(between strong monoidal categories);
		\item
			the natural transformations
			\( \chi^n \circ \CanonicalTwist^{1,n} \) endow
			\[
				\otimes \From
				(\Category D \times \Category D)^\lax
				\longrightarrow
				\Category D^\lax
			\]
			with a structure of oplax monoidal functor
			(between lax monoidal categories);
		\item
			the sequence \( \{w^n\}_{n \geq -1} \)
			endows the unit object \( 1_\otimes \)
			with a structure of comonoid
			in \( \Category D^\lax \).
	\end{itemize}
\end{proposition}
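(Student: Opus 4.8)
The plan is to show that a lax-strong duoidal structure (i.e. a lax-oplax duoidal structure in the sense of Definition~\ref{def:lax-oplax} in which the oplax structure $\Category D^\oplax$ happens to be strong) is equivalent to the simplified data described in the statement, by comparing the two sets of structure maps and axioms arity by arity. First I would set up the dictionary: in a lax-strong structure the strong monoidal structure $\Category D^\otimes$, having all associators and unitors invertible, is determined (up to canonical isomorphism) by its binary tensor $\otimes$ and unit $1_\otimes$; so the data $\oplax^p$ for $p \geq 2$ carries no new information, and the structure maps $\chi^{p,q}$ are, by the coherence of the strong structure in its first variable, likewise determined by the cases $p = 1$ and $p = -1$. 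Concretely I would argue that $\chi^{-1, q}$ is exactly the morphism $w^q \From 1_\otimes \to \lax^q(1_\otimes,\dots,1_\otimes)$ of the statement (recall Remark after Definition~\ref{def:lax-oplax}: for $p = -1$ the map $\chi^{-1,q}$ turns $1_\oplax = 1_\otimes$ into a $\lax$-comonoid), while $\chi^{1,q}$ is exactly the natural transformation $\chi^n$ of the statement, after identifying $\oplax^1 = \otimes$ and unwinding the permutation $\CanonicalTwist^{1,q}$. The higher $\chi^{p,q}$ for $p \geq 2$ are then recovered from $\chi^{1,q}$ and $\chi^{-1,q}$ via the coherence isomorphisms of $\otimes$, and one must check this recovery is forced by the axioms --- this is exactly the kind of truncation argument already used in Proposition~\ref{prop:truncation_lax_functors}, applied here to the oplax monoidal functors $\oplax^p$ and to the lax monoidal functors $\lax^q$ appearing in the two bullet points of Definition~\ref{def:lax-oplax}.

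Next I would translate the two defining conditions of Definition~\ref{def:lax-oplax}. The first bullet there asks that $\chi^{\bullet, q}$ make $\lax^q \From (\Category D^{\times(q+1)})^\oplax \to \Category D^\oplax$ a lax monoidal functor; since both source and target are now strong monoidal, the reconstruction theorem for lax monoidal functors between strong monoidal categories (the special case of Proposition~\ref{prop:truncation_lax_functors} noted in the Remark following it) says this is equivalent to giving just $\chi^{1,q}$ and $\chi^{-1,q}$ --- i.e. $\chi^q$ and $w^q$ --- subject to the usual associativity and unitality hexagon/pentagon of a lax monoidal functor; that is precisely the first bullet of the present proposition. The second bullet of Definition~\ref{def:lax-oplax} asks that $\chi^{p,\bullet}\CanonicalTwist^{p,q}$ make $\oplax^p \From (\Category D^{\times(p+1)})^\lax \to \Category D^\lax$ an oplax monoidal functor; by the Remark in \S\ref{sec:oplax_functors} on the reconstruction of oplax monoidal structures on functors from an oplax (here strong) source to a lax target, this too is truncated to its $p=1$ and $p=-1$ instances, which unwind respectively to the statement's requirement that $\chi^n\circ\CanonicalTwist^{1,n}$ give $\otimes$ an oplax monoidal structure over $\Category D^\lax$, and to the requirement that the $w^n$ make $1_\otimes$ a comonoid in $\Category D^\lax$ (the $p=-1$ case of an oplax monoidal functor from the punctual category being, by Definition in \S\ref{sec:monoids}, exactly a comonoid). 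I would also observe that the $p=1$ part of the second bullet and the full first bullet, together with the definition of $w^n$ as $\chi^{-1,n}$, interlock consistently --- the shared lower-arity data $\chi^1, \chi^{-1}$ must be checked to appear the same way in both descriptions, which is where one verifies that no compatibility condition is lost in the truncation.

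The main obstacle, and the step deserving the most care, is verifying that the two truncation processes (one running over the superscript $p$ of $\oplax^p$, the other over the $q$ of $\lax^q$) are compatible and that applying both does not silently drop an axiom: a priori Definition~\ref{def:lax-oplax} imposes infinitely many coherence squares relating $\chi^{p,q}$ for all $p,q$, and one must confirm that every such square is a consequence of the finitely many in the statement together with the strong-monoidal coherence of $\otimes$. I expect this to follow by a double induction of exactly the shape of the induction in the proof of Proposition~\ref{prop:truncation_lax_functors}: fix $q$ and induct on $p$ to reduce all $\chi^{p,q}$ to $\chi^{1,q},\chi^{-1,q}$ using that $\oplax^p$ is built from $\otimes$ by strong associators, then induct on $q$ to reduce $\chi^{1,q}$ and $\chi^{-1,q} = w^q$ to $\chi^1, \chi^{-1}=w^1$ and $w^{-1}$ using the lax-monoidal-functor structure of $\lax^q$; at each stage the relevant commuting diagram is pasted from the parallel/sequential decomposition axioms and naturality, exactly as in the earlier proofs, so no genuinely new calculation arises. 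Finally I would remark that when $\lax$ is moreover strong, the three bullets of the statement reduce to the structure maps and axioms of an ordinary duoidal category, recovering the cited classical description \cite{doi:10.1090/crmm/029}.
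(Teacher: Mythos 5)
Your core argument is the paper's own: since \( \oplax = \otimes \) is strong, the reconstruction theorem (Proposition~\ref{prop:truncation_lax_functors}) applies to each functor \( \lax^n \From (\Category D^{n+1})^\otimes \to \Category D^\otimes \), so the whole family \( \{\chi^{p,n}\}_{p \geq -1} \) is determined by \( \chi^n \coloneqq \chi^{1,n} \) and \( w^n \coloneqq \chi^{-1,n} \), and the listed conditions are the translation of Definition~\ref{def:lax-oplax} under this identification. Two of your supporting justifications, however, do not hold as stated. First, the Remark of \S\thinspace\ref{sec:oplax_functors} cannot be invoked for the second bullet of Definition~\ref{def:lax-oplax}: there \( \oplax^p \) is a functor \( (\Category D^{p+1})^\lax \to \Category D^\lax \), whose source carries the \emph{lax} structure, not the strong one --- your parenthetical ``(here strong)'' is the slip --- and in any case that remark truncates the arity index of the oplax structure of a \emph{fixed} functor (here the index \( q \)), whereas what must be shown is that the \( p = 0 \) and \( p \geq 2 \) instances of that bullet follow from the \( p = 1 \) and \( p = -1 \) ones. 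The correct justification is the induction over \( p \) you sketch at the end: the \( \chi^{p,q} \) for \( p \neq \pm 1 \) are themselves assembled from \( \chi^{1,q} \) and \( w^q \) by the reconstruction, and the oplax-functor conditions for \( \oplax^p \) then reduce, by pasting decomposition squares and naturality, to those for \( \oplax^1 \) and \( \oplax^{-1} \); this is exactly what the paper's terse ``the conditions follow from this identification'' covers.

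Second, the final step of your ``double induction'' --- reducing \( \chi^{1,q} \) and \( w^q \) to \( \chi^{1,1} \), \( w^1 \), \( w^{-1} \) by inducting over \( q \) --- is neither needed nor available. The statement keeps \emph{all} \( \chi^n \) and \( w^n \) as data precisely because the oplax structure \( \{\chi^n \CanonicalTwist^{1,n}\}_{n} \) on \( \otimes \From (\Category D \times \Category D)^\lax \to \Category D^\lax \) has lax source and lax target, so no reconstruction theorem applies to it (only the comonoid \( \{w^n\} \) could be truncated, \( 1_\otimes \) being an oplax functor out of the punctual, hence strong, category). Dropping that second induction and keeping the \( p \)-induction leaves your argument in agreement with the paper's proof.
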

\begin{proof}
	According to the previously given definition
	of a lax-oplax duoidal category
	\( \Category D^{\lax,\oplax} \)
	\UnskipRef{def:lax-oplax}, the functors
	\( \lax^n \) are lax functors between oplax
	monoidal categories. However, in the special
	case where \( \oplax=\otimes \) is strong monoidal,
	the proposition on the reconstruction
	of the lax structure of a functor
	\UnskipRef{prop:truncation_lax_functors}
	applies and tells us that for every integer
	\( n \geq -1 \), the natural transformations
	\[
		\begin{tikzcd}[sep=large]
			\oplax^k\, \lax^n
			\ar[r, "\chi^{k,n}"]
			& \lax^n \oplax^k \CanonicalTwist^{k,n}
		\end{tikzcd}
	\]
	are completely determined by 
	\( w^n \coloneqq \chi^{-1,n} \) and
	\( \chi^n \coloneqq \chi^{1,n} \).
	The conditions listed in the above proposition
	follow from this identification.
\end{proof}

Let us spell out the conditions to be verified by
the data of a lax-strong duoidal category 
\( (\Category D, \lax, \otimes, \chi, w) \).
That each pair \( (\chi^n,w^n) \) defines a structure
of lax monoidal functor on \( \lax^n \) imposes that
the associativity diagram
\[
	\begin{tikzcd}[sep=large]
		\big(\lax^n(\Collection x) \otimes \lax^n(\Collection y)\big)
						\otimes \lax^n(\Collection z)
		\ar[d, "{\chi^n \otimes -}"]
		\ar[rr]
		&& \lax^n(\Collection x) \otimes \big(\lax^n(\Collection y)
						\otimes \lax^n(\Collection z)\big)
		\ar[d, "{- \otimes \chi^n}"] \\
		\lax^n(\Collection{x \otimes y}) \otimes \lax^n(\Collection z)
		\ar[d, "\chi^n"]
		&& \lax^n(\Collection x) \otimes \lax^n(\Collection{y \otimes z})
		\ar[d, "\chi^n"] \\
		\lax^n\big(\Collection{(x \otimes y) \otimes z}\big)
		\ar[rr]
		&& \lax^n\big(\Collection{x \otimes (y \otimes z)}\big)
	\end{tikzcd}
\]
as well as the two unitality diagrams
\[
	\begin{tikzcd}[sep=normal]
		& \lax^n(\Collection x)
		\ar[dl]
		\arrow[dddd, equal] \\
		1_\otimes \otimes \lax^n(\Collection x)
		\ar[d, "{w^n \otimes -}"] & \\
		\lax^n(1_\otimes) \otimes \lax^n(\Collection x)
		\ar[d, "\chi^n"] & \\
		\lax^n(\Collection{1_{\otimes} \otimes x})
		\ar[dr] & \\
		& \lax^n(\Collection x)
	\end{tikzcd}
	\qquad
	\begin{tikzcd}[sep=normal]
		\lax^n(\Collection x)
		\ar[dr]
		\arrow[dddd, equal] & \\
		& \lax^n(\Collection x) \otimes 1_\otimes
		\ar[d, "{- \otimes w^n}"] \\
		& \lax^n(\Collection x) \otimes \lax^n(1_\otimes)
		\ar[d, "\chi^n"] \\
		& \lax^n(\Collection{x \otimes 1_{\otimes}})
		\ar[dl] \\
		\lax^n(\Collection x) &
	\end{tikzcd}
\]
commute for any integer \( n \geq -1 \).
The requirement that \( \chi^n \CanonicalTwist^{1,n} \)
also endow \( \otimes \) with a structure of oplax
monoidal functor, means that the unitality diagram
\[
	\begin{tikzcd}
		x \otimes y
		\ar[dr, equal]
		\rar
		& \lax^0(x) \otimes \lax^0(y)
		\arrow[d, "\chi^0"] \\
		& \lax^0(x \otimes y)
	\end{tikzcd}
\]
commutes for any pair of objects \( x, y \) of
\( \Category D \), the additivity diagrams
\[
	\begin{tikzcd}
		\lax^p(-\lax^q-)(\Collection x)
				\otimes \lax^p(-\lax^q-)(\Collection y)
		\ar[d, "\chi^p"]
		\rar
		& \lax^{p+q}(\Collection x) \otimes \lax^{p+q}(\Collection y)
		\ar[dd, "\chi^{p+q}"] \\
		\lax^p\big(\Collection{x}_a \otimes \Collection{y}_a,
			\lax^q(\Collection{x}_b) \otimes \lax^q(\Collection{y}_b),
				\Collection{x}_c \otimes \Collection{y}_c\big)
		\ar[d, "\lax^p(-\chi^q-)"] & \\
		\lax^p(-\lax^q-)(\Collection{x \otimes y})
		\rar
		& \lax^{p+q}(\Collection{x \otimes y})
	\end{tikzcd}
\]
as well as the comonoid condition diagrams
\[
	\begin{tikzcd}
		1_\otimes
		\ar[dr, "w^p"]
		\ar[ddd, swap, "w^{p+q}"] & \\
		& \lax^p(1_\otimes)
		\ar[d, "\lax^p(-w^q-)"] \\
		& \lax^p(-\lax^q-)(1_\otimes)
		\ar[dl] \\
		\lax^{p+q}(1_\otimes) &
	\end{tikzcd}
\]
commute for any \( p \geq 0 \) and \( q \geq -1 \).

\section{Categories enriched over oplax monoidal categories}
\label{Sec: oplax enriched}

Oplax monoidal categories can be seen as the oplax monoids
of \( \Cat^\times \).
Thus one is naturally led to study its modules
\SectionRef{sec:V-modules}.
As one could expect, modules over
an oplax monoidal category consist in a category together
with infinitely many functors (instead of a single bifunctor)
encoding the action of the (arbitrarily many copies of the)
oplax monoidal category on it.
We shall then repeat
this process of constructing a lax monoid and its lax modules
in a bigger bicategory
\( \Cat \subset \Cat_\deltaup \),
which is obtained from \( \Cat \) by replacing functors
with distributors.
We shall refer to these modules
as \( \deltaup \)modules, and show that a particular class
of the latter, namely those which are left representable
(defined below) are in fact equivalent to ordinary modules
over an oplax monoidal category.

One can also identify another class of \( \deltaup \)modules,
which we shall call right representable, and relate them to
categories endowed with additional structures defined in terms
of an oplax monoidal structure. We shall identify the latter as categories
enriched over an oplax monoidal category. Recall that
given a monoidal category \( \Category V^\otimes \),
a \( \Category V \)-category \( \Category C \) has a set of objects
\( \Objects(\Category C) \) and \( \Category V \)-hom objects
\( [x, y] \in \Objects(\Category V) \) for every pair
\( x, y \in \Objects(\Category C) \). There is also a composition map
\( \mu \From [y, z] \otimes [x, y] \to [x, z] \).
Now if \( \Category V^\oplax \) is an oplax monoidal category,
one expects to also have composition maps
\( [y, z] \otimes_\oplax [x, y] \to [x, z] \)
and also countably many higher composition maps \( \mu^n \) such as
\[
	\begin{tikzcd}
		{[y, z] \otimes_\oplax [x, y] \otimes_\oplax [w, x]}
		\rar["\mu^2"]
			& {[w, z]}
	\end{tikzcd}
\]
which are evident when \( \Category V^\oplax \) is strong.

There are a few new phenomena that appear
when enriching over an oplax monoidal category:
\begin{enumerate}
	\item
		By the \emph{oplax} nature of the tensor structure, one can
		always recover the higher composition maps \( \mu^n \)
		for \( n \geq 2 \) from \( \mu^1 \) and \( \mu^{-1} \).
		This leads to two equivalent definitions
		for \( \Category V \)-categories:
		an \emph{extended} one including
		all maps \( \mu^n \) and an \emph{abridged} one, including only
		\( \mu^1 \) and \( \mu^{-1} \).
	\item
		By the \emph{non-normal} nature of the oplax monoidal structure,
		even though a \( \Category V \)-category---by which we mean
		a collection of objects \( x, y, \dots \) and hom-objects
		\( [x,y], \dots \),
		together with composition maps as above---always
		possesses an underlying
		\( \Sets^\times \)-category with set of arrows \( x \to y \)
		given by the maps \( 1_\oplax \to [x, y] \), the object
		\( [x, y] \) is usually \emph{not functorial} in \( x \) and
		\( y \).
		Instead it is \( [x, y]_\oplax \coloneqq \oplax^0([x, y]) \)
		which is functorial. This leads us to define a category
		enriched over \( \Category V \) as a category \( \Category C \)
		endowed with additional structure, namely a bifunctor
		\( [-,-] \From \Category C \times \Category C\Op \to \Category V \)
		and natural transformations \( \mu \) between tensor products
		of \( [-,-] \). In other words, we \emph{require} the enrichment
		structure to be functorial instead of recovering it as a byproduct,
		as happens when enriching over a strong monoidal category.
\end{enumerate}
We shall spell out in more details the notion of categories enriched
over an oplax monoidal category \( \Category V^\oplax \)
\SectionRef{sec:Cat_V},
and show that they are equivalent to the aforementioned
right representable \( \Vdelta \)modules
\SectionRef{sec:right-rep}.
We shall then compare categories enriched over \( \Category V \) with
the notion of \( \Category V \)\=/categories, which is obtained
by trying to mimic as closely as possible the standard construction
of enrichment over strong monoidal categories, \ie without
requiring the enrichment bifunctor or the associated composition
maps to be functorial \SectionRef{sec:V-Cat&Cat_V}.
Finally, we shall finish by presenting an example of category
enriched over an oplax monoidal category, in the context of 
the theory of operads \SectionRef{sec:operads}.

To help jumping between definitions, we shall continue using the
notations \( 1_\oplax, x \otimes_\oplax y, x \otimes_\oplax y
\otimes_\oplax z \)
or \( \oplax^p(x_0, \dots, x_p) \) depending on when it is the most
appropriate.

\subsection{\( \Category V \)\=/modules}
\label{sec:V-modules}

In this subsection, we shall fix \(\Category V^\oplax\)
to be an oplax monoidal category. As the latter
is nothing but an oplax monoid in \( \Cat^\times \),
one can consider the 2-category
\( \VMods \)
of modules over it with lax maps, which 
will be defined hereafter.

\begin{definition}[(\( \Category V \)\=/module)]
    A \( \Category V \)\=/module \( \Category M^\rho \)
    is the data of a category \( \Category M \)
    equipped with
    \begin{itemize}
    \item structure maps
    \[
        \begin{tikzcd}
            \Category V^n \times \Category M
            \ar[r, "\rho^n "]
            & \Category M
        \end{tikzcd}
    \]
    for any \( n \geq 0 \);
    \item natural transformations
    \[
        \begin{tikzcd}
            \rho^{p+q}
            \ar[r]
            & \rho^p (-,
			\underset{i\text{-th}}{\underset{\uparrow}{\oplax}^q},-)
        \end{tikzcd}
    \]
    for every \( p \geq 1 \), \( q \geq -1 \), and
    \( 0 \leq i < p \) is the number of entries
    on the left of \( \oplax^q \);
    \item natural transformations
    \[
        \begin{tikzcd}
            \rho^{p+q}
            \ar[r]
            & \rho^p (-, \rho^q)
        \end{tikzcd}
    \]
    for every \( p, q \geq 0 \);
    \item a natural transformation
    \[
        \begin{tikzcd}
            \rho^0
            \ar[r]
            & \IdentityFunctor_{\Category M}
        \end{tikzcd}
    \]
    \end{itemize}
    such that
    \begin{description}
    \item[Counitalities] 
        \[
        \begin{tikzcd}
            \rho^p
            \rar
            \ar[dr, equal]
            & \rho^0 \rho^p
            \dar \\
            & \rho^p
        \end{tikzcd}
    \]
    and
   \[
        \begin{tikzcd}
            \rho^p
            \rar
            \ar[dr, equal]
            & \rho^p(-,\rho^0)
            \dar \\
            & \rho^p
        \end{tikzcd}
        \qquad
        \begin{tikzcd}
            \rho^p
            \rar
            \ar[dr, equal]
            & \rho^p(-,\oplax^0,-)
            \dar \\
            & \rho^p
        \end{tikzcd}
    \]
    commute whenever they make sense;
    \item[Parallel decompositions] 
    	\[
    	    \begin{tikzcd}[column sep = small]
    	        \rho^{p+q+r}
    	        \rar
    	        \dar 
    	        & 
    	        \dar  \rho^{p+q}(-,\rho^r)\\
    	       \rho^{p+r}(-,\oplax^q,-)
    	        \rar 
    	        & \rho^p(-,\oplax^q,-,\rho^r)
    	    \end{tikzcd}
    	    \quad
    	    \begin{tikzcd}[column sep = small]
    	        \rho^{p+q+r}
    	        \rar
    	        \dar
    	        &
    	        \dar \rho^{p+q}(-,\oplax^r,-)\\
    	         \rho^{p+r}(-,\oplax^q,-)
    	        \rar 
    	        & \rho^p(-,\oplax^q,-,\oplax^r,-)
    	    \end{tikzcd}
    	\]
    	commute whenever they make sense;
    \item[Sequential decompositions]
    \[
        \begin{tikzcd}[column sep=small]
            \rho^{p+q+r}
            \rar
            \dar
            & 
            \dar \rho^{p+q}(-,\oplax^r,-)\\
            \rho^p(-,\rho^{q+r})
            \rar
            & \rho^p(-,\rho^q(-,\oplax^r,-))
        \end{tikzcd}
        \quad
        \begin{tikzcd}[column sep=small]
            \rho^{p+q+r}
            \rar
            \dar
            & \rho^{p+q}(-,\rho^r)
            \dar \\
            \rho^p(-,\rho^{q+r})
            \rar
            & \rho^p(-,\rho^q(-,\rho^r))
        \end{tikzcd}
    \]
    and
    \[
        \begin{tikzcd}[column sep=small]
            \rho^{p+q+r}
            \rar
            \dar
            & \rho^{p+q}(-,\oplax^r,-)
            \dar \\
            \rho^p(-,\oplax^{q+r},-)
            \rar
            & \rho^p(-,\oplax^q(-,\oplax^r,-),-)
        \end{tikzcd}
    \]
    commute whenever they make sense.
    \end{description}
\end{definition}

\begin{remark}
	Note that the counitality conditions, as well as 
	the parallel and sequential decomposition conditions,
	of a \( \Category V \)\=/module are simply obtained
	from those of an oplax monoidal category by replacing
	the oplax monoidal structure with a module structure
	map whenever it is possible. In particular,
	any oplax monoidal category is a module over itself.
\end{remark}

\begin{definition}[(Lax \( \Category V \)\=/module morphism)]
	A lax morphism between two modules
	\( \Category M^\rho \)
	and \( \Category N^\sigma \)
	is the data of a functor \( F \From \Category M \to \Category N \)
	together with natural transformations
    \[
        \begin{tikzcd}
        	\sigma^n (-, F)
        	\ar[r, "f^n"]
        	& F \rho^n
        \end{tikzcd}
    \]
    for all \( n \geq 0 \), such that the diagrams
    \[
    	\begin{tikzcd}[sep=large]
    		\sigma^0 F
			\ar[dr]
    		\ar[d, "f^0" swap]
    		&
    		\\
    		F \rho^0
    		\rar
    		& F
    	\end{tikzcd}
    	\qquad
        \begin{tikzcd}[sep=large]
            \sigma^{p+q}(-,F)
            \ar[d, "f^{p+q}" swap]
            \rar
            & \sigma^p(-,\oplax^q,-,F)
            \ar[d, "f^p"] \\
            F \rho^{p+q}
            \rar
            & F \circ \rho^p(-,\oplax^q,-)
        \end{tikzcd}
    \]
    and
    \[
        \begin{tikzcd}[sep=large]
            \sigma^{p+q}(-,F)
            \ar[dd, "f^{p+q}" swap]
            \rar
            & \sigma^p\big(-,\sigma^q(-,F)\big)
            \ar[d, "{\sigma^p(-,f^q)}"] \\
            & \sigma^p(-,F \rho^q)
            \ar[d, "f^p"] \\
            F \rho^{p+q}
            \rar
            & F \rho^p(-,\rho^q)
        \end{tikzcd}
    \]
    commute whenever they make sense.
\end{definition}

\begin{definition}[(Natural transformation of \( \Category V \)\=/module morphisms)]
    A natural transformation of
    \( \Category V \)\=/module morphisms
    between two
    lax morphisms
    \( F,G \From \Category M^\rho \to \Category N^\sigma \),
    with structure \( f \) and \( g \) respectively,
    is a natural transformation \( \alpha \From F \implies G \)
    such that the diagram
    \[
        \begin{tikzcd}[sep=large]
            \sigma^n(-, F)
            \ar[r, "f^n"]
            \ar[d, "{\sigma^n(-,\alpha)}" swap]
            & F \rho^n
            \ar[d, "\alpha_{\rho^n}"] \\
            \sigma^n(-, G)
            \ar[r, "g^n"]
            & G \rho^n
        \end{tikzcd}
    \]
    commutes for any \( n \geq 0 \).
\end{definition}

\subsection{\( \Vdelta \)modules}

We shall now introduce a `distributor version'
of \( \Category V \)-modules, that we shall call
\( \Vdelta \)modules and we shall show that
the 2-category of
\( \Category V \)-modules is equivalent
to a certain subcategory of left representable
\( \Vdelta \)modules.

\paragraph{Distributors}
Recall that distributors---initially introduced
by Bénabou
\cite{Benabou} and Lawvere
\cite{doi:10.1007/bf02924844}
under the name `bimodule'---are a particular kind of
\( \Sets \)\=/valued functors.
More precisely, a distributor
between two categories \( \Category B \) and \( \Category C \) is a functor
\( T \From \Category B\Op \times \Category C \to \Sets \), 
denoted \( T \From \Category B \nrightarrow \Category C \).
We shall write
\(
	T \inout{b}{c}
\)
for the set obtained by evaluating the distributor
\( T \) on an object \( b \) of \( \Category B \)
and an object \( c \) of \( \Category C \).
Note that distributors are also called `profunctors' or `bimodules'
while the 2-category \( \Cat_\deltaup \) of categories, distributors and
natural transformations is also denoted \( \mathsf{Prof} \)
in the literature.

The composition of two distributors
\( T \From \Category A \nrightarrow \Category B \) and
\( U \From \Category B \nrightarrow \Category C \),
is again a distributor
\( U \circ T \From \Category A \nrightarrow \Category C \),
defined as the integral
\[
	(U \circ T) \inout {a}{c} \coloneqq
	\Integral{b} U \inout{b}{c} \times T \inout{a}{b}\,,
\]
for any objects \( a \in \Objects(\Category A) \)
and \( c \in \Objects(\Category C) \). Note that this composition
is associative and unital (the identity distributor is simply
the hom-functor) only up to an isomorphism, so that
\( \Cat_\deltaup \) is a bicategory
\cite[Chap. 5]{doi:10.1017/9781108778657}.

Every functor \( F \From \Category B \to \Category C \)
defines a distributor
\( \Category C \inout {F(-)} -
\From \Category B \nrightarrow \Category C \)
and this assignment induces a 2-functor
\( \Cat^{2\text{-}\mathrm{op}} \to \Cat_\deltaup \).
Its essential image,
that we shall denote by
\( \Cat_{\deltaup}^{\mathrm{1-rep}} \),
is comprised of distributors that are called \emph{left representable}.

\begin{definition}[(\( \Vdelta \)module)]
    A lax \( \Vdelta \)module \( \Category M^R \)
    is a category \( \Category M \) equipped with
	\begin{itemize}
		\item 
    		distributors
    		\[
    		    R^n \From \Category V^n \times \Category M
    		    \nrightarrow \Category M
    		\]
    		for all natural numbers \( n \geq 0 \);
		\item
    		natural transformations
    		\[
    		    \begin{tikzcd}
    		        R^p (-,
					\underset{i\text{-th}}{\underset{\uparrow}{\oplax}^q},-)
    		        \ar[r]
    		        & R^{p+q}
    		    \end{tikzcd}
    		\]
			for every \( p \geq 1, q \geq -1 \) and every
			\( 0 \leq i < p \) representing the number of
			entries on the left of \( \oplax^q \);
		\item
    		natural transformations
    		\[
    		    \begin{tikzcd}
    		        R^p (-,R^q) 
    		        \ar[r]
    		        & R^{p+q}
    		    \end{tikzcd}
    		\]
			for every \( p, q \geq 0 \);
		\item
			and a natural transformation
    		\[
    		        \IdentityFunctor_{\Category M}
					\longrightarrow R^0
    		\]
	\end{itemize}
	such that
    \begin{description}
    \item[Unitalities] 
        \[
        \begin{tikzcd}
            \IdentityFunctor_{\Category M} \circ R^p
            \rar
            \ar[dr,
			"\rotatebox{-25}{$\IsIsomorphicTo$}", outer sep = -1.5pt]
            & R^0 \circ R^p
            \dar \\
            & R^p
        \end{tikzcd}
    \]
    and
   \[
        \begin{tikzcd}
            R^p(-, \IdentityFunctor_{\Category M})
            \rar
            \ar[dr, "\rotatebox{-25}{$\IsIsomorphicTo$}",
				outer sep = -1.5pt]
            & R^p(-,R^0)
            \dar \\
            & R^p
        \end{tikzcd}
        \qquad
        \begin{tikzcd}
            R^p(-, \IdentityFunctor_{\Category V}, -)
            \rar
            \ar[dr, "\rotatebox{-25}{$\IsIsomorphicTo$}",
				outer sep = -1.5pt]
            & R^p(-,\oplax^0,-)
            \dar \\
            & R^p
        \end{tikzcd}
    \]
    commute whenever they make sense;
    \item[Parallel compositions] 
    	\[
    	    \begin{tikzcd}[column sep = small]
    	        R^p(-,\oplax^q,-,R^r)
    	        \rar
    	        \dar 
    	        & 
    	        \dar  R^{p+q}(-,R^r)\\
    	        R^{p+r}(-,\oplax^q,-)
    	        \rar 
				& R^{p+q+r}
    	    \end{tikzcd}
    	    \quad
    	    \begin{tikzcd}[column sep = small]
    	        R^p(-,\oplax^q,-,\oplax^r,-)
    	        \rar
    	        \dar
    	        &
    	        \dar R^{p+q}(-,\oplax^r,-)\\
    	        R^{p+r}(-,\oplax^q,-)
    	        \rar 
    	        & R^{p+q+r}
    	    \end{tikzcd}
    	\]
    	commute whenever they make sense;
	\item[Sequential compositions]
		\[
			\hspace{-1.1cm}
			\begin{tikzcd}[column sep=small]
				R^p(-,\oplax^q(-,\oplax^r,-),-)
				\rar
				\dar
				& R^{p+q}(-,\oplax^r,-)
				\dar \\
				R^p(-,\oplax^{q+r},-)
				\rar
				& R^{p+q+r}
			\end{tikzcd}
			\quad
			\begin{tikzcd}[column sep=small]
				R^p(-,R^q(-,\oplax^r,-))
				\rar
				\dar
				& 
				\dar R^{p+q}(-,\oplax^r,-)\\
				R^p(-,R^{q+r})
				\rar
				&
				R^{p+q+r}
			\end{tikzcd}
		\]
		and
	\[
		\begin{tikzcd}
			R^p \circ (-,R^q(-,R^r))
			\dar
			\rar["\IsIsomorphicTo", dash]
			&[-3ex]
			R^p(-,R^q) \circ (-,R^r)
			\rar
			&[3ex] R^{p+q}(-,R^r)
			\ar[d] \\
			R^p(-,R^{q+r})
			\ar[rr]
			&& R^{p+q+r}
		\end{tikzcd}
	\]
    commute whenever they make sense.
    \end{description}

    If the structure maps \( R^n \) are left representable
    distributors, then \( \Category M^R \) will
    be called a left representable lax \( \Vdelta \)module.
\end{definition}

\begin{remark}
	Via \( \Cat^{2\text{-}\mathrm{op}} \to \Cat_\deltaup \), every
	oplax monoid is sent to a (representable) lax monoid.
	The definition of \( \Vdelta \)module then corresponds
	to the notion of module over the lax monoid
	induced by \( \Category V^\oplax \) in \( \Cat_\deltaup \).
\end{remark}

\begin{lemma}\label{lemma:left-rep}
    Let \( \Category M \) be a category and
    \( R^n \From \Category V^n \times \Category M
    \nrightarrow \Category M \) be left representable
    distributors for \( n \geq 0 \). For each representations
    \( \rho^n \From \Category V^n \times \Category M
    \to \Category M \), for \( n \geq 0 \), of the previous
    collection of distributors, there is a bijection
    between lax \( \Vdelta \)module structures extending
	\( \Category M^R \) and \( \Category V \)\=/module
    structures extending \( \Category M^\rho \).
\end{lemma}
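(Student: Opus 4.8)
The plan is to reduce the statement to the Yoneda lemma, by spelling out the ``dictionary'' between left representable distributors and functors underlying the \( 2 \)\=/functor \( \Cat^{2\text{-}\mathrm{op}} \to \Cat_\deltaup \) recalled above. Choosing a representation \( \rho^n \) of \( R^n \) amounts to fixing a natural isomorphism of distributors \( R^n \cong \Category M(\rho^n(-),-) \). Two facts then drive the argument. First, the \( 2 \)\=/functor \( \Cat^{2\text{-}\mathrm{op}} \to \Cat_\deltaup \) is locally fully faithful --- this is exactly the Yoneda lemma --- so for left representable distributors \( R, R' \) with chosen representations \( \rho, \rho' \) there is a bijection between natural transformations of distributors \( R \Rightarrow R' \) and natural transformations of functors \( \rho' \Rightarrow \rho \), \emph{reversing the direction}. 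Second, a composite of left representable distributors is again left representable, its representation being the composite of the representing functors: if \( T \) is represented by \( F \) and \( U \) by \( G \), the co-Yoneda lemma gives \( (U\circ T)(a,c) = \Integral{b} U(b,c) \times T(a,b) \cong \Category C(GF(a),c) \), and this identification is compatible with the associativity and unit coherence \( 2 \)\=/cells of the bicategory \( \Cat_\deltaup \). Hence every distributor occurring in the \( \Vdelta \)module axioms --- namely \( R^p(-,\oplax^q,-) \), \( R^p(-,R^q) \), \( R^p(-,R^q(-,\oplax^r,-)) \), \( R^p(-,R^q(-,R^r)) \), and \( \IdentityFunctor_{\Category M} \) regarded as the hom-distributor --- is canonically left representable, represented respectively by \( \rho^p(-,\oplax^q,-) \), \( \rho^p(-,\rho^q) \), \( \rho^p(-,\rho^q(-,\oplax^r,-)) \), \( \rho^p(-,\rho^q(-,\rho^r)) \) and \( \IdentityFunctor_{\Category M} \).

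First I would transport the data. Applying the bijection above to each structure morphism of a lax \( \Vdelta \)module structure extending \( \Category M^R \): the families \( R^p(-,\oplax^q,-) \to R^{p+q} \) and \( R^p(-,R^q) \to R^{p+q} \), together with the map \( \IdentityFunctor_{\Category M} \to R^0 \), correspond bijectively to families \( \rho^{p+q} \to \rho^p(-,\oplax^q,-) \) and \( \rho^{p+q} \to \rho^p(-,\rho^q) \), together with a map \( \rho^0 \to \IdentityFunctor_{\Category M} \) --- precisely the data of a \( \Category V \)\=/module structure extending \( \Category M^\rho \). The reversal of direction is exactly what turns the ``compositions'' and ``unit'' of a \( \Vdelta \)module into the ``decompositions'' and ``counit'' of a \( \Category V \)\=/module.

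Then I would transport the axioms. Each axiom of a lax \( \Vdelta \)module is the commutativity of a square of natural transformations assembled from the structure morphisms by whiskering (with the \( \rho^n \), or with the \( \oplax^q \)) and vertical composition, together with the coherence isomorphisms of \( \Cat_\deltaup \). Since the dictionary is a bijection on \( 2 \)\=/cells and, by the compatibility just noted, respects whiskering and those coherence isomorphisms, it sends each such square to the square obtained by reversing all arrows, which is precisely the corresponding \( \Category V \)\=/module axiom: Unitalities \( \leftrightarrow \) Counitalities, Parallel compositions \( \leftrightarrow \) Parallel decompositions, and Sequential compositions \( \leftrightarrow \) Sequential decompositions (the associativity-isomorphism instance on the distributor side degenerating to an equality of bracketings of the \( \rho \)'s on the functor side). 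As the dictionary is bijective, a family of squares commutes if and only if its image does, which yields the asserted bijection.

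The main obstacle is the coherence statement inside the first step: that the co-Yoneda identification of a composite of representable distributors with the composite of the representing functors is natural and compatible with the associator and unitors of \( \Cat_\deltaup \) and with whiskering on either side --- equivalently, that \( \Cat^{2\text{-}\mathrm{op}} \to \Cat_\deltaup \) restricts to a biequivalence onto the \( 1 \)\=/representable sub-bicategory \( \Cat_{\deltaup}^{\mathrm{1-rep}} \). Granting this, the matching of axioms is purely formal, and the only residual difficulty is bookkeeping the pervasive direction reversal uniformly over all arities \( p, q, r \geq -1 \).
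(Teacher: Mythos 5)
Your proposal is correct: the transport along chosen representations, using that \( \Cat^{2\text{-}\mathrm{op}} \to \Cat_\deltaup \) is locally fully faithful (Yoneda) and pseudofunctorial, so that composites and whiskerings of left representable distributors are represented by the corresponding composites of the \( \rho^n \) and \( \oplax^q \), with the direction reversal turning the lax \( \Vdelta \)module compositions/unit into the \( \Category V \)\=/module decompositions/counit, is exactly the routine verification intended here. The paper gives no argument at all (the proof is declared straightforward and omitted), so your write-up simply supplies the standard details the authors left out.
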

\begin{proof}
	Straightforward thus omitted.
\end{proof}

Considering \( \Vdelta \)modules as modules over a lax monoid
in \( \Cat_\deltaup \), one can naturally define
a 2\=/category of \( \Vdelta \)modules.
We shall however restrict
ourselves to a sub-bicategory of
\( \Vdelta\mathsf{mod}_\mathrm{oplax} \),
that we shall denote
\( \Vdelta\mathsf{mod}_\mathrm{oplax}^\mathrm{fun} \),
wherein the oplax morphisms of \( \Vdelta \)modules are taken to be
functors (instead of distributors)
equipped with some natural transformations defined
precisely below.

\begin{definition}[(Oplax functor between \( \Vdelta \)modules)]
\label{def:morphism_V-delta_mod}
	An oplax functor between two \( \Vdelta \)modules
	\( \Category M^R \) and \( \Category N^S \)
	is the data of:
	\begin{itemize}
		\item
			a functor
			\( F \From \Category M \longrightarrow \Category N \);
		\item
			natural transformations
			\[
				\begin{tikzcd}
					R^p \inout{\Collection a,x}{y}
					\ar[r, "f^p"]
					& S^p \inout{\Collection a, F(x)}{F(y)}
				\end{tikzcd}
			\]
			with components
			for any collection \( \Collection a \) of objects
			in \( \Category V \), pair of objects \( x, y \)
			in \( \Category M \) and integers \( p \geq 0 \),
	\end{itemize}
	such that the diagrams
	\[
		\begin{tikzcd}
			\Category M \inout{x}{y}
			\rar
			\ar[d, "F"]
			& R^0 \inout{x}{y}
			\ar[d, "f^0"] \\
			\Category N \inout{F(x)}{F(y)}
			\rar
			& S^0 \inout{F(x)}{F(y)}
		\end{tikzcd}
		\quad
		\begin{tikzcd}[sep=large]
			R^p \inout{-, \oplax^q, -, x}{y}
			\rar
			\ar[d, "f^p"]
			& R^{p+q} \inout{-, x}{y}
			\ar[d, "f^{p+q}"] \\
			S^p \inout{-, \oplax^q, -,F(x)}{F(y)}
			\rar
			& S^{p+q} \inout{-, F(x)}{F(y)}
		\end{tikzcd}
	\]
	and
	\[
		\begin{tikzcd}[sep=large]
			R^p(-, R^q) \inout {-,x} y
			\rar
			\dar["f^p \times f^q"]
			& R^{p+q} \inout{-, x}{y}
			\ar[dd, "f^{p+q}"] \\
			\Integral{m} S^p \inout{-, F(m)}{F(y)}
			\times S^q \inout{-, F(x)}{F(m)}
			\dar
			&
			\\
			S^p(-, S^q) \inout {-, F(x)} {F(y)}
			\rar
			& S^{p+q} \inout{-, F(x)}{F(y)}
		\end{tikzcd}
	\]
	commute whenever they make sense.
\end{definition}

\begin{definition}[(Natural transformation between oplax functors)]
	A natural transformation
	\( \alpha \From F \Rightarrow G \)
	between a couple of
	oplax functors
	\( F,G \From \Category M \to \Category N \),
	with structure \( f \) and \( g \) respectively,
	is a natural transformation between the underlying 
	functors such that the diagram
	\[
		\begin{tikzcd}[sep=large]
			R^p \inout{\Collection a,x}{y}
			\ar[r, "f^p"]
			\ar[d, "g^p"]
			& S^p \inout{\Collection a,F(x)}{F(y)}
			\ar[d, "{S^p\inout{-}{\alpha_y}}"] \\
			S^p \inout{\Collection a,G(x)}{G(y)}
			\ar[r, "S^p\inout{-,\alpha_x}{-}"]
			& S^p \inout{\Collection a,F(x)}{G(y)}
		\end{tikzcd}
	\]
	commutes for any \( p \geq 0 \).
\end{definition}

There is an evident 2-functor
\[
	\VMods
	\longrightarrow
	\Vdelta\mathsf{mod}^\mathrm{fun}_\mathrm{oplax}
\]
sending every \( \Category V \)-module \( \Category M^\rho \)
to the lax \( \Vdelta \)module with the same category \( \Category M \)
and structure maps \( \Category M \inout{\rho}{-} \).
Lax morphisms between \( \Category V \)-modules induce
oplax functors between the associated \( \Vdelta \)modules in the
obvious way and the same goes for the natural transformations.

\begin{proposition}
	The 2-functor above induces
	a 2-equivalence
	\[
		\VMods
		=
		\mathsf{left~rep.~}
		\Vdelta\mathsf{mod}^\mathrm{fun}_\mathrm{oplax}
	\]
	between the 2-category of \( \Category V \)-modules, lax
	morphisms and natural transformations, and the 2-category
	of left representable \( \Vdelta \)modules, oplax functors
	and natural transformations.
\end{proposition}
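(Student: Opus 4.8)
The plan is to show that the displayed 2-functor, corestricted to the full sub-2-category of left representable lax \( \Vdelta \)modules, is a biequivalence, by verifying the two standard conditions: it is essentially surjective on objects, and it induces an equivalence on every hom-category. First I would observe that the corestriction makes sense: if \( \Category M^\rho \) is a \( \Category V \)-module then the structure distributors of its image are the representable distributors \( \Category M \inout{\rho^n}{-} \), so the image does land among left representable \( \Vdelta \)modules; and this sub-2-category is full on 1- and 2-cells, since oplax functors between \( \Vdelta \)modules and their natural transformations do not see representability. The only tool really needed throughout is the (co)Yoneda lemma, together with the object-level comparison already recorded in Lemma~\ref{lemma:left-rep}.

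For essential surjectivity, let \( \Category M^R \) be a left representable lax \( \Vdelta \)module. By definition each \( R^n \From \Category V^n \times \Category M \nrightarrow \Category M \) admits a representation \( \rho^n \From \Category V^n \times \Category M \to \Category M \), i.e.\ an isomorphism \( R^n \cong \Category M \inout{\rho^n(-,-)}{-} \); fix one for every \( n \geq 0 \). By Lemma~\ref{lemma:left-rep} the lax \( \Vdelta \)module structure transports along these isomorphisms to a unique \( \Category V \)-module structure \( \rho \) on \( \Category M \), and the chosen isomorphisms assemble, by construction, into an isomorphism in \( \Vdelta\mathsf{mod}^{\mathrm{fun}}_{\mathrm{oplax}} \) between \( \Category M^R \) and the image of \( \Category M^\rho \).

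For the hom-categories, fix \( \Category V \)-modules \( \Category M^\rho \) and \( \Category N^\sigma \), with images \( \Category M^R \), \( \Category N^S \), where \( R^n = \Category M \inout{\rho^n}{-} \) and \( S^n = \Category N \inout{\sigma^n}{-} \). An oplax functor \( \Category M^R \to \Category N^S \) is a functor \( F \From \Category M \to \Category N \) together with natural transformations
\[
	f^p \From \Category M \inout{\rho^p(\Collection a, x)}{y}
	\longrightarrow \Category N \inout{\sigma^p(\Collection a, F(x))}{F(y)} .
\]
Keeping \( \Collection a \) and \( x \) fixed, this is a natural transformation of functors of \( y \) out of the representable \( \Category M \inout{\rho^p(\Collection a, x)}{-} \), hence by Yoneda is the same as a morphism \( \widetilde f^p_{\Collection a, x} \From \sigma^p(\Collection a, F(x)) \to F(\rho^p(\Collection a, x)) \); naturality of \( f^p \) in \( (\Collection a, x) \) is exactly naturality of \( \widetilde f^p \), so the family \( \{ f^p \} \) is equivalent to a family of natural transformations \( \sigma^p(-, F) \Rightarrow F \rho^p \), i.e.\ precisely the data of a lax morphism of \( \Category V \)-modules. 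One then checks that, under this dictionary, the three families of coherence diagrams in Definition~\ref{def:morphism_V-delta_mod} transpose to the three families of coherence diagrams in the definition of a lax morphism of \( \Category V \)-modules. The 2-cells match in the same way: the compatibility square defining a natural transformation of oplax functors of \( \Vdelta \)modules is, under Yoneda, literally the square defining a natural transformation of \( \Category V \)-module morphisms. Since \( F \) itself is unchanged and the transposition is compatible with vertical and horizontal composition and with identities, the 2-functor restricts to an equivalence—indeed an isomorphism—\( \VMods(\Category M^\rho, \Category N^\sigma) \cong \mathsf{left~rep.~}\Vdelta\mathsf{mod}^{\mathrm{fun}}_{\mathrm{oplax}}(\Category M^R, \Category N^S) \).

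Combining essential surjectivity on objects with local equivalence of hom-categories yields the claimed 2-equivalence. The main obstacle is purely bookkeeping: checking that the Yoneda transposition carries each coherence diagram of Definition~\ref{def:morphism_V-delta_mod} to the matching one for \( \Category V \)-module morphisms. The least automatic instance is the diagram involving the composite distributor \( R^p(-, R^q) \): one must unwind the coend description of distributor composition and invoke the co-Yoneda isomorphism \( \int^m \Category M \inout{\rho^p(\Collection a, m)}{y} \times \Category M \inout{\rho^q(\Collection b, x)}{m} \cong \Category M \inout{\rho^p(\Collection a, \rho^q(\Collection b, x))}{y} \), which is also what identifies \( R^p(-, R^q) \) as the representable distributor carried by \( \rho^p(-, \rho^q) \); after that identification the square over the integral becomes the expected one. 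The unit- and associativity-type diagrams implicit in the notion of a lax \( \Vdelta \)module versus a \( \Category V \)-module are handled by the same mechanism and are, in effect, already subsumed by Lemma~\ref{lemma:left-rep}.
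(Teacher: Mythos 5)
Your proof is correct and follows essentially the same route as the paper's: essential surjectivity comes from Lemma~\ref{lemma:left-rep}, and the comparison of hom-categories comes from the Yoneda transposition identifying the data \( \{f^p\} \) of an oplax functor with natural transformations \( \sigma^p(-,F) \Rightarrow F\rho^p \), under which the three coherence diagrams of Definition~\ref{def:morphism_V-delta_mod} match those of a lax \( \Category V \)\=/module morphism, and likewise for 2-cells. Your extra detail on the \( R^p(-,R^q) \) diagram, via the co-Yoneda identification of the composite of representable distributors with the representable carried by \( \rho^p(-,\rho^q) \), merely fills in a step the paper leaves implicit (only note that in the paper's conventions this coend is written \( \Integral{m} \) rather than with a superscript).
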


\begin{proof}
	The previous lemma relating left representable
	\( \Vdelta \)modules to \( \Category V \)\=/modules
	\UnskipRef{lemma:left-rep} implies
	essential surjectivity on objects. 
	
	Given a functor
	\( F \From \Category M^\rho \to \Category N^\sigma \)
	between two \( \Category V \)-modules, one has a bijection
	between
	natural transformations
	\( \sigma^p(-,F) \to F\,\rho^p \)
	and natural transformations
	\( \Category M \inout {\rho^p} -
	\to \Category N \inout {\sigma^p(-, F)} {F(-)} \).
	In one direction, this is given by the composition
	\[
		\begin{tikzcd}
			\Category M\inout{\rho^p}{-}
			\rar
			&
			\Category N \inout {F \rho^p} {F(-)}
			\rar
			& \Category N\inout{\sigma^p(-,F)}{F(-)}
		\end{tikzcd}
	\]
	and by the embedding lemma of Yoneda in the other direction.
	The three diagrams defining lax morphisms of
	\( \Category V \)-modules
	on one hand, and oplax functors between \( \Vdelta \)modules
	on the other hand, are in one-to-one correspondence under this bijection.
	Bijectivity on natural transformations is obtained in a similar way.
\end{proof}

\subsection{Categories enriched over \( \Category V \)}
\label{sec:Cat_V}

In this subsection, we introduce the notion of a category
enriched over an oplax monoidal category \( \Category V^\oplax \),
as well as the associated notions of enriched functors
and natural transformations.
An oplax
monoidal category is endowed with structure maps of arbitrarily
high arity which naturally leads one to consider composition
maps of arbitrary arity as well when defining an enrichment
structure.

There is also an abridged version of the definition of an enriched
category which is closer to the familiar definition of an enriched
category over a tensor category.
As we shall see, in the case of enrichment over an oplax monoidal
category, the two definitions are equivalent.

\begin{notation}
	Given three categories \( \Category C \),
	\( \Category V \) and \( \Category W \)
	and a bifunctor
	\( [-, -] \From \Category C\Op \times \Category C \to \Category V \),
	for every \( n \geq 0 \)
	and every functor \( F \From \Category V^n \to \Category W \)
	we shall use the notation
	\[
		{[-,\dots,-]}_F \coloneqq
		F(\EnrichedHom{-}{-}, \dots, \EnrichedHom{-}{-})
	\]
	for the functor
	\( (\Category C\Op \times \Category C)^n \to \Category W \)
	obtained by composing \( [-,-]^n \) with
	\( F \).
\end{notation}

\begin{definition}[(Categories enriched over \( \Category V \), extended version)]
\label{def:cat_V-extended}
    A category enriched over \( \Category V \) is
    the data of a category \( \Category C \)
    endowed with
    \begin{itemize}
    \item a bifunctor
    \[
		\begin{tikzcd}[sep=large]
			\EnrichedHom{-}{-} \From
			\Category C\Op \times \Category C
			\rar
			& \Category V
		\end{tikzcd}
	\]
	taking values in \( \Category V \);
	\item for every \( n \geq -1 \) and each
		\( (n+2) \)-tuple of objects \( x_0, \dots, x_{n+1} \)
		of \( \Category C \), transformations
		\[
			\begin{tikzcd}[column sep = large]
				\oplax^n(\EnrichedHom{x_n}{x_{n+1}},
				\dots,
				\EnrichedHom{x_0}{x_1})
				\rar["m^n"]
				& \EnrichedHom{x_{0}}{x_{n+1}}
			\end{tikzcd}
		\]
		which are natural in \( x_0 \) and \( x_{n+1} \),
		extranatural in \( x_1, \dots, x_n \), and
		which we shall abbreviate as
		\[
			[x_0, \dots, x_{n+1}]_{\oplax^n}
			\longrightarrow
			\EnrichedHom{x_{0}}{x_{n+1}}
		\]
		and collectively abbreviate as
		\[
			[-, \dots, -]_{\omega^n}
			\longrightarrow
			\EnrichedHom{-}{-}
		\]
	\end{itemize}

	It is required that:
	\begin{description}
		\item[Unitality]
			the morphism \( m^0 \) be the counit
			natural transformation of \( \Category V \);
		\item[Additivity]
			the diagram
			\[
				\begin{tikzcd}[sep = large]
					[-, \dots, -]_{\oplax^{p+q}}
					\ar[rd]
					\ar[ddd, "m^{p+q}" swap]
					\\
					[-3ex]
					&
					{[-, \dots, -]_{\oplax^p(-,\oplax^q,-)}}
					\dar["\oplax^p(-{,} m^q{,} -)"]
					\\
					[6ex]
					&
					{[-, \dots, -]_{\oplax^p}}
					\ar[dl, "m^p"]
					\\
					\EnrichedHom{-}{-}
				\end{tikzcd}
			\]
			commute whenever it makes sense.
	\end{description}
\end{definition}

Now let us introduce an abridged version of
the above definition, wherein one only requires
the existence of a binary composition and a unit
map.

\begin{definition}[(Category enriched over \( \Category V \), abridged version)]
	\label{def:oplax_enriched_abridged}
	A category \( \Category C \) enriched over \( \Category V \)
	is a category \( \Category C \) equipped with
	\begin{itemize}
    \item a bifunctor
    \[
		\begin{tikzcd}[sep=large]
			\EnrichedHom{-}{-} \From
			\Category C\Op \times \Category C
			\rar
			& \Category V
		\end{tikzcd}
	\]
	taking values in \( \Category V \);
	\item an extranatural transformation with components
	\[
		\begin{tikzcd}
			1_\oplax
			\rar["\eta_x"]
				& \EnrichedHom{x}{x}
		\end{tikzcd}
	\]
	for each \( x \in \Objects(\Category C) \),
	called the unit map;
	\item a transformation called the composition map, with components
	\[
		\begin{tikzcd}[column sep = large]
			\EnrichedHom{y}{z} \otimes_\oplax
			\EnrichedHom{x}{y}
			\rar["\mu_{x, y, z}"]
			& \EnrichedHom{x}{z}
		\end{tikzcd}
	\]
	for each triplet of objects \( x,y,z \in \Objects(\Category C) \)
	being natural in \( x \) and \( z \) while being
	extranatural in \( y \);
	\end{itemize}
	such that:
	\begin{description}
	\item[Associativity]
	the associativity diagram in \( \Category V \)
	\[
		\begin{tikzcd}[row sep = large]
			&[-15ex]
			\EnrichedHom y z \otimes_\oplax
			\EnrichedHom x y \otimes_\oplax
			\EnrichedHom w x
			\arrow[dl]
			\arrow[dr] &[-15ex]
			\\
			\big(\EnrichedHom y z \otimes_\oplax \EnrichedHom x y\big)
			\otimes_\oplax \EnrichedHom{w}{x}
			\arrow[d, "\mu_{x,y,z} \otimes_\oplax -"{left}]
			&&
			\EnrichedHom{y}{z} \otimes_\oplax
			\big(\EnrichedHom x y \otimes_\oplax \EnrichedHom w x\big)
			\arrow[d, "- \otimes_\oplax \mu_{w,x,y}"]
			\\
			\EnrichedHom x z \otimes_\oplax \EnrichedHom w x
			\arrow[dr, "\mu_{w,x,z}"{below left}]
			&& \EnrichedHom y z \otimes_\oplax \EnrichedHom w y
			\arrow[dl, "\mu_{w,y,z}"]
			\\
			& \EnrichedHom{w}{z} &
		\end{tikzcd}
	\]
	commutes for every \( w, x, y, z \in \Objects(\Category C) \);
	\item[Unitality]
		the two unitality diagrams in \( \Category V \)
		\[
			\begin{tikzcd}[column sep = small]
				& \OplaxHom{x,y}
				\ar[ddd]
				\ar[dl]
				\\[-3ex]
				1_\oplax \otimes_\oplax
				\EnrichedHom{x}{y}
				\dar["\eta_y \otimes_\oplax -" swap]
				\\[6ex]
				\EnrichedHom y y \otimes_\oplax \EnrichedHom x y
				\ar[dr, "\mu_{x, y, y}" swap]
				\\
				& \EnrichedHom{x}{y}
			\end{tikzcd}
			\qand
			\begin{tikzcd}[column sep = small]
				\OplaxHom{x,y}
				\ar[ddd]
				\ar[dr]
				\\[-3ex]
				& \EnrichedHom{x}{y}
				\otimes_\oplax 1_\oplax
				\dar["- \otimes_\oplax \eta_x"]
				\\[6ex]
				& \EnrichedHom x y \otimes_\oplax \EnrichedHom x x
				\ar[dl,"\mu_{x,x,y}"]
				\\
				\EnrichedHom{x}{y}
			\end{tikzcd}
		\]
		commute for every \( x, y \in \Objects(\Category C) \).
	\end{description}
\end{definition}

This definition is close to that of a category
enriched over a strong monoidal category, the main
differences being that we still require the composition
and unit maps to be (extra)natural, and that the associativity
and unitality condition be slightly modified due to the oplax
nature of \( \Category V^\oplax \).

\begin{proposition}
	\label{prop:truncation}
	The two above definitions of a \( \Category V \)-category
	are canonically isomorphic.
\end{proposition}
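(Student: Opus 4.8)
The plan is to exhibit mutually inverse constructions between the extended and abridged data, and then to check that the axioms on one side are equivalent to the axioms on the other. The key observation is that the abridged definition \UnskipRef{def:oplax_enriched_abridged} is precisely the ``arity \( \leq 1 \)'' truncation of the extended one \UnskipRef{def:cat_V-extended}: an extended enriched category restricts to an abridged one by setting \( \eta_x \coloneqq m^{-1} \) and \( \mu_{x,y,z} \coloneqq m^1 \), and discarding the higher \( m^n \). The associativity hexagon and the two unitality triangles of the abridged definition are exactly the instances \( A(1,1) \), \( A(1,-1) \) of the additivity diagram of the extended definition (together with \( m^0 = \) counit, which is imposed on both sides), so this direction is immediate.

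For the converse, I would reconstruct the higher composition maps from \( (\eta, \mu) = (m^{-1}, m^1) \) by running the argument of \UnskipRef{prop:truncation_lax_functors} in the present context. Concretely, fix an abridged enriched category \( \Category C \). For each \( n+2 \)-tuple of objects, the hom-objects and their tensor products assemble into functors, and the composition map \( \mu \) plays the role of \( l^1 \), the unit \( \eta \) the role of \( l^{-1} \). Define \( m^0 \) as the counit of \( \Category V^\oplax \), and define \( m^{q+1} \) inductively by requiring the additivity diagram \( A(1,q,0) \) — i.e. \( \oplax^{q+1}[-,\dots,-] \to [-,\dots,-]_{\oplax^1(-,\oplax^q,-)} \xrightarrow{\oplax^1(-,m^q,-)} [-,\dots,-]_{\oplax^1} \xrightarrow{\mu} [-,-] \) — to commute; this is possible and forced precisely because \( \oplax \) is oplax, so that \( \oplax^{q+1} \to \oplax^1(-,\oplax^q,-) \) exists. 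One then shows, by the same double induction on \( (p,q) \) as in the proof of \UnskipRef{prop:truncation_lax_functors}, that all the additivity diagrams \( A(p,q,i) \) commute: the base cases \( \Induction(0,q) \), \( \Induction(1,-1) \), \( \Induction(1,0) \) use the unitality triangles and the counit axioms of \( \oplax \); the case \( \Induction(1,1) \) is the associativity hexagon; the inductive step \( \Induction(1,q) \Rightarrow \Induction(1,q+1) \) and then \( \Induction(p,q) \Rightarrow \Induction(p+1,q) \) use sequential and parallel decomposition in \( \Category V^\oplax \) together with the hexagon, verbatim as in that proof. Naturality and extranaturality of the \( m^n \) follow from those of \( \mu \), \( \eta \), and the structural transformations of \( \oplax \).

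Finally I would check that the two passages are mutually inverse: starting from extended data, truncating and re-expanding gives back the original \( m^n \) by the uniqueness half of the inductive construction (each \( m^{q+1} \) is determined by \( A(1,q,0) \), which held for the original data); starting from abridged data, expanding and truncating returns \( (\eta,\mu) \) on the nose. Both constructions are manifestly compatible with the enrichment bifunctor \( [-,-] \), so they are ``canonically'' isomorphic in the sense that the underlying category \( \Category C \) and the bifunctor are literally unchanged, only the collection of composition maps being rearranged. The main obstacle is bookkeeping: the induction in \UnskipRef{prop:truncation_lax_functors} is phrased for lax monoidal functors \( F\From\Category U^\lax\to\Category V^\oplax \), and one must recast it with \( F \) replaced by the bifunctor \( [-,-]\From\Category C\Op\times\Category C\to\Category V \) and the ``source'' lax structure replaced by the (trivial) reindexing that lets one concatenate strings of hom-objects — i.e. one is really applying the reconstruction theorem to a lax monoidal functor out of a suitable ``path category'' of \( \Category C \). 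Making that identification precise, so that the cited proof applies without change, is the only genuinely delicate point; everything else is a routine, if lengthy, diagram chase.
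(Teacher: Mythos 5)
Your proposal is correct and matches the paper's proof, which is exactly the strategy you describe: truncation is immediate, and the higher composition maps are reconstructed by the same double induction as in the reconstruction theorem for lax monoidal functors \UnskipRef{prop:truncation_lax_functors}, only with smaller diagrams since the target of each \( m^n \) is a single hom-object. The ``path category'' identification you flag as delicate is not needed: the paper simply reruns the inductive diagram chase directly on the composition maps rather than invoking the cited theorem as a black box.
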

\begin{proof}
	The proof of the equivalence between the extended and the
	abridged version can be done by induction and is very
	similar to the one of the reconstruction of
	the lax structure of a functor
	\UnskipRef{prop:truncation_lax_functors},
	apart from the fact
	that the diagrams involved are a bit smaller.
\end{proof}

\begin{definition}[(\( \Category V \)\=/enriched functor, extended version)]
	\label{def: V-functor, extended}
	Let \( (\Category B, [-,-], \{l^n\}_{n \geq -1}) \)
	and \( (\Category C, \langle -, -\rangle, \{m^n\}_{n \geq -1}) \)
	be two categories enriched over \( \Category V \).
	A \( \Category V \)\=/enriched functor between 
	\( \Category B \) and \( \Category C \) is
	the data of a functor
	\( F \From \Category B \to \Category C \) equipped
	with natural transformations with components
	\[
		\begin{tikzcd}[column sep = huge]
			\EnrichedHom{x}{y}
			\rar["F_{x,y}"]
				&
				\langle F(x), F(y) \rangle
		\end{tikzcd}
	\]
	for each pair of objects \( x, y \) of \( \Category B \),
	such that the diagrams
	\[
		\begin{tikzcd}[column sep = 120]
			\OplaxHom{x_0, \dots, x_{n+1}}
			\rar["{\oplax^n\left(F_{x_n, x_{n+1}}, \dots,
			F_{x_0, x_1}\right)}"]
			\dar["l^n" swap]
				& \big\langle
					F(x_0), \dots, F(x_{n+1})
				\big\rangle_\oplax
				\dar["m^n"]
			\\
			\EnrichedHom{x_0}{x_{n+1}}
			\rar["F_{x_0, x_{n+1}}"]
				& \big\langle F(x_0), F(x_{n+1})
				\big\rangle
		\end{tikzcd}
	\]
	commute for all integers \( n \geq -1 \)
	and all sequences of objects \( x_0, \dots, x_{n+1} \)
	of \( \Category B \).
\end{definition}

Similarly as for categories enriched over \( \Category V \),
the notion of \( \Category V \)\=/enriched
functor admits an abridged version,
involving only two compatibility diagrams. 

\begin{definition}[(\( \Category V \)\=/enriched functor, abridged version)]
	\label{definition: V-enriched functor abridged}
	A \( \Category V \)\=/enriched functor
	between two categories  \( (\Category B, [-,-], \eta, \mu) \)
	and \( (\Category C, \langle -, -\rangle, \theta, \nu) \)
	enriched over \( \Category V \) is a functor
	\( F \From \Category B \to \Category C \)
	equipped with natural transformations with components
	\[
		\begin{tikzcd}[column sep = huge]
			\EnrichedHom{x}{y}
			\rar["F_{x,y}"]
				&
				\langle F(x), F(y) \rangle
		\end{tikzcd}
	\]
	for each pair of objects \( x, y \) of \( \Category B \),
	such that:
	\begin{description}
		\item[unit]
			the unit diagram in \( \Category V \)
			\[
				\begin{tikzcd}[column sep=huge, row sep=large]
						1_\oplax
						\ar[d, "{\eta_x}",swap]
					\ar[rd,"{\theta_{F(x)}}"] &
					\\
					\EnrichedHom{x}{x}
					\ar[r, "F_{x,x}"]
						& \langle F(x), F(x) \rangle
				\end{tikzcd}
			\]
			commutes for every \( x \in \Objects(\Category B) \);
		\item[composition]
			the composition diagram in \( \Category V \)
			\[
				\begin{tikzcd}[column sep = 70, row sep = large]
					\EnrichedHom{y}{z}
					\otimes_\oplax
					\EnrichedHom{x}{y}
					\dar["{\mu_{x,y,z}}" swap]
					\rar["F_{y,z} \otimes_\oplax F_{x, y}"]
						& \langle F(y), F(z) \rangle
						\otimes_\oplax
						\langle F(x), F(y) \rangle
						\dar["{\nu_{F(x), F(y), F(z)}}"]
					\\
					\EnrichedHom{x}{z}
					\rar["F_{x,z}"]
						& \langle F(x), F(z) \rangle
				\end{tikzcd}
			\]
			commutes for every \( x, y, z \in \Objects(\Category B) \).
	\end{description}
\end{definition}

\begin{proposition}
	The two above definitions of \( \Category V \)\=/functors
	are canonically equivalent.
\end{proposition}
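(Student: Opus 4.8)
The plan is to exploit that the two definitions attach \emph{the same underlying data} to a \( \Category V \)-functor, namely a functor \( F \From \Category B \to \Category C \) together with natural transformations \( F_{x,y} \From \EnrichedHom x y \to \langle F(x), F(y)\rangle \) for each pair of objects of \( \Category B \). Since the extended and abridged enriched structures on \( \Category B \) and on \( \Category C \) have already been matched canonically \UnskipRef{prop:truncation} --- so that \( l^{-1} = \eta \), \( l^1 = \mu \), \( m^{-1} = \theta \) and \( m^1 = \nu \) --- it suffices to show that, on this fixed data, the two axiom systems are equivalent. Write \( C(n) \), for \( n \geq -1 \), for the compatibility diagram of Definition~\ref{def: V-functor, extended} indexed by \( n \). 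Instantiating \( C(-1) \) recovers the unit diagram of Definition~\ref{definition: V-enriched functor abridged}, and instantiating \( C(1) \) recovers its composition diagram; hence an extended \( \Category V \)-functor is in particular an abridged one. The substance of the argument is the converse implication: from \( C(-1) \) and \( C(1) \) alone, deduce \( C(n) \) for every \( n \).

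I would prove this by induction on \( n \), in close parallel with the reconstruction theorem for lax monoidal functors \UnskipRef{prop:truncation_lax_functors} and with its enriched counterpart for the objects \UnskipRef{prop:truncation}. The cases \( n = -1 \) and \( n = 1 \) are the hypotheses. The case \( n = 0 \) holds because, by the Unitality clause of Definition~\ref{def:cat_V-extended}, both \( l^0 \) and \( m^0 \) are the counit \( \oplax^0 \Rightarrow \IdentityFunctor_{\Category V} \), so that \( C(0) \) is pasted together from a naturality square of this counit and one of \( F_{x,y} \). For \( n \geq 2 \), assume \( C(k) \) for all \( -1 \leq k < n \). Using the Additivity clause of Definition~\ref{def:cat_V-extended} with \( p = 1 \), \( q = n-1 \), one writes the \( n \)-ary composition \( l^n \) of \( \Category B \) as the associator \( \oplax^n \Rightarrow \oplax^1(-, \oplax^{n-1}, -) \) of \( \Category V^\oplax \) (evaluated at the hom-objects of \( \Category B \)), followed by \( \oplax^1(-, l^{n-1}, -) \), followed by the binary composition \( l^1 = \mu \), and likewise for \( m^n \) in \( \Category C \). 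Then I would paste into a single diagram with boundary \( C(n) \): the naturality square of that associator applied to the morphisms \( F_{x_i, x_{i+1}} \); a bifunctoriality square of \( \otimes_\oplax = \oplax^1 \) merged with the instance of \( C(n-1) \) for the truncated string of hom-objects; and the instance of \( C(1) \) for the outermost binary composition. Each face commutes for an evident reason --- associator naturality, bifunctoriality of \( \otimes_\oplax \), and the two inductive instances --- and, by the Additivity clause just invoked, the two edges of this pasting recompose to the two legs of \( C(n) \); hence \( C(n) \) commutes, completing the induction.

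I expect the main obstacle to be purely organisational: laying out the pasting of the inductive step so that every face is manifestly one of the commuting squares listed, and checking that the edges really do recompose to \( l^n \) and \( m^n \) --- which is exactly what the Additivity clause of Definition~\ref{def:cat_V-extended} delivers. No idea beyond those in \UnskipRef{prop:truncation_lax_functors} should be needed; in fact the diagrams here are even smaller, because a string of composable hom-objects \( \EnrichedHom{x_0}{x_1}, \dots, \EnrichedHom{x_n}{x_{n+1}} \) leaves no freedom in the choice of `insertion position', so the single family \( \{C(n)\}_{n \geq -1} \) replaces the doubly-indexed family \( A(p,q,i) \) of the earlier proof. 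Finally, since the passage between the two definitions is the identity on the underlying data \( (F, \{F_{x,y}\}) \), the resulting equivalence is canonical and manifestly compatible with composition of \( \Category V \)-functors and with identities.
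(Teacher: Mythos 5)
Your proof is correct and follows essentially the same route as the paper: an induction on the arity \( n \), with \( n=-1,1 \) given by hypothesis, \( n=0 \) by (co)unitality/naturality of the counit of \( \oplax \), and the inductive step obtained by decomposing \( l^n \) and \( m^n \) via additivity into a binary composition after a lower-arity one and pasting the corresponding naturality/functoriality squares with the instances \( C(n-1) \) and \( C(1) \). No discrepancy worth noting.
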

\begin{proof}
	Let us denote \( \Induction(n) \) the hypothesis that the
	\( n \)-th diagram commutes with \( n \geq -1 \).
	The hypotheses \( \Induction(-1) \) and \( \Induction(1) \) are satisfied 
	due to the unit and composition diagrams, respectively, while 
	\( \Induction(0) \) holds by naturality of the counit of \(\oplax\).
	From the additivity of the structure maps \( m^n \), a simple
	diagram shows that, by writing \( m^{n+1} \) as \( m^1 \)
	followed by \( m^n \otimes_\oplax - \),
	and symmetrically for \( l^{n+1} \), one has
	\( \Induction(n) \wedge \Induction(1) \implies \Induction(n+1) \).
\end{proof}

\begin{definition}[(Natural \( \Category V \)\=/enriched transformations,
	extended version)]
	\label{def:enriched_nat_transfo}
	Let \( (\Category B, [-,-], \{l^m\}_{n \geq -1}) \)
	and 
	\( (\Category C, \langle -, - \rangle, \{m^n\}_{n \geq -1}) \)
	be two categories enriched over \( \Category V \).
	Let furthermore \( F,G \From \Category B \to\Category C \)
	be two \( \Category V \)\=/enriched functors.
	A natural \( \Category V \)\=/enriched transformation
	between \( F \) and \( G \) is a natural transformation
	\( \alpha \From F \Rightarrow G \) such that the diagram
	\[
		\begin{tikzcd}[row sep = large]
			& \OplaxHom{x_0, \dots, x_{n+1}}
			\ar[dl,"{\oplax^n\left(F_{x_n,x_{n+1}},
				\dots, F_{x_0,x_1}\right)}" swap]
			\ar[dr,"{\oplax^n\left(G_{x_n,x_{n+1}},
				\dots, G_{x_0,x_1}\right)}"]
			&
			\\
				\big\langle F(x_0), \dots, F(x_{n+1}) \big\rangle_\oplax
				\dar["{\left\langle -,
					\dots, \alpha_{x_{n+1}} \right\rangle_\oplax}" swap]
					& & \big\langle G(x_0), \dots, G(x_{n+1}) \big\rangle_\oplax
					\dar["{\left\langle \alpha_{x_0},
						\dots, - \right\rangle_\oplax}"]
			\\
				\big\langle F(x_0), \dots, G(x_{n+1}) \big\rangle_\oplax
				\ar[dr, "{m}^n" swap]
					& & \big\langle F(x_0), \dots, G(x_{n+1}) \big\rangle_\oplax
					\ar[dl, "{m}^n"]
				\\
				  & \big\langle F(x_0), G(x_{n+1}) \big\rangle_\oplax &
		\end{tikzcd}
	\]
	commutes in \( \Category V \), for every \( n \geq 0 \)
	and every \( x_0, \dots, x_{n+1} \in \Objects(\Category B) \).
\end{definition}

Once again, we can consider an abridged version
of the previous definition allowing to reexpress 
the above compatibility diagram in the usual square form.

\begin{definition}[(Natural \( \Category V \)\=/enriched transformations,
	abridged version)]
	Let \( F, G \From \Category B \to \Category C \)
	be two \( \Category V \)\=/enriched functors between
	two categories \( (\Category B, [-,-],\eta,\mu) \)
	and \( (\Category C, \langle-,-\rangle, \theta, \nu) \)
	enriched over \( \Category V \).
	A natural \( \Category V \)\=/enriched transformation
	between \( F \) and \( G \) is a natural transformation
	\( \alpha \From F \Rightarrow G \) such that the diagram
	\[
		\begin{tikzcd}[sep=large]
			\EnrichedHom{x}{y}
			\arrow[r, "F_{x,y}"]
			\arrow[d, "G_{x,y}" swap]
			& \langle F(x), F(y) \rangle
			\arrow[d, "{\langle -, \alpha_y \rangle}"] \\
			\langle G(x), G(y) \rangle
			\arrow[r, "{\langle \alpha_x, - \rangle}"]
			& \langle F(x), G(y) \rangle
		\end{tikzcd}
	\]
	commutes for every pair \( x, y \in\Objects(\Category B) \).
\end{definition}

\begin{proposition}
	The two above definitions of natural
	\( \Category V \)\=/enriched transformations are canonically equivalent.
\end{proposition}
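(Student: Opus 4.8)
The plan is to run the same kind of arity induction that underlies Proposition~\ref{prop:truncation_lax_functors} and the two preceding equivalences (for categories enriched over \( \Category V \) and for \( \Category V \)-functors). Write \( \Induction(n) \), for \( n \geq 0 \), for the statement that the hexagon of Definition~\ref{def:enriched_nat_transfo} commutes for that \( n \) and for all tuples of objects of \( \Category B \). Since the underlying datum of both notions is one and the same plain natural transformation \( \alpha \From F \Rightarrow G \), it suffices to show that the abridged square is equivalent to the simultaneous validity of all the \( \Induction(n) \).

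I would first dispatch \( \Induction(0) \): since \( m^0 \) is the counit \( \oplax^0 \Rightarrow \IdentityFunctor_{\Category V} \), naturality of that counit rewrites the two legs of the \( n=0 \) hexagon as the two legs of the abridged square precomposed with \( m^0_{\EnrichedHom x y} \), so \( \Induction(0) \) follows from the square. Next \( \Induction(1) \): applying the abridged square first to the pair \( (x_1,x_2) \) and then to \( (x_0,x_1) \), absorbing it into the two tensor factors by functoriality of \( \otimes_\oplax \), and transporting \( \alpha_{x_1} \) from one factor to the other by the extranaturality of the composition map of \( \Category C \) in its middle variable, one obtains exactly \( \Induction(1) \); this invokes no axiom of the enriched structure of \( \Category C \) beyond that extranaturality. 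For the inductive step, assuming \( \Induction(n) \), I would expand the \( (n+1) \)-ary hexagon by writing the composition \( m^{n+1} \) of \( \Category C \) as \( m^1 \) followed by \( m^n \otimes_\oplax - \) (additivity of the composition maps), and dually decomposing the top object \( \OplaxHom{x_0,\dots,x_{n+2}} \) and the maps \( l^n \) of \( \Category B \) through \( \oplax^{n+1} \Rightarrow \oplax^1(-,\oplax^n) \); the resulting large diagram then breaks up into \( \Induction(n) \), \( \Induction(1) \), two naturality cells of the decomposition transformations, and one functoriality cell of \( \otimes_\oplax \), yielding \( \Induction(n) \wedge \Induction(1) \implies \Induction(n+1) \). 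Hence the abridged square implies every \( \Induction(n) \).

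Conversely, \( \Induction(0) \) already says that the two legs of the abridged square agree after precomposition with the counit \( m^0_{\EnrichedHom x y} \From \oplax^0(\EnrichedHom x y) \to \EnrichedHom x y \), and one reads off the square itself whenever this counit is componentwise epimorphic — in particular whenever \( \Category V \) is normal. I expect the main obstacle to be organisational: arranging the \( (n+1) \)-ary diagram so that it visibly factors through \( \Induction(n) \) and \( \Induction(1) \), which amounts to choosing compatibly, on the \( F \)-leg and the \( G \)-leg, between the two decompositions \( \oplax^{n+1} \to \oplax^1(-,\oplax^n) \) and \( \oplax^{n+1} \to \oplax^n(-,\oplax^1) \), so that the \( \alpha \)-bearing hom-object is peeled off on each side and the extranaturality of the composition of \( \Category C \) used for \( \Induction(1) \) can be applied. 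The only genuinely delicate point of principle is the last step of the converse — passing from ``the square composed with the counit commutes'' to ``the square commutes'' — which is transparent in the normal case but requires a word of care for general \( \Category V \).
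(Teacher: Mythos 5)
Your treatment of the direction ``abridged \( \Rightarrow \) extended'' is correct, but it takes a heavier route than the paper's. The paper does not induct on arity: for each fixed \( n \) it rewrites the hexagon of the extended definition [\ref{def:enriched_nat_transfo}] directly, using the naturality of \( m^n \) (to slide \( \langle -, \alpha_{x_{n+1}}\rangle \), resp.\ \( \langle \alpha_{x_0}, -\rangle \), past the composition \( m^n \) of \( \Category C \)) together with the compatibility of \( F \) and \( G \) with \( l^n \) and \( m^n \) (the extended functor axiom, already known to be equivalent to the abridged one), so that the hexagon splits into two functor squares and one copy of the abridged naturality square; no decomposition of \( \oplax^{n+1} \) and no induction hypothesis are needed. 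Your scheme --- prove \( \Induction(1) \) from the square plus extranaturality of the composition of \( \Category C \), then peel one tensor factor off \( m^{n+1} \) by additivity --- does work (indeed, if you carry out your inductive step you will find you are invoking the \( n \)-ary functor axiom and the naturality of \( m^n \) anyway, i.e.\ reproducing the paper's direct argument step by step), so this is a legitimate variant, closer in spirit to the reconstruction proof [\ref{prop:truncation_lax_functors}], just with more bookkeeping than necessary.

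On the direction ``extended \( \Rightarrow \) abridged'' your hesitation is exactly right, and this is where your proposal stops short of a complete proof of the statement as written. Since \( m^0 \) is the counit, the \( n=0 \) hexagon only says that the two legs of the abridged square agree after precomposition with the counit \( \OplaxHom{x,y} = \oplax^0(\EnrichedHom{x}{y}) \to \EnrichedHom{x}{y} \), and extracting the square itself requires this counit to be componentwise epimorphic --- automatic when \( \Category V \) is normal, but not in general: in a degenerate comonad twist where \( \oplax^0 \) is constant at an initial object, every hexagon commutes vacuously while the abridged square is a genuine condition, so the implication cannot be read off from the hexagons alone. Be aware, however, that the paper's own proof of this direction consists of the single sentence that it ``follows straightforwardly from the specialisation of the defining diagram to \( n=0 \)'', i.e.\ it silently performs the very step you flag. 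So the gap you leave is real relative to the unqualified statement, but it is a gap you have correctly diagnosed rather than an idea you are missing, and under the normality hypothesis in force for the neighbouring comparison results your argument is complete.
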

\begin{proof}
	The fact that the extended version implies the abridged version follows
	straightforwardly from the specialisation of the
	defining diagram to \( n=0 \). In order to prove the converse implication,
	we use the naturality of \( m^n \) and \( \alpha \) to rewrite
	the defining diagram \UnskipRef{def:enriched_nat_transfo} as
	\[
		\begin{tikzcd}[row sep=large]
			& \OplaxHom{x_0, \dots, x_{n+1}}
			\arrow[dd, "{l}^n"]
			\arrow[dl, swap,"{\oplax^n\left(F_{x_n,x_{n+1}}, \dots, F_{x_0,x_1}\right)}"]
			\arrow[dr, "{\oplax^n\left(G_{x_n,x_{n+1}}, \dots, G_{x_0,x_1}\right)}"]&
			\\
			\langle F(x_0), \dots, F(x_{n+1}) \rangle_\oplax
			\arrow[dd, "{m}^n" swap]
			 &&
			\langle G(x_0), \dots, G(x_{n+1}) \rangle_\oplax
			\arrow[dd, "{m}^n"] \\
			& \OplaxHom{x_0, x_{n+1}}
			\ar[dr, "\oplax^0\left(G_{x_0,x_{n+1}}\right)" swap]
			\ar[dl,swap, "\oplax^0\left(F_{x_0,x_{n+1}}\right)" swap]
			&
			\\
			\langle F(x_0), F(x_{n+1}) \rangle_\oplax
			\arrow[dr,swap, "{\left\langle -, \alpha_{x_{n+1}} \right\rangle_\oplax}"]
			&& \langle G(x_0), G(x_{n+1}) \rangle_\oplax
			\arrow[dl, "{\left\langle \alpha_{x_0}, -\right\rangle_\oplax}"]
			\\
			& \langle F(x_0), G(x_{n+1}) \rangle_\oplax
			&
		\end{tikzcd}
	\]
	so that the top squares commute by virtue of
	the property of \( \Category V \)\=/enriched
	functors while the bottom square is the abridged
	\( \Category V \)\=/enriched naturality condition.
\end{proof}

\subsection{Categories enriched over \( \Category V \)
vs \( \Vdelta \)modules}
\label{sec:right-rep}

Having previously compared \( \Category V \)\=/modules
with \( \Vdelta \)modules, we shall now compare categories
enriched over \( \Category V \) with \( \Vdelta \)modules,
and shall show that the former identify with a subclass of
the latter (obeying a certain representability condition).
First, let us start by showing that any category
enriched over \( \Category V \) induces a \( \Vdelta \)module.

\begin{proposition}\label{prop:Cat_V-to-Vdelta}
	Any category \( \Category M \) enriched 
	over \( \Category V \) defines a \( \Vdelta \)module
	with structure maps
	\[
		R^{n+1} \inout {\Collection a, x}{y}\, \coloneqq\,
		\Category V \inout{\oplax^n(\Collection a)}{\EnrichedHom{x}{y}}
	\]
	for any sequence \( \Collection a \) of \( n+1 \) objects
	of \( \Category V \), pair \( x, y \) of objects of \( \Category M \)
	and integer \( n \geq -1 \).
\end{proposition}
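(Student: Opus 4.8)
The plan is to write down the \( \Vdelta \)module data explicitly and then check the axioms, the latter being routine but lengthy. It is cleanest to route the construction through the multicategory \( \Category V_0 \) associated with \( \Category V^\oplax \) \SectionRef{sec:multi}, whose operations are \( \Category V_0\inout{\bc}{b} = \Category V\inout{\oplax(\bc)}{b} \): with this notation the prescribed distributors read \( R^{n+1}\inout{\bc,x}{y} = \Category V_0\inout{\bc}{\EnrichedHom{x}{y}} \) for \( \bc \) a sequence of \( n+1 \) objects of \( \Category V \) (so that \( R^0\inout{x}{y} = \Category V\inout{1_\oplax}{\EnrichedHom{x}{y}} \)). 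The first step is to observe that each \( R^n \) is indeed a distributor \( \Category V^n\times\Category M \nrightarrow \Category M \): it is contravariant in \( x \) and covariant in \( y \) through the bifunctor \( \EnrichedHom{-}{-} \), contravariant in the arguments taken in \( \Category V \) through the functoriality of \( \oplax^{n-1} \) (equivalently, of \( \Category V_0 \) in its inputs), and functorial in morphisms of \( \Category M \) again through \( \EnrichedHom{-}{-} \).

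Next I would exhibit the three families of structure transformations, all obtained from operadic composition in \( \Category V_0 \) together with the composition and unit maps of the enrichment. The transformation \( R^p(-,\oplax^q,-)\to R^{p+q} \) is operadic composition with the identity operation of \( \oplax^q \), viewed as an element of \( \Category V_0 \), plugged into the relevant input; equivalently it is precomposition with the associator \( \oplax^{(p+q)-1}\to \oplax^{p-1}(-,\oplax^q,-) \) of \( \Category V^\oplax \), which is exactly how composition in \( \Category V_0 \) was defined. For \( R^p(-,R^q)\to R^{p+q} \), unfolding the composite distributor one sees that a point of the source over \( (\ba,\bb,x;y) \) is represented by a pair \( f\in R^p\inout{\ba,m}{y} \), \( g\in R^q\inout{\bb,x}{m} \) for some \( m\in\Objects(\Category M) \); I would send it to the operadic composite, in \( \Category V_0 \), of \( \mu_{x,m,y}\in\Category V_0\inout{\EnrichedHom{m}{y},\EnrichedHom{x}{m}}{\EnrichedHom{x}{y}} \) with \( f \) in the first input and \( g \) in the second. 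Here one must check that this assignment is extranatural in \( m \)---using the extranaturality of \( \mu \) in its middle variable, together with functoriality of \( \EnrichedHom{-}{-} \)---so that it descends to the coend \( \Integral{m}R^p\inout{\ba,m}{y}\times R^q\inout{\bb,x}{m} \). Finally \( \IdentityFunctor_{\Category M}\to R^0 \) sends a morphism \( h\From x\to y \) of \( \Category M \) to \( \EnrichedHom{x}{h}\circ\eta_x\in\Category V\inout{1_\oplax}{\EnrichedHom{x}{y}}=R^0\inout{x}{y} \), where \( \EnrichedHom{x}{h} \) denotes the morphism \( \EnrichedHom{x}{x}\to\EnrichedHom{x}{y} \) induced by \( h \) in the second variable.

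It then remains to verify the unitality, parallel composition and sequential composition axioms. The unitality axioms reduce to the counitality axioms of \( \Category V^\oplax \) together with the two unitality triangles relating \( \mu \) and \( \eta \) \UnskipRef{def:oplax_enriched_abridged}. The parallel composition axioms, and the purely oplax sequential axiom (the square involving only \( \oplax^q \) and \( \oplax^r \) nested inside \( R^p \)), follow at once from the parallel, respectively sequential, decomposition axioms of \( \Category V^\oplax \)---equivalently, from the coherence of iterated operadic composition in \( \Category V_0 \). The remaining sequential axioms---those involving \( R^q \) and \( R^r \)---encode the associativity of the enriched composition, and I would deduce them from the associativity hexagon of \UnskipRef{def:oplax_enriched_abridged}, pushed through the relevant associators of \( \Category V^\oplax \), together with a Fubini interchange for the double coend occurring in \( R^p(-,R^q(-,R^r)) \).

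The step I expect to be the main obstacle is precisely this last one: reconciling the \( \Vdelta \)module sequential axioms with the enriched associativity requires threading several associators of \( \Category V^\oplax \) through the composition maps simultaneously while keeping the coend bookkeeping straight. To make this manageable I would carry out the verification using the extended description \UnskipRef{def:cat_V-extended} of the enrichment, where the higher composition maps \( m^n \) are available and associativity is phrased additively, so that the required coherence becomes transparent, and then invoke \UnskipRef{prop:truncation} to transport the conclusion back to the abridged data. Everything else is formal diagram chasing, which I would only sketch.
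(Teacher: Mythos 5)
Your proposal is correct and follows essentially the same route as the paper: the same three structure transformations (precomposition with the associators of \( \oplax \), the \( \mu \)-based map descended through the coend by extranaturality in \( m \), and \( h \mapsto \EnrichedHom{x}{h}\circ\eta_x \) for the unit), with the axioms verified by the same bookkeeping (counitality and the enrichment unitality triangles for the unit axioms, the parallel/sequential decompositions of \( \oplax \) for the purely oplax ones, and associativity of \( \mu \) plus coend calculus for the rest). Phrasing the construction through the multicategory \( \Category V_0 \) and the extended definition of the enrichment is only a notational repackaging of the paper's argument, not a different proof.
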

\begin{proof}
	First, we need to construct the associators, 
	\ie the natural transformations
	\[
		R^p (-,\oplax^q,-) \to R^{p+q},
		\quad
		R^p \circ (-,R^q) \to R^{p+q}
		\qand
		\IdentityFunctor_{\Category M} \to R^0
	\]
	The first type of associators is
	induced from those of the oplax structure \( \oplax \).
	The second type of associators is obtained from
	the composite
	\[
		\Category V \inout {\oplax^{p-1}(\Collection a)}{\EnrichedHom{m}{y}}
		\times \Category V \inout {\oplax^{q-1}(\Collection b)}{\EnrichedHom{x}{m}}
		\to
		\Category V \inout {\oplax^{p-1}(\Collection a)\,  \otimes_\oplax \oplax^{q-1}(\Collection b)}{\EnrichedHom{m}{y}\,  \otimes_\oplax \EnrichedHom{x}{m}}
		\to
		\Category V \inout {\oplax^{p+q-1}(\Collection a \oplus \Collection b)}{\EnrichedHom{m}{y}\,  \otimes_\oplax \EnrichedHom{x}{m}}
		\to
		\Category V \inout {\oplax^{p+q-1}(\Collection a \oplus \Collection b)}{\EnrichedHom{x}{y}}
	\]
	where \( \Collection a \) and \( \Collection b \)
	are sequences of respectively \( p \) and \( q \)
	objects of \( \Category V \) (with \(p,q \geq 0 \)),
	and \( m, x, y \) are objects of \( \Category M \).
	The first map is simply given by the tensor product
	of morphisms in \( \Category V \), the second arrow
	is induced by either of the compositions
	\[
		\begin{tikzcd}[column sep = 70]
			\oplax^{p+q-1}
			\arrow[r]
			\arrow[d]
				&\oplax^{p}(-, \oplax^{q-1})
				\arrow[d] \\
			\oplax^{q}(\oplax^{p-1}, -)
			\arrow[r]
				& \oplax^{p-1}\otimes_\oplax\oplax^{q-1}
		\end{tikzcd}
	\]
	which are equivalent by parallel decomposition, 
	and the last arrow from the composition map
	of \( \Category M \).
	The total composite map
	is extranatural in \( m \), due to the fact that
	\( \EnrichedHom{-}{-} \) is a bifunctor and that \( \mu \)
	is extranatural in \( m \). Hence
	the universal property of integrals ensures
	the existence of natural transformations
	\[
		\Integral{m}
		\Category V \inout {\oplax^{p-1}(\ba)}{\EnrichedHom{m}{y}}
		\times \Category V \inout {\oplax^{q-1}(\bb)}{\EnrichedHom{x}{m}}
		\to \Category V \inout {\oplax^{p+q-1}(\ba \oplus \bb)}{\EnrichedHom{x}{y}}
	\]
	for all \( p, q \geq 0 \) which play the role of associators 
	\( R^p \circ (-,R^q) \to R^{p+q} \).

	The last associator,
	\ie the unit of the \( \Vdelta \)module structure,
	\[
		\Category M \inout{x}{y}
		\to \Category V \inout{1_\oplax}{\EnrichedHom{x}{y}}\,,
	\]
	is simply given by
	\[
		\begin{tikzcd}
			1_\oplax
			\ar[d, "\eta_y"]
			\ar[r, "\eta_x"]
			& \EnrichedHom{x}{x}
			\ar[d, "\EnrichedHom{-}{f}"] \\
			\EnrichedHom{y}{y}
			\ar[r, "\EnrichedHom{f}{-}"]
			& \EnrichedHom{x}{y}
		\end{tikzcd}
	\]
	for any morphism \( f \From x \to y \), 
	and which commutes by extranaturality 
	of the unit \( \eta \) of \( \Category M \).

	Now we also need to verify that these natural 
	transformations do define a \( \Vdelta \)module 
	structure. One can check that the unitality condition
	containing \( \omega^0 \) is verified as
	a consequence of the counitality condition
	of \( \oplax \), while the other two unitality
	conditions (both involving \( R^0 \)) are
	satisfied by virtue of the unitality condition
	for the enrichment of \( \Category M \) over
	\( \Category V \) as well as the counitality
	condition of \( \oplax \). 
	Moreover, the parallel
	composition condition involving two copies of
	\( \oplax \) is satisfied
	as a consequence 
	of the parallel decomposition
	of \( \oplax \) while the one involving one copy of
	\( \oplax \) holds by virtue
	of both the parallel and sequential decompositions
	of \( \oplax \).
	Finally, the two sequential composition
	conditions involving at least one
	\( \omega \) are verified as a consequence of
	the sequential composition conditions of \( \oplax \),
	while the third one (only involving the structure
	maps \( R^n \)) is verified due to the associativity
	of the composition maps \( \mu \), as well as 
	the parallel and sequential decomposition conditions
	of \( \oplax \).
\end{proof}

Notice that the structure of \( \Vdelta \)module
induced by a category enriched over \( \Category V \)
is such that the diagrams
\[
	\begin{tikzcd}[sep=large]
		\Category V \inout{\oplax^n(\Collection a)}{\EnrichedHom{x}{y}}
		\rar
		\ar[dr, equal]
		& \Category V \inout{\oplax^0\oplax^n(\Collection a)}{\EnrichedHom{x}{y}}
		= R^1 \inout {\oplax^n(\Collection a) , x}{y}
		\dar \\
		& R^{n+1} \inout {\Collection a , x}{y} &
	\end{tikzcd}
\]
commute for any sequence of \( n+1 \) objects \( \Collection a \)
of \( \Category V \), any pair of objects \( x, y \) of \( \Category M \)
and any integer \( n \geq 0 \). This is simply a consequence
of the counitality condition that the oplax monoidal
structure satisfies.
Abstracting away the previous property leads to consider the following definition of a right
representable \( \Vdelta \)module.

\begin{definition}[(Right representable
\( \Vdelta \)modules)]
	A \( \Vdelta \)module \( \Category M^R \) will be called
	right representable if there exists
	  \begin{itemize}
  		  \item a bifunctor
        \[
    		\begin{tikzcd}[sep=large]
    			\EnrichedHom{-}{-} \From
    			\Category M\Op \times \Category M
    			\rar
    			& \Category V
    		\end{tikzcd}
    	\]
	taking values in \( \Category V \);	
	\item  a natural	isomorphism
		\[
		\phi \From \Category V \inout{\oplax^0(-)}{\EnrichedHom{-}{-}}
		\cong
		R^1 \inout {-,-}{-}\,,
		\]
	\end{itemize}
	such that the maps
	\[
		\begin{tikzcd}[sep=large]
			\Category V \inout{\oplax^{p-1}(-)}{\EnrichedHom{-}{-}}
			\ar[r, "\phi^p"]
			& R^p \inout {- , -}{-}
		\end{tikzcd}
	\]
	defined by the diagrams
	\[
		\begin{tikzcd}[column sep=large]
			\Category V \inout{\oplax^{p-1}(\Collection a)}{\EnrichedHom{x}{y}}
			\ar[dr] \ar[ddd, "\phi^p"] & \\
			& \Category V \inout{\oplax^0\oplax^{p-1}(\Collection a)}{\EnrichedHom{x}{y}}
			\ar[d, "\phi"] \\
			& R^1 \inout {\oplax^{p-1}(\Collection a) , x}{y}
			\ar[dl] \\
			R^p \inout {\Collection a , x}{y} &
		\end{tikzcd}
	\]
	are natural isomorphisms, for \( p \geq 0 \).
\end{definition}

\begin{remark}
	By counitality of \( \oplax^0 \),
	one recovers \( \phi \) as
	\( \phi^1 \).
\end{remark}

\begin{remark}
	\label{remark: Vdelta truncation}
	In a right representable \( \Vdelta \)module, the composition map
	\( R^1(\oplax^p, -) \to R^{p+1} \) admits a canonical section
	\[
		R^{p+1} \longrightarrow R^1(\oplax^p, -)
	\]
	which, using sequential composition, allows us
	to rewrite the associators \( R^p(-, R^q) \to R^{p+q} \)
	as the composite
	\[
		R^p(-, R^q)
		\longrightarrow
		R^1(\oplax^{p-1}, R^1(\oplax^{q-1}, -))
		\longrightarrow
		R^2(\oplax^{p-1}, \oplax^{q-1}, -)
		\longrightarrow
		R^{p+q}
	\]
	for \( p, q \geq 0 \).
\end{remark}

\begin{lemma}
	\label{lemma: rep Vdelta truncation}
	In a right representable \( \Vdelta \)module,
	the square
	\[
		\begin{tikzcd}[ampersand replacement=\&]
			\Category V \inout {\oplax^{p-1}(-, \oplax^q, -)} {[-,-]}
			\arrow[r, "\phi^p"]
			\arrow[d, "", swap]
			\& 
			R^p \inout{-, \oplax^q, -}{-}
			\arrow[d, ""] \\
			\Category V \inout {\oplax^{p + q -1}} {[-,-]}
			\arrow[r, "\phi^{p+q}"]
			\& 
			R^{p+q} \inout - -
		\end{tikzcd}
	\]
	commutes whenever it makes sense.
\end{lemma}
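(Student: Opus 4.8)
The plan is to unwind both $\phi^p$ and $\phi^{p+q}$ through their defining diagrams (the ones that factor each $\phi^k$ as $\phi = \phi^1$ followed by the composition map $R^1(\oplax^0\oplax^{k-1}, -) \to R^k$), and then to reduce the commutativity of the large square to a combination of already-established coherence: the associativity/decomposition axioms of the $\Vdelta$module structure, the parallel and sequential decomposition axioms of $\oplax$, and naturality of $\phi = \phi^1$.

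First I would expand the left vertical map. The arrow $\Category V \inout{\oplax^{p-1}(-, \oplax^q, -)}{[-,-]} \to \Category V \inout{\oplax^{p+q-1}}{[-,-]}$ is precomposition with the associator $\oplax^{p+q-1} \to \oplax^{p-1}(-, \oplax^q, -)$ of $\Category V^\oplax$. On the right, the vertical map $R^p(-,\oplax^q,-) \to R^{p+q}$ is one of the structural associators of the $\Vdelta$module. Both $\phi^p$ and $\phi^{p+q}$ are, by their defining diagrams, the composite of $\phi^1$ (applied after inserting a $\oplax^0$ via the counit) with a composition associator $R^1(\oplax^0\oplax^{k-1}, -) \to R^k$. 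Substituting these factorisations turns the square into a larger diagram whose outer boundary is what we want; I would then slice it into subdiagrams.

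The key subdiagrams are: (i) a square expressing that applying $\oplax^0(-)$ and then the $\Vdelta$module associator commutes with the $\oplax$-associator $\oplax^{p+q-1} \to \oplax^{p-1}(-,\oplax^q,-)$ — this is naturality of the relevant structure maps together with functoriality of $\oplax^0$; (ii) the naturality square for $\phi = \phi^1$ against the map $\oplax^0\oplax^{p+q-1} \to \oplax^0\oplax^{p-1}(-,\oplax^q,-)$; and (iii) a triangle/pentagon comparing the two ways of going from $R^1(\oplax^0\oplax^{p-1}(-,\oplax^q,-), -)$ down to $R^{p+q}$ — one route through $R^p(-,\oplax^q,-) \to R^{p+q}$, the other through $R^1(\oplax^0\oplax^{p+q-1},-) \to R^{p+q}$. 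Subdiagram (iii) is where the real content sits: it follows from the parallel and sequential composition axioms of the $\Vdelta$module (Definition of $\Vdelta$module), which encode exactly the compatibility between the $\oplax^0$-insertion, the $\oplax^q$-insertion, and the full composition associators $R^a(-,R^b) \to R^{a+b}$ — essentially the observation already recorded in Remark \ref{remark: Vdelta truncation} that the associators $R^p(-,R^q)\to R^{p+q}$ factor through $R^1(\oplax^{p-1}, R^1(\oplax^{q-1},-))$.

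I expect \textbf{the main obstacle} to be bookkeeping rather than conceptual: writing $\phi^p$ and $\phi^{p+q}$ explicitly enough (through their defining factorisations) that subdiagram (iii) can be recognised as an instance of the decomposition axioms, and in particular tracking which copy of $\oplax^0$ is being inserted where, since the counitality isomorphisms of $\oplax^0$ are used silently throughout. Once the diagram is correctly drawn, each tile commutes by one cited axiom, so I would keep the written proof short: state the factorisation of $\phi^k$, display the expanded diagram, and annotate each region with its justification (naturality of $\phi$; functoriality of $\oplax^0$; parallel/sequential decomposition of $\oplax$; parallel/sequential composition of the $\Vdelta$module), exactly in the style of the surrounding proofs.
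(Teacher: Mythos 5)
Your plan is essentially the paper's proof: expand \( \phi^p \) and \( \phi^{p+q} \) through their defining factorisations (counit insertion, then \( \phi \), then the structural map \( R^1(\oplax^{k-1},-) \to R^k \)) and tile the resulting rectangle, with the first squares commuting by naturality and the last by a \( \Vdelta \)module axiom. The only streamlining is that the final tile is a single square between \( R^1(\oplax^{p-1}(-,\oplax^q,-),-) \) and \( R^{p+q} \) which commutes by the sequential composition axiom alone (with outer index \( 1 \)); the parallel axioms, the decomposition axioms of \( \oplax \), and the factorisation of \( R^p(-,R^q) \) recorded just before the lemma are not needed.
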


\begin{proof}
	For this it is enough to consider the diagram
	\[
		\begin{tikzcd}
			\Category V \inout {\oplax^{p-1}(-, \oplax^q, -)} {[-,-]}
			\arrow[r]
			\arrow[d]
			& 
			\Category V
			\inout {\oplax^0\oplax^{p-1}(-, \oplax^q, -)} {[-,-]}
			\dar \rar
			&
			R^1 \inout {\oplax^{p-1}(-, \oplax^q,-)} -
			\dar \rar
			&
			R^p \inout{-, \oplax^q, -}{-}
			\arrow[d, ""] \\
			\Category V \inout {\oplax^{p + q -1}} {[-,-]}
			\arrow[r]
			& 
			\Category V \inout {\oplax^0\oplax^{p + q -1}} {[-,-]}
			\rar
			& 
			R^1 \inout {\oplax^{p+q-1}} -
			\rar
			&
			R^{p+q} \inout - -
		\end{tikzcd}
	\]
	whose first two squares commute by naturality and whose
	last square commutes by sequential composition.
\end{proof}

The notion of right representable \( \Vdelta \)module
identifies with the one of category enriched
over \( \Category V \), as shown in the following
proposition. 

\begin{proposition}
	\label{prop:right_rep=Cat_V}
	Let \( \Category M \) be a category and
	\( R^n \From \Category V^n \times \Category M
	\nrightarrow \Category M \) be distributors
	for \( n \geq 0 \). Given a bifunctor
	\( [-,-] \From \Category M\Op \times \Category M
	\to \Category V \), and natural isomorphisms
	\( \psi^p \From \Category V\inout{\oplax^{p-1}}{[-,-]} \cong R^p \)
	for \( p \geq 0 \), there is a bijection
	between right representable \( \Vdelta \)module
	structures extending \( (\Category M, R) \) and 
	structures of category enriched over \( \Category V \)
	extending \( (\Category M, [-,-]) \).
\end{proposition}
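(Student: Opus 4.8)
The plan is to argue as in Lemma\UnskipRef{lemma:left-rep} and in the \( 2 \)\=/equivalence between \( \Category V \)\=/modules and left representable \( \Vdelta \)modules established above, exhibiting the two sets of structures as images of one another under Proposition\UnskipRef{prop:Cat_V-to-Vdelta} and a Yoneda-type reading of the right-representability isomorphisms \( \psi^p \). In one direction this is already essentially available: Proposition\UnskipRef{prop:Cat_V-to-Vdelta}, together with the observation following it, assigns to a structure of category enriched over \( \Category V \) extending \( (\Category M, [-,-]) \) a right representable \( \Vdelta \)module with structure maps \( R^{n+1}\inout{\Collection a, x}{y} = \Category V\inout{\oplax^n(\Collection a)}{\EnrichedHom{x}{y}} \) and canonical representability isomorphism; conjugating with the given \( \psi^p \) produces a right representable \( \Vdelta \)module structure extending \( (\Category M, R) \).

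For the converse I would first read off the enrichment data from a right representable \( \Vdelta \)module structure. Since \( \psi^0 \From \Category V\inout{1_\oplax}{[-,-]}\cong R^0 \), the unit \( \IdentityFunctor_{\Category M}\to R^0 \) becomes a family \( \Category M\inout{x}{y}\to\Category V\inout{1_\oplax}{\EnrichedHom{x}{y}} \) natural in \( x \) and \( y \); evaluating at \( \IdentityMorphism_x \) yields morphisms \( \eta_x\From 1_\oplax\to\EnrichedHom{x}{x} \), and their extranaturality is precisely the naturality of that family. For the composition, the associator \( R^1(-,R^1)\to R^2 \), transported along \( \psi^1 \) and \( \psi^2 \), is a family
\[
	\Integral{m}\Category V\inout{\oplax^0(a)}{\EnrichedHom{m}{y}}\times\Category V\inout{\oplax^0(b)}{\EnrichedHom{x}{m}}\longrightarrow\Category V\inout{a\otimes_\oplax b}{\EnrichedHom{x}{y}}
\]
natural in \( a,b,x,y \); feeding it the class of the pair of counits \( \oplax^0(\EnrichedHom{m}{y})\to\EnrichedHom{m}{y} \) and \( \oplax^0(\EnrichedHom{x}{m})\to\EnrichedHom{x}{m} \), with \( a=\EnrichedHom{m}{y} \) and \( b=\EnrichedHom{x}{m} \), and using the counitality axioms of \( \oplax \) to identify the resulting source with \( \EnrichedHom{m}{y}\otimes_\oplax\EnrichedHom{x}{m} \), produces morphisms \( \mu_{x,m,y}\From\EnrichedHom{m}{y}\otimes_\oplax\EnrichedHom{x}{m}\to\EnrichedHom{x}{y} \); naturality of the associator makes \( \mu \) natural in \( x \) and \( y \) and extranatural in \( m \).

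It then remains to match axioms and to check that the two recipes are mutually inverse. By Remark\UnskipRef{remark: Vdelta truncation} and Lemma\UnskipRef{lemma: rep Vdelta truncation}, the full associator structure of a right representable \( \Vdelta \)module is generated by its unit, by the \( \oplax \)\=/associators \( R^p(-,\oplax^q,-)\to R^{p+q} \) (which under \( \psi \) are induced by the decomposition transformations of \( \oplax \), hence carry no further data), and by the binary associator \( R^1(-,R^1)\to R^2 \); dually, by Proposition\UnskipRef{prop:truncation} it is enough to match the abridged enrichment data and axioms of Definition\UnskipRef{def:oplax_enriched_abridged}. Under the correspondence above, the two \( R^0 \)-unitality conditions become the two unit triangles for \( (\eta,\mu) \), the purely-\( R \) sequential composition condition becomes the associativity hexagon for \( \mu \), and every remaining condition reduces to the counitality, parallel and sequential decomposition axioms of \( \oplax \) --- this is exactly the reverse reading of the verifications in Proposition\UnskipRef{prop:Cat_V-to-Vdelta}, so the two assignments are mutually inverse and the bijection follows. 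I expect the only real work to be this bookkeeping, together with checking that the recovery of the associators from their binary-on-counits values --- the content of the truncation machinery, used through the naturality of the associators and the counitality of \( \oplax \) --- is compatible with the \( \psi^p \); no genuinely new phenomenon arises beyond what already appears in Proposition\UnskipRef{prop:Cat_V-to-Vdelta}.
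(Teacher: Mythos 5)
Your proposal is correct and follows essentially the same route as the paper's proof: the forward direction via Proposition~[\ref{prop:Cat_V-to-Vdelta}] transported along the \( \psi^p \), the converse by extracting \( \eta \) from the unit \( \IdentityFunctor_{\Category M} \to R^0 \) and \( \mu \) from the binary associator \( R^1(-,R^1) \to R^2 \) by a Yoneda-type evaluation, the axioms matched against the unitality and sequential composition diagrams, and mutual inversibility via the truncation statements [\ref{remark: Vdelta truncation}, \ref{lemma: rep Vdelta truncation}]. The only cosmetic slip is that after evaluating at \( a = \EnrichedHom{m}{y} \), \( b = \EnrichedHom{x}{m} \) the source is already \( \EnrichedHom{m}{y} \otimes_\oplax \EnrichedHom{x}{m} \), so no counitality is needed at that point (it is needed, as you note elsewhere, in checking the two recipes are inverse).
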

\begin{proof}
	Starting from an extension of structure \( (\Category M, [-,-]) \)
	into a category enriched over \( \Category V \),
	we have already explained how to build a canonical
	\( \Vdelta \)module structure on \( \Category M \)
	\UnskipRef{prop:Cat_V-to-Vdelta}.
	Using this construction and
	the isomorphisms \( \psi^p \), one can
	then extend \( (\Category M, R) \) into a \( \Vdelta \)module
	structure.
	Finally, letting \( \phi \coloneqq \psi^1 \),
	one can show that for this \( \Vdelta \)module structure
	\( (\Category M, R) \), we have
	\( \phi^p = \psi^p \) 
	as in the definition of right representable modules;
	it is thus right representable.

	We conversely now prove that any right
	representable \( \Vdelta \)module yields
	a category enriched over \( \Category V \).
	Let \( \Category M \) be a right representable \( \Vdelta \)module 
	with bifunctor \( \EnrichedHom{-}{-} \). Let us start by showing that
	the \( \Vdelta \)module associators induce an enrichment
	over \( \Category V \) on the category \( \Category M \) endowed
	with \( \EnrichedHom{-}{-} \).
	Thanks to the associator
	\[
		R^1\,(-,R^1) \longrightarrow R^2
	\]
	of the right representable \( \Vdelta \)module
	structure, one can obtain the composite map
	\[
		(\Category V \times \Category V)
		\inout{a \times b}{\EnrichedHom{m}{y} \times \EnrichedHom{x}{m}}
		\to
		(\Category V \times \Category V)
		\inout{\oplax^0(a) \times \oplax^0(b)}{\EnrichedHom{m}{y}
		\times \EnrichedHom{x}{m}}
		\to
		\Integral{m}
		\Category V \inout{\oplax^0(a)}{\EnrichedHom{m}{y}}
		\times 
		\Category V \inout{\oplax^0(b)}{\EnrichedHom{x}{m}} 
	\to \Category V \inout{a \otimes_\oplax b}{\EnrichedHom{x}{y}}
	\]
	natural in \( a, b \).
	Such maps are in bijection
	with transformations
	\[
		\begin{tikzcd}
			\EnrichedHom{m}{y} \otimes_\oplax
			\EnrichedHom{x}{m}
			\ar[r, "\mu_{x,m,y}"]
			& \EnrichedHom{x}{y}
		\end{tikzcd}
	\]
	which are natural in \( x, y \) and
	extranatural in \( m \).
	In a simpler fashion, the unit
	\( \IdentityFunctor_{\Category M} \to R^0 \)
	yields
	\[
		\begin{tikzcd}
			1_\oplax 
			\ar[r, "\eta_x"]
			& \EnrichedHom{x}{x}
		\end{tikzcd}
	\]
	which is extranatural in \( x \).
	
	The associativity condition for \( [-,-] \)
	follows from this
	\[
		\begin{tikzcd}
			R^1 \circ (-,R^1(-,R^1))
			\dar
			\rar["\IsIsomorphicTo", dash]
			&[-3ex]
			R^1(-,R^1) \circ (-,R^1)
			\rar
			&[3ex] R^2(-,R^1)
			\ar[d] \\
			R^1(-,R^2)
			\ar[rr]
			&& R^3
		\end{tikzcd}
	\]
	sequential composition diagram, while the two unitality conditions
	follow from these two
	\[
		\begin{tikzcd}
		    \IdentityFunctor_{\Category M} \circ R^p
		    \rar
		    \ar[dr,
			"\rotatebox{-25}{$\IsIsomorphicTo$}", outer sep = -1.5pt]
		    & R^0 \circ R^1
		    \dar \\
		    & R^1
		\end{tikzcd}
		\qand
		\begin{tikzcd}
		    R^1(-, \IdentityFunctor_{\Category M})
		    \rar
		    \ar[dr, "\rotatebox{-25}{$\IsIsomorphicTo$}",
				outer sep = -1.5pt]
		    & R^1(-,R^0)
		    \dar \\
		    & R^1
		\end{tikzcd}
	\]
	unitality diagrams.
	This concludes the proof that any right
	representable \( \Vdelta \)modules induces
	a category enriched over \( \Category V \).

	The fact that these two procedures are inverse to one another
	follows from integral calculus, the counitality of \( \oplax^0 \)
	and from the fact that
	the structure of a right representable \( \Vdelta \)module
	can be reconstructed from the two transformations
	\( \IdentityFunctor_{\Category M} \to R^0 \)
	and \( R^1(-,R^1) \to R^2 \) (together with the oplax structure
	of \( \Category V^\oplax \)) as explained earlier
	\DoubleUnskipRef{remark: Vdelta truncation}
	{lemma: rep Vdelta truncation}.
\end{proof}

We have previously described how to create a right representable
\( \Vdelta \)module from a category enriched over \( \Category V \).
This construction can be extended into a 2-functor
\[
	\Cat_{\Category V} \longrightarrow 
		\Vdelta\mathsf{mod}^\mathrm{fun}_\mathrm{oplax}
\]
in the obvious way.

\begin{remark}
	\label{remark: trunction of oplax functors between deltamods}
	Let \( (F, \{f^p\}) \From \Category M^R \to \Category N^S \)
	be an oplax functor between two \( \Vdelta \)modules.
	If \( \Category M^R \) is right representable, then
	each \( f^p \) can be rewritten as
	\[
		\begin{tikzcd}
			R^p \inout {-, x} y
			\ar[dr]
			\ar[ddd,"f^p"] &
			\\
			[-2ex]
			& R^1 \inout {\oplax^{p-1}(-), x} y
			\ar[d, "f^1"]
			\\
			[4ex]
			& S^1 \inout {\oplax^{p-1}(-), F(x)} {F(y)}
			\ar[dl]
			\\
			[-2ex]
			S^p \inout {-, F(x)} {F(y)} 
		\end{tikzcd}
	\]
	thanks to the second diagram defining
	oplax functors
	\UnskipRef{def:morphism_V-delta_mod}
	and thanks to the canonical section
	\( R^p \to R^1(\oplax^{p-1}, -) \)
	\UnskipRef{remark: Vdelta truncation}.
\end{remark}

\begin{theorem}
	Let \( \Category V^\oplax \) be a normal oplax monoidal
	category, then
	the above map induces a 2-equivalence
	\[
		\Cat_{\Category V}
		=
		\mathsf{right~rep.~}
		\Vdelta\mathsf{mod}^\mathrm{fun}_\mathrm{oplax}
	\]
	between the 2-category of categories enriched over \( \Category V \)
	and the one of right representable \( \Vdelta \)modules.
\end{theorem}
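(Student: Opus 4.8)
The plan is to verify the three conditions defining a 2\=/equivalence --- essential surjectivity on objects, full faithfulness on 1\=/cells, and full faithfulness on 2\=/cells --- mirroring the argument already used for the equivalence between \( \Category V \)\=/modules and left representable \( \Vdelta \)modules. Essential surjectivity on objects will be immediate from Proposition~\ref{prop:right_rep=Cat_V}: a right representable \( \Vdelta \)module \( \Category M^R \) comes by definition with a representing bifunctor \( \EnrichedHom{-}{-} \) and isomorphisms \( \phi^p \), so taking \( \psi^p \coloneqq \phi^p \) there exhibits the structure maps of \( \Category M^R \) as those induced, via Proposition~\ref{prop:Cat_V-to-Vdelta}, by a structure of category enriched over \( \Category V \) on \( (\Category M, \EnrichedHom{-}{-}) \); up to the canonical isomorphisms \( \psi^p \), this enriched category is carried back to \( \Category M^R \) by the 2\=/functor. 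This step uses no normality hypothesis.

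For full faithfulness on 1\=/cells, I would fix two categories \( \Category B, \Category C \) enriched over \( \Category V \) together with their associated (right representable) \( \Vdelta \)modules. In one direction, an enriched functor \( F \) induces the oplax functor between \( \Vdelta \)modules whose component \( f^p \) is post\=/composition of the isomorphism \( \Category V\inout{\oplax^{p-1}(\ba)}{\EnrichedHom{x}{y}} \cong R^p\inout{\ba, x}{y} \) with \( F_{x,y} \); that this satisfies the three diagrams of Definition~\ref{def:morphism_V-delta_mod} is routine. In the other direction, given an oplax functor \( (F, \{f^p\}) \), Remark~\ref{remark: trunction of oplax functors between deltamods} reduces the whole family \( \{f^p\} \) to \( f^1 \), which under right representability is a natural transformation \( \Category V\inout{\oplax^0(-)}{\EnrichedHom{x}{y}} \to \Category V\inout{\oplax^0(-)}{\langle F(x), F(y)\rangle} \). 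Here the normality of \( \Category V^\oplax \) is essential: the structural isomorphism \( \oplax^0 \cong \IdentityFunctor_{\Category V} \) lets one strip off \( \oplax^0 \), and the Yoneda lemma then yields a unique morphism \( F_{x,y} \From \EnrichedHom{x}{y} \to \langle F(x), F(y)\rangle \). (Without normality there would be strictly more oplax functors between \( \Vdelta \)modules than enriched functors.) I would then check, by truncating the defining diagrams of an oplax functor between \( \Vdelta \)modules, that \( \{F_{x,y}\} \) is natural in \( x, y \), that the \( p = 0 \) diagram of Definition~\ref{def:morphism_V-delta_mod} becomes the unit axiom, and that the third diagram of Definition~\ref{def:morphism_V-delta_mod} at \( (p,q) = (1,1) \) --- transported through right representability, using the canonical section \( R^2 \to R^1(\oplax, -) \) of Remark~\ref{remark: Vdelta truncation} --- becomes the composition axiom of an abridged \( \Category V \)\=/enriched functor; the higher arity diagrams follow from the additivity already exploited in Remark~\ref{remark: trunction of oplax functors between deltamods}. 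The two assignments are mutually inverse by the Yoneda lemma together with counitality of \( \oplax^0 \).

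For full faithfulness on 2\=/cells, a morphism between two oplax functors between \( \Vdelta \)modules is a natural transformation \( \alpha \) of the underlying functors subject to a square that must commute for every \( p \geq 0 \). Using right representability together with the expression of \( f^p, g^p \) in terms of \( f^1, g^1 \), this infinite family of conditions collapses to the single case \( p = 1 \), which --- again by the Yoneda lemma and normality --- is exactly the abridged \( \Category V \)\=/enriched naturality condition \( \langle -, \alpha_y\rangle \circ F_{x,y} = \langle \alpha_x, -\rangle \circ G_{x,y} \). Combining the three parts yields the claimed 2\=/equivalence.

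I expect the main obstacle to be the bookkeeping in the 1\=/cells step: matching, term by term, the truncated coherence diagrams of an oplax functor between right representable \( \Vdelta \)modules with the unit and composition diagrams of an abridged \( \Category V \)\=/enriched functor, and confirming that the correspondence \( f^1 \leftrightarrow \{F_{x,y}\} \) is compatible with composition of 1\=/cells on both sides. Everything else is a transcription of the left representable argument; the one genuinely new ingredient is the use of the normality hypothesis to remove \( \oplax^0 \) before invoking the Yoneda lemma.
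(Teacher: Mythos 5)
Your proposal is correct and follows essentially the same route as the paper's proof: essential surjectivity via the bijection between right representable \( \Vdelta \)module structures and enrichment structures, then full faithfulness on 1\=/cells by using normality to strip \( \oplax^0 \), extracting \( F_{x,y} \) from \( f^1 \) by the Yoneda lemma, recovering the higher \( f^p \) from \( f^1 \) via the truncation remark, and checking the unit and composition axioms against the \( p=0 \) and \( R^1(-,R^1)\to R^2 \) diagrams, with 2\=/cells handled analogously. No gaps to report.
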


\begin{proof}
	The previous proposition implies essential surjectivity on objects.
	We now turn to 1-fully faithfulness.
	Let  \( (\Category B, [-,-]) \)
	and \( (\Category C, \langle -, - \rangle) \) be two
	categories enriched over \( \Category V \) and let us denote by
	\( \Category B^R \) and \( \Category C^S \)
	the two respective
	associated \( \Vdelta \)modules.
	We need to build from every oplax functor
	\( (F, \{f^p\}) \From \Category B^R \to \Category C^S \)
	a functor \( F \From \Category B \to \Category C \)
	enriched over \( \Category V \).
	The underlying functor will be the same. For its additional
	structure,
	we start by
	noting that since \( \Category V^\oplax \) is normal, one
	gets natural transformations
	\[
		\begin{tikzcd}[sep=large]
			R^1 \inout {-,x} y \coloneqq \Category V\inout{-}{\EnrichedHom{x}{y}}
			\rar["f^1"]
			& \Category V\inout{-}{\langle F(x), F(y) \rangle}
			\eqqcolon S^1 \inout {-, x} y
		\end{tikzcd}
	\]
	from which one extracts the maps \( F_{x, y} \From [x, y] \to
	\langle F(x), F(y) \rangle \).
	These maps satisfy the composition and unit axioms for
	\( \Category V \)-enriched functors
	\UnskipRef{definition: V-enriched functor abridged}
	because the diagrams
	\[
		\begin{tikzcd}[ampersand replacement=\&]
			\Category B \inout x y
			\arrow[r, ""]
			\arrow[d, "", swap]
			\& R^0 \inout x y
			\arrow[d, "f^0"] \\
			\Category C \inout {F(x)} {F(y)}
			\arrow[r, "", swap]
			\& S^0 \inout {F(x)} {F(y)}
		\end{tikzcd}
		\qand
		\begin{tikzcd}[ampersand replacement=\&]
			R^1(-, R^1) \inout {-,x} y
			\arrow[d, "f^1 \times f^1"]
			\arrow[r, "", swap]
			\& R^2 \inout {-,x} y
			\arrow[d, "f^2"] \\
			S^1(-, S^1) \inout {-, F(x)} {F(y)}
			\arrow[r, "", swap]
			\& S^2 \inout {-, F(x)} {F(y)}
		\end{tikzcd}
	\]
	commute and because both \( f^0 \) and \( f^2 \)
	can be rewritten using
	the components \( F_{x,y} \)
	\UnskipRef{remark: trunction of oplax functors between deltamods}.
	From this process one receives a \( \Category V \)-enriched functor.
	One can check that this process is inverse to the one
	sending \( \Category V \)-enriched functors to oplax functors using
	the usual arguments in addition to the remark above that
	all maps \( f^p \) can be recovered from \( f^1 \)
	\UnskipRef{remark: trunction of oplax functors between deltamods}.

	The proof of the 2-fully faithfulness can be obtained similarly.
\end{proof}

\subsection{Categories enriched over \( \Category V \)
vs \( \Category V \)\=/categories}
\label{sec:V-Cat&Cat_V}

As mentioned at the beginning of this section,
the definition of a category \( \Category C \)
enriched over an oplax monoidal category
\( \Category V^\oplax \) 
differs from the standard definition of
a \( \Category V \)\=/category in that the former
consists in a category together with additional
structure, whereas the latter is a collection of
objects and hom-objects together with composition
maps, from which an underlying category is recovered.

\begin{definition}[(\( \Category V \)\=/category)]
	\label{def:V-cat}
	A \( \Category V \)\=/category
	\( \Category C \)
	is the data of
	\begin{itemize}
		\item
			a set of objects \( \Objects(\Category C) \);
		\item
			an object \( \EnrichedHom{x}{y} \) of
			\( \Category V \)
			for each pair of objects
			\( x, y \in \Objects(\Category C) \);
		\item
			for each \( x \in \Objects(\Category C) \),
			\[
				\begin{tikzcd}
					1_\oplax
					\rar["\etaVCat_x"]
						& \EnrichedHom{x}{x}
				\end{tikzcd}
			\]
			a unit map;
		\item
			for all triplets of objects
			\( x, y, z \in \Objects(\Category C) \),
			morphisms
			\[
				\begin{tikzcd}[column sep = large]
					\EnrichedHom{y}{z}\otimes_\oplax
					\EnrichedHom{x}{y}
					\rar["\muVCat_{x, y, z}"]
						& \EnrichedHom{x}{z}
				\end{tikzcd}
			\]
	\end{itemize}
	such that:
	\begin{description}
		\item[Associativity]
		the associativity diagram in \( \Category V \)
	\[
		\begin{tikzcd}[row sep = large]
			&[-15ex]
			\EnrichedHom y z \otimes_\oplax
			\EnrichedHom x y \otimes_\oplax
			\EnrichedHom w x
			\arrow[dl]
			\arrow[dr] &[-15ex]
			\\
			\big(\EnrichedHom y z \otimes_\oplax \EnrichedHom x y\big)
			\otimes_\oplax \EnrichedHom{w}{x}
			\arrow[d, "\muVCat_{x,y,z} \otimes_\oplax -"{left}]
			&&
			\EnrichedHom{y}{z} \otimes_\oplax
			\big(\EnrichedHom x y \otimes_\oplax \EnrichedHom w x\big)
			\arrow[d, "- \otimes_\oplax \muVCat_{w,x,y}"]
			\\
			\EnrichedHom x z \otimes_\oplax \EnrichedHom w x
			\arrow[dr, "\muVCat_{w,x,z}"{below left}]
			&& \EnrichedHom y z \otimes_\oplax \EnrichedHom w y
			\arrow[dl, "\muVCat_{w,y,z}"]
			\\
			& \EnrichedHom{w}{z} &
		\end{tikzcd}
	\]
	commutes for every \( w, x, y, z \in \Objects(\Category C) \);
	\item[Unitality]
		the two unitality diagrams in \( \Category V \)
		\[
			\begin{tikzcd}[column sep = small]
				& \OplaxHom{x, y}
				\ar[ddd]
				\ar[dl]
				\\[-3ex]
				1_\oplax \otimes_\oplax
				\EnrichedHom{x}{y}
				\dar["\etaVCat_y \otimes_\oplax -" swap]
				\\[6ex]
				\EnrichedHom y y \otimes_\oplax \EnrichedHom x y
				\ar[dr, "\muVCat_{x, y, y}" swap]
				\\
				& \EnrichedHom{x}{y}
			\end{tikzcd}
			\qand
			\begin{tikzcd}[column sep = small]
				\OplaxHom{x,y}
				\ar[ddd]
				\ar[dr]
				\\[-3ex]
				& \EnrichedHom{x}{y}
				\otimes_\oplax 1_\oplax
				\dar["- \otimes_\oplax \etaVCat_x"]
				\\[6ex]
				& \EnrichedHom x y \otimes_\oplax \EnrichedHom x x
				\ar[dl,"\muVCat_{x,x,y}"]
				\\
				\EnrichedHom{x}{y}
			\end{tikzcd}
		\]
		commute for every \( x, y \in \Objects(\Category C) \).
	\end{description}
\end{definition}

In other words, the previous definition of
a \( \Category V \)\=/category \( \Category C \)
is almost identical to that of a category enriched
over \( \Category V \) given previously
\UnskipRef{def:oplax_enriched_abridged}
except  that we do not require \( [-,-] \) to be
a bifunctor but simply a map between pairs of objects
in \( \Category C \) to an object in \( \Category V \)
---the hom-objects---nor do we require
the composition  and unit maps \( \muVCat \) and \( \etaVCat \)
to be extra/natural transformations.

\begin{remark}
	Note that one could adapt the extended definition
	of a category enriched over \( \Category V \)
	given above \UnskipRef{def:cat_V-extended}
	so as to obtain a \( \Category V \)\=/category with
	composition maps \( m^n \) for all \( n \geq -1 \)
	which are not required to be extra/natural transformations.
	As in the case of categories enriched over
	\( \Category V \), these two possible definitions
	(abridged \vs extended)
	turn out to be equivalent (due to the same
	mechanism).
	This has been previously remarked by Leinster
	\cite[2.2]{arXiv:9901139}.
\end{remark}

To each \( \Category V \)\=/category \( \Category C \) one can associate
the corresponding underlying category \( \uC \)
endowed with the same objects \( x, y, \dots \) and maps of
the form \( \fVCat \From 1_\oplax \to \EnrichedHom{x}{y} \)
as morphisms. Note that the composition of these
underlying morphisms---and more importantly
its associativity---is due to the comonoid structure
of the oplax unit \( 1_\oplax \) \UnskipRef{lemma: 1 is a comonoid}.
Similarly, one can define \( \Category V \)\=/functors as follows.

\begin{definition}[(\( \Category V \)\=/functor)]
	A \( \Category V \)-functor
	\( F \From \Category B \to \Category C \)
	between two \( \Category V \)-categories
	\( (\Category B, [-,-], \etaVCat, \muVCat) \)
	and \( (\Category C, \langle -, -\rangle, \thetaVCat, \nuVCat) \)
	consists in the following data:
	\begin{itemize}
		\item
			a map which assigns to each object \( x \)
			of \( \Category B \) an object
			\( F(x) \) of \( \Category C \);
		\item
			a map which assigns to each pair of objects
			\( x, y \) of \( \Category B \) a morphism
			\[
				\begin{tikzcd}[column sep = huge]
					\EnrichedHom{x}{y}
					\rar["F_{x,y}"]
						&
						\langle F(x), F(y) \rangle
				\end{tikzcd}
			\]
			in \( \Category V \),
	\end{itemize}
	such that:
	\begin{description}
		\item[unit]
			the unit diagram in \( \Category V \)
			\[
				\begin{tikzcd}[column sep=huge, row sep=large]
						1_\oplax
						\ar[d, "{\etaVCat_x}",swap]
					\ar[rd,"{\thetaVCat_{F(x)}}"] &
					\\
					\EnrichedHom{x}{x}
					\ar[r, "F_{x,x}"]
						& \langle F(x), F(x) \rangle
				\end{tikzcd}
			\]
			commutes for every \( x \in \Objects(\Category B) \);
		\item[composition]
			the composition diagram in \( \Category V \)
			\[
				\begin{tikzcd}[column sep = 70, row sep = large]
					\EnrichedHom{y}{z}
					\otimes_\oplax
					\EnrichedHom{x}{y}
					\dar["{\muVCat_{x,y,z}}" swap]
					\rar["F_{y,z} \otimes_\oplax F_{x, y}"]
						& \langle F(y), F(z) \rangle
						\otimes_\oplax
						\langle F(x), F(y) \rangle
						\dar["{\nuVCat_{F(x), F(y), F(z)}}"]
					\\
					\EnrichedHom{x}{z}
					\rar["F_{x,z}"]
						& \langle F(x), F(z) \rangle
				\end{tikzcd}
			\]
			commutes for every \( x, y, z \in \Objects(\Category B) \).
	\end{description}
\end{definition}

Again, the definition of a \( \Category V \)\=/functor differs from the one of
a \( \Category V \)\=/enriched functor by the fact that \( F \) is not functorial
and the components \( F_{x,y} \) are not assumed to be natural. 

Any \( \Category V \)\=/functor \( F \From \Category B \to \Category C \) induces an 
underlying functor \( \uF\From\uB \to \uC \)  
between the corresponding underlying categories.
The action of \( \uF \) is given by the map \( x \mapsto F(x) \)
on objects and associates to any underlying morphism
\( \fVCat \From 1_\oplax \to \EnrichedHom x y \), the composite
\[
	\begin{tikzcd}
		1_\oplax
		\ar[r, "\fVCat"]
		& \EnrichedHom x y
		\ar[r, "F_{x,y}"]
		& \langle F(x), F(y) \rangle
	\end{tikzcd}
\]
interpreted as a morphism in \( \uC \).

In order to define natural \( \Category V \)\=/transformations, we shall focus on 
the case where the category \( \Category V \) is normal
so that the  structure map
	\(
		\oplax^0(x) \to x
	\)
	is invertible for every object \( x \in \Objects(\Category V) \).
In the normal case, we recover the standard result that the bracket
\( \EnrichedHom{-}{-} \) is bifunctorial on the underlying category.

\begin{proposition}[(\( \EnrichedHom{-}{-} \) is a bifunctor)]
\label{prop: EnrichedHom is a bifunctor}
	Let \( \Category V \) be a normal oplax monoidal category.
	Let furthermore \( (\Category C, \EnrichedHom{-}{-}, \eta, \mu) \) be a 
	\( \Category V \)\=/category and let
	\( \uC \) denote its underlying
	category. The map
	\[
		\begin{tikzcd}[sep=large]
			\EnrichedHom{-}{-}\From
			\uC\Op \times
			\uC
			\rar
			& \Category V
	   \end{tikzcd}
	\]
	which assigns
	\begin{itemize}
	\item
		to any pair of objects
		$x,y \in \Objects(\Category C)$,
		the object $\EnrichedHom{x}{y}$
		of \( \Category V \);
	\item
		to any pair of morphisms
		$\hVCat_1 \From 1_\oplax
		\to \EnrichedHom{y_1}{x_1}$
		and $\hVCat_2 \From 1_\oplax
		\to \EnrichedHom{x_2}{y_2}$,
		the morphism
		\[
			\begin{tikzcd}[column sep = large]
				\EnrichedHom{x_1}{x_2}
				\ar[r, "\EnrichedHom{\hVCat_1}{\hVCat_2}"]
				& \EnrichedHom{y_1}{y_2}
			\end{tikzcd}
		\]
		defined as
		\[
			\begin{tikzcd}[row sep=large, column sep=small]
				\EnrichedHom{x_1}{x_2}
				\ar[dr]
				\ar[ddd, "\EnrichedHom{\hVCat_1}{\hVCat_2}"] & \\
				& 1_\oplax \otimes_\oplax \EnrichedHom{x_1}{x_2} \otimes_\oplax 1_\oplax
				\ar[d, "{\hVCat_2 \otimes_\oplax -\, \otimes_\oplax \hVCat_1}"] \\
				& \EnrichedHom{x_2}{y_2} \otimes_\oplax \EnrichedHom{x_1}{x_2}
								\otimes_\oplax \EnrichedHom{y_1}{x_1}
				\ar[dl, "m^2"] \\
				\EnrichedHom{y_1}{y_2}
			\end{tikzcd}
		\]
		where the first diagonal arrow is either one of the composite
		\[
			\begin{tikzcd}[sep=small]
				& 1_\oplax \otimes_\oplax \EnrichedHom{x_1}{x_2}
				\ar[dr] & \\
				\EnrichedHom{x_1}{x_2}
				\ar[ur]
				\ar[dr]
						&& 1_\oplax \otimes_\oplax \EnrichedHom{x_1}{x_2} \otimes_\oplax 1_\oplax\\
				& \EnrichedHom{x_1}{x_2} \otimes_\oplax 1_\oplax 
				\ar[ur] &
			\end{tikzcd}
		\]
		which are identical due to the sequential decomposition conditions;
	\end{itemize}
	is a bifunctor.
\end{proposition}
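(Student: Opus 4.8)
The plan is to work throughout with the extended description of the \( \Category V \)\=/category \( \Category C \), in which the ternary map \( m^2 \) appearing in the statement belongs to a full family \( \{m^n\}_{n \geq -1} \) governed by the additivity diagrams; the passage from the abridged data \( (\EnrichedHom{-}{-},\eta,\mu) \) to this family is obtained exactly as in the truncation arguments already used \UnskipRef{prop:truncation}. Since the value of the prospective bifunctor on objects and on morphisms is already prescribed, only two things remain to be checked: that identities are sent to identities, and that composition is preserved.

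For identities, one substitutes \( h_1 = \eta_{x_1} \) and \( h_2 = \eta_{x_2} \) (these being the identities of \( \uC \)) into the defining composite and expands \( m^2 \), by additivity, as \( \mu_{x_1,x_1,x_2} \) preceded by \( \mu_{x_1,x_2,x_2} \otimes_\oplax - \) after a decomposition of \( \oplax^2 \). The two unitality triangles of \UnskipRef{def:V-cat} then apply in turn: the first collapses the factor \( \eta_{x_2} \) against \( \mu_{x_1,x_2,x_2} \), the second collapses \( \eta_{x_1} \) against \( \mu_{x_1,x_1,x_2} \). By normality of \( \Category V \) the leading diagonal \( \EnrichedHom{x_1}{x_2} \to 1_\oplax \otimes_\oplax \EnrichedHom{x_1}{x_2} \otimes_\oplax 1_\oplax \) factors through the invertible counit \( \oplax^0\EnrichedHom{x_1}{x_2} \to \EnrichedHom{x_1}{x_2} \), and after the two collapses the whole composite is the inverse counit followed by the counit, hence the identity of \( \EnrichedHom{x_1}{x_2} \).

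For composition I would first record the elementary observation that the defining formula factors as \( \EnrichedHom{h_1}{h_2} = \EnrichedHom{h_1}{\IdentityMorphism_{y_2}} \circ \EnrichedHom{\IdentityMorphism_{x_1}}{h_2} \), and symmetrically in the opposite order, where \( \EnrichedHom{h_1}{\IdentityMorphism_{y_2}} \) and \( \EnrichedHom{\IdentityMorphism_{x_1}}{h_2} \) are the partial morphisms obtained by setting the remaining variable to a unit map; this is again a matter of expanding \( m^2 \) and collapsing one unit through a unitality triangle. By the standard reconstruction of a functor on a product category from its two families of one-variable functors together with an interchange condition, it then suffices to check that for each object \( x \) the assignment \( h_2 \mapsto \EnrichedHom{\IdentityMorphism_x}{h_2} \) is a covariant functor \( \uC \to \Category V \), that \( h_1 \mapsto \EnrichedHom{h_1}{\IdentityMorphism_x} \) is a contravariant one, and that for each pair \( h_1, h_2 \) the interchange square determined by these families commutes; the resulting bifunctor then sends \( (h_1,h_2) \) to the common diagonal, which is the stated formula. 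Preservation of identities by each partial assignment is the identity case already settled; preservation of composition, say \( \EnrichedHom{\IdentityMorphism_x}{k \VerticalComposition h} = \EnrichedHom{\IdentityMorphism_x}{k} \circ \EnrichedHom{\IdentityMorphism_x}{h} \), unwinds — using that \( k \VerticalComposition h \) in \( \uC \) is \( \mu \circ (k \otimes_\oplax h) \circ w^1 \), with \( w^1 \) the canonical comultiplication making \( 1_\oplax \) a comonoid \UnskipRef{lemma: 1 is a comonoid} — to a single instance of the associativity hexagon for \( \mu \), together with the naturality of the decomposition transformations of \( \Category V^\oplax \) and the (extra)naturality of the \( m^n \); the contravariant statement is the mirror computation using the other unitality triangle, and the interchange square is once more the associativity hexagon, read this time as the commutation of post-composition with \( h_2 \) on the codomain side and pre-composition with \( h_1 \) on the domain side. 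I expect this last point — reconciling the two orders of composition through the hexagon while keeping track of the \( \oplax^0 \)- and \( \oplax^{-1} \)-insertions, which are shuffled past one another by the parallel and sequential decomposition axioms of \( \Category V^\oplax \) (these also identifying, in the normal case, the two candidates for the leading diagonal of the statement) and by the coherence of the \( w^n \) \UnskipRef{lemma: 1 is a comonoid} — to be the only genuinely delicate step; everything else is a faithful transcription of the \( \Category V \)\=/category and oplax-category axioms.
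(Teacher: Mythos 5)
Your proof is correct and follows the same classical route the paper itself appeals to — its proof is just the one-line remark that the argument is ``similar to the strong monoidal case'' of Kelly — namely functoriality checked through the unit triangles, the associativity hexagon, and the coherence of unit insertions, with exactly the extra oplax bookkeeping (parallel/sequential decompositions, the comonoid structure of \( 1_\oplax \), normality) that you describe. One small caution: in this proposition \( \Category C \) is a bare \( \Category V \)\=/category, so the maps \( m^n \) carry no (extra)naturality you may invoke; fortunately your computation never needs it, since the naturality actually used is that of the decomposition transformations and counit of \( \Category V^\oplax \) with respect to morphisms of \( \Category V \) (such as \( h \), \( k \), \( \mu \), \( \eta \)), which together with the hexagon, the two unit triangles and the coherence of the \( w^n \) suffices.
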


\begin{proof}
	Similar to the strong monoidal case \cite{MR2177301}. 
\end{proof}

\begin{remark}
	Note that, while the bracket \( \EnrichedHom{-}{-} \)
	is not generically bifunctorial in the non-normal case, 
	the composite \( \OplaxHom{-,-} \) is always a bifunctor. 
	This fact arguably constitutes the most
	important departure from the standard case (wherein
	one enriches over a strong monoidal category). 
	Indeed,
	when enriching over an oplax monoidal category,
	the assignment of a pair of objects \( (x, y) \)
	of a \( \Category V \)\=/category \( \Category C \) to
	the corresponding hom-object \( [x,y] \) of \( \Category V \)
	cannot be upgraded to a bifunctor unless the enrichment
	is normal.
\end{remark}


\begin{definition}[(Natural \( \Category V \)\=/transformations)]
	Let \( \Category V^\oplax \) be a normal oplax monoidal category.
	Let furthermore \( F, G \From \Category B \to \Category C \)
	be two \( \Category V \)\=/functors between two
	\( \Category V \)\=/categories
	\( (\Category B, [-,-],\eta,\mu) \)
	and \( (\Category C, \langle-,-\rangle, \theta, \nu) \).
	A natural \( \Category V \)\=/transformation
	\( \alpha \From F \Rightarrow G \) is the data of a map
	assigning to each object \( x \) of \( \Category B \)
	a morphism
	\[
		\begin{tikzcd}
			1_\oplax
			\rar["\alphaVCat_x"]
				& \langle{F(x)},{G(x)}\rangle
		\end{tikzcd}
	\]
	of \( \Category V \) such
	that the diagram
	\[
		\begin{tikzcd}[sep=large]
			\EnrichedHom{x}{y}
			\arrow[r, "F_{x,y}"]
			\arrow[d, "G_{x,y}" swap]
			& \langle F(x), F(y) \rangle
			\arrow[d, "{\langle -, \alpha_y \rangle}"] \\
			\langle G(x), G(y) \rangle
			\arrow[r, "{\langle \alpha_x, - \rangle}"]
			& \langle F(x), G(y) \rangle
		\end{tikzcd}
	\]
	commutes for every pair \( x, y \in \Objects(\Category B) \).
\end{definition}

\( \Category V \)\=/categories, \( \Category V \)\=/functors
and natural \( \Category V \)\=/transformations can be assembled into
a \( 2 \)\=/category that we shall denote
\( \VCats \).
The construction associating to each \( \Category V \)-category
\( \Category C \) its underlying category \( \uC \) naturally
extends into a 2-functor \( \VCats \to \Cat \).
In fact, the underlying category of a \( \Category V \)-category
is also naturally endowed with an enrichment structure over
\( \Category V \) due to the bifunctoriality of \( [-,-] \),
hence one gets a 2-functor
\[
	\VCats \longrightarrow \Cat_{\Category V}
\]
whose image is made of \emph{normal} categories.

\begin{definition}[(Normal category enriched over \( \Category V \))]
	We shall say that a category \( (\Category C, [-,-]) \) enriched over \( \Category V^\oplax \)
	is normal if the canonical map
	\[
		\Category C \inout x y \longrightarrow \Category V \inout
		{1_\oplax} {[x,y]}
	\]
	achieved by sending each \( f \From x \to y \) to either of the
	equal composite maps
	\[
		\begin{tikzcd}[row sep=0, column sep = huge]
			& {[x,x]}
			\ar[dr, "{[-, f]}"] &
			\\
			1_\oplax
			\ar[ur, "\eta_x"]
			\ar[dr, "\eta_y" swap]
					&& {[x,y]}
			\\
			& {[y,y]}
			\ar[ur, "{[f, -]}" swap] &
		\end{tikzcd}
	\]
	is an isomorphism for every \( x, y \in \Category C \).
\end{definition}

\begin{proposition}
	\label{theorem: V-Cat vs Cat-V}
	Let \( \Category V^\oplax \) be a normal oplax monoidal category.
	The \( 2 \)\=/category \( \VCats \)
	of \( \Category V \)\=/categories
	is a full reflective \( 2 \)\=/subcategory
	\[
		\begin{tikzcd}[ampersand replacement=\&, column sep = huge]
			\VCats
			\arrow[r, hook, shift right=2, swap, ""]
			\&
			\arrow[l, shift right=2, swap, ""]
			\Cat_{\Category V}
		\end{tikzcd}
	\]
	of the \( 2 \)\=/category 
	\( \Cat_{\Category V} \) of categories enriched over
	\( \Category V \).
\end{proposition}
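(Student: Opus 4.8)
First I would note that the stated \( 2 \)\=/functor \( \VCats \to \Cat_{\Category V} \) sends a \( \Category V \)\=/category \( \Category C \) to the pair \( (\uC, \EnrichedHom - -) \), where \( \uC \) is its underlying category---morphisms \( x \to y \) being the maps \( 1_\oplax \to \EnrichedHom x y \)---and where \( \EnrichedHom - - \) is turned into a bifunctor on \( \uC \) by the \( m^2 \)\=/formula, which is legitimate precisely because \( \Category V^\oplax \) is normal \UnskipRef{prop: EnrichedHom is a bifunctor}. Its essential image is the full sub-\( 2 \)\=/category of \emph{normal} categories enriched over \( \Category V \), since any normal such category is reconstructed, up to isomorphism, as the underlying category of the \( \Category V \)\=/category with the same objects, hom-objects, unit and composition. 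The plan is to prove that this \( 2 \)\=/functor is \( 2 \)\=/fully faithful---so that \( \VCats \) identifies with that full sub-\( 2 \)\=/category---and that the \emph{forgetful} \( 2 \)\=/functor \( \mathrm U \From \Cat_{\Category V} \to \VCats \), which discards the bifunctoriality of the bracket and the (extra)naturality of the structure maps, is a left \( 2 \)\=/adjoint of the inclusion \( \iota \From \VCats \hookrightarrow \Cat_{\Category V} \).

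For \( 2 \)\=/fully faithfulness, consider two \( \Category V \)\=/categories \( \Category B, \Category C \) and the associated normal enriched categories \( \uB, \uC \). A \( \Category V \)\=/functor \( \Category B \to \Category C \), given by an object map \( x \mapsto F(x) \) and components \( F_{x,y} \), determines a functor \( \uB \to \uC \) acting on a morphism \( f \From 1_\oplax \to \EnrichedHom x y \) by \( F_{x,y} \circ f \); this respects identities by the unit axiom and composites by the composition axiom together with the comonoid structure of \( 1_\oplax \) \UnskipRef{lemma: 1 is a comonoid}. Since the bracket on \( \uB \) and \( \uC \) is computed from the \( m^2 \)\=/formula, the naturality square demanded of a \( \Category V \)\=/enriched functor \UnskipRef{definition: V-enriched functor abridged}, evaluated on morphisms \( 1_\oplax \to \EnrichedHom - - \), collapses---after inserting the comonoid of \( 1_\oplax \) and using extranaturality of \( m^2 \)---to an instance of the (extended) composition axiom of the \( \Category V \)\=/functor \UnskipRef{def: V-functor, extended}; hence \( \{F_{x,y}\} \) is automatically natural and we obtain a \( \Category V \)\=/enriched functor \( \uB \to \uC \). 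Conversely a \( \Category V \)\=/enriched functor between normal enriched categories restricts, by forgetting functoriality and naturality, to a \( \Category V \)\=/functor, and the two constructions are mutually inverse. The same reasoning, applied to the defining square of an enriched transformation, shows that a natural \( \Category V \)\=/transformation is automatically natural as a transformation of the underlying functors, which yields the bijection on \( 2 \)\=/cells.

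It remains to exhibit \( \mathrm U \) as a left \( 2 \)\=/adjoint of \( \iota \), \ie to produce, for each category \( (\Category C, \EnrichedHom - -, \eta, \mu) \) enriched over \( \Category V \), a universal \( \Category V \)\=/enriched functor out of \( \Category C \). The unit is the canonical comparison functor \( \Category C \to \iota\,\mathrm U\Category C = \uC \) onto the underlying category of \( \mathrm U\Category C \) (re-enriched via the \( m^2 \)\=/formula), equipped with identity components \( \EnrichedHom x y \to \EnrichedHom x y \); that this is a \( \Category V \)\=/enriched functor follows directly from extranaturality of \( m^2 \) and the unitality and associativity diagrams. Given a \( \Category V \)\=/category \( \Category D \) and a \( \Category V \)\=/enriched functor \( G \From \Category C \to \iota\Category D \), one keeps the components of \( G \) and uses \emph{normality of the target} \( \iota\Category D \) to define the required functor \( \uC \to \iota\Category D \) on a morphism \( f \From 1_\oplax \to \EnrichedHom x y \) as the morphism of \( \iota\Category D \) corresponding to the element \( G_{x,y} \circ f \); commutation of the factorisation triangle comes from naturality of the components of \( G \) and its unit axiom, uniqueness comes again from normality of \( \iota\Category D \), and the two-dimensional part is identical (a natural \( \Category V \)\=/enriched transformation into a normal enriched category being the same datum, by the mechanism of the previous paragraph, as a natural \( \Category V \)\=/transformation). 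Since \( \mathrm U\,\iota \) is isomorphic to \( \IdentityFunctor_{\VCats} \), the adjunction is reflective, which together with the \( 2 \)\=/fully faithfulness proves that \( \VCats \) is a full reflective \( 2 \)\=/subcategory of \( \Cat_{\Category V} \).

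The only genuinely laborious points are the ones I have deferred to `standard arguments': that the \( m^2 \)\=/formula endows \( \uC \) with a bifunctor compatible with the original structure maps---so that the canonical comparison is indeed a \( \Category V \)\=/enriched functor---and the diagrammatic verifications of the one- and two-dimensional universal properties. These are conceptually harmless, being the analogues for normal oplax monoidal structures of the classical strong-monoidal computations together with bookkeeping involving the comonoid \( 1_\oplax \) \UnskipRef{lemma: 1 is a comonoid}. The step requiring real care is ensuring at each stage that it is normality of \( \Category V^\oplax \)---hence of the enriched categories arising as \( \iota\Category D \)---that makes the otherwise-absent naturality conditions automatic.
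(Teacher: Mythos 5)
Your proposal is correct and follows essentially the same route as the paper: the reflector is the forgetful \( 2 \)\=/functor \( \Cat_{\Category V} \to \VCats \) and the unit is the canonical comparison \( (\Category C, [-,-]) \to (\uC, [-,-]) \) obtained from the bifunctoriality result \UnskipRef{prop: EnrichedHom is a bifunctor}. The only difference is presentational: you check the adjunction via the one- and two-dimensional universal property of the unit and spell out the \( 2 \)\=/fully-faithfulness of the inclusion, whereas the paper simply observes that \( u_{\uC} \) and the image of \( u_{\Category C} \) under the reflector are identities.
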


\begin{proof}
	The reflector is simply given by forgetting the underlying
	category structure of a category enriched over \( \Category V \)
	and only remembering its underlying \( \Category V \)-categorical
	structure.

	This results in an endofunctor
	\( (\Category C, [-,-]) \mapsto (\uC, [-, -]) \)
	of \( \Cat_{\Category V} \).
	Using the construction described in the definition above,
	one obtains a unit functor
	\[
		\begin{tikzcd}
			(\Category C, [-,-])
			\ar[r, "u_{\Category C}"]
			& (\uC, [-,-])
		\end{tikzcd}
	\]
	which easily extends into a 2-natural map \( u \).
	Finally it is enough to observe that by construction,
	both \( u_{\uC} \) and \( \underline{u_{\Category C}} \) are
	identities for every \( \Category C \).
\end{proof}

\begin{remark}
	When \( \Category V^\oplax \) is normal,
	one has a 2-isomorphism
	\[
		\VCats = \mathsf{normal~}\Cat_{\Category V}
	\]
	between \( \Category V \)-categories and normal categories
	enriched in \( \Category V \).
	When \( \Category V^\oplax \) is not normal, one can no longer use
	the 2-category of \( \Category V \)-categories.
	In this case, it becomes natural to replace the familiar notion of
	\( \Category V \)-category with the notion of normal
	category enriched over \( \Category V \).
\end{remark}

\subsection{An example: the theory of operads}
\label{sec:operads}

Any category enriched over a monoidal
category constitutes an example of an oplax enriched monoidal category.
In the present subsection, we shall present a genuine example of enrichment for which
the oplax monoidal category is not strong.

Let \( \Category V^\otimes \) be a symmetric monoidal category
with countable coproducts.
Let \( \ReducedSequences(\Category V) \)
denote the category of reduced sequences
in \( \Category V \) \ie the category whose objects are
sequences \( M(1), M(2), \dots,  \) of objects of \( \Category V \)
indexed from \( 1 \).

The category of sequences \( \ReducedSequences(\Category V) \)
admits a (normal) oplax monoidal structure
\cite[2.17]{doi:10.1007/s40062-012-0007-2}
with unit
\[
	1_{\Insertion}
	\coloneqq (1_{\Category V}, \emptyset_{\Category V},
	\emptyset_{\Category V}, \dots)
\]
and whose maps \( \Insertion^1 \) and \( \Insertion^2 \)
are given by
\[
	(M \Insertion N)(n) \coloneqq
	\coprod_p
	\coprod_{n_1 + \dots + n_p = n}
	M(p) \otimes_{\Category V} N(n_1)
	\otimes_{\Category V} \dots \otimes_{\Category V}
	N(n_p)
\]
\begin{multline*}
	(M \Insertion N \Insertion O)(n)
	\coloneqq
	\coprod_{p, q}
	\coprod_{p_1 + \dots + p_q = p}
	\coprod_{n_1 + \dots + n_p = n}
	M(q) \otimes_{\Category V} N(p_1)
	\otimes_{\Category V} \dots
	\otimes_{\Category V} N(p_q)
	\\
	\otimes_{\Category V} O(n_1)
	\otimes_{\Category V} \dots \otimes_{\Category V}
	O(n_p)
\end{multline*}
where all integers in the formulæ are positive.
The higher maps \( \Insertion^n \) can be obtained via
the combinatorics of the wreath product of planar trees
\cite{Cooperads}.
The structural decomposition natural transformations are obtained
by permutation of factors using the distributivity of coproducts over bifunctors.

A monoid in \( \ReducedSequences(\Category V)^\Insertion \)
is a reduced planar monochromatic operad in \( \Category V \).
In the case where the tensor structure of \( \Category V \)
commutes with countable coproducts, the oplax monoidal structure
on reduced sequences becomes strong.
It is a fundamental tool in the theory of operads.

Let \( \Category C^\otimes \) be a
monoidal \( \Category V \)\=/category with
enrichment bifunctor
\( \langle - , - \rangle \).
Then \( \Category C \) is naturally a
\( \ReducedSequences(\Category V) \)-category
with enrichment bifunctor
\[
	\EnrichedHom{x}{y}(n) \coloneqq
	\left\langle x^{\otimes_{\mathcal C} n}, y \right\rangle
\]
with unit maps \( 1_{\Insertion} \to \EnrichedHom{x}{x} \)
given by the unit map of the \( \Category V \)\=/enrichment
of \( \Category C \) and whose composition maps
\[
	\EnrichedHom{y}{z} \Insertion \EnrichedHom{x}{y}
	\longrightarrow \EnrichedHom{x}{z}
\]
arise simply from the composition maps
\[
	\left\langle y^{\otimes_{\Category C} p}, z \right\rangle
	\otimes_{\Category V}
	\left\langle x^{\otimes_{\Category C} n_1}, y \right\rangle
	\otimes_{\Category V}
	\dots
	\otimes_{\Category V}
	\left\langle x^{\otimes_{\Category C} n_p}, y \right\rangle
	\longrightarrow
	\left\langle x^{\otimes_{\Category C} n} , z \right\rangle
\]
for the tensor structure of \( \Category C \).

For \( P \) a reduced planar monochromatic operad in \( \Category V \)
viewed as a monoid in \( \ReducedSequences(\Category V)^\Insertion \),
a morphism of monoids
\[
	P \longrightarrow \EnrichedHom{\Lambda}{\Lambda}
\]
is equivalent to endowing the object
\( \Lambda \in \Objects(\Category C) \) with a \( P \)-algebra
structure.
The theory of oplax enriched categories thus allows one to
extend the theorem `algebras over operads can be viewed
as representations of monoids’
\cite[B.1.2]{doi:10.1007/s40062-019-00252-1}
to the case of operads in
symmetric monoidal categories that are not closed.

We have chosen to describe the case of reduced planar monochromatic
operads for its simplicity.
The same arguments extend directly to the general case of
symmetric coloured operads with operations in arity \( 0 \)
\cite{Cooperads}, though the formulæ for the definition
of \( \Insertion \) become more involved.

\section{From \( \Category V \) to \( \Cat_{\Category V} \)}
\label{sec:V to Cat_V}

Now that we have defined the notion of enrichment
over a given oplax monoidal category \( \Category V \), 
we shall discuss how the enrichment structure varies
with the enrichment base.
A standard result for
categories enriched over a monoidal category states
that a lax monoidal functor
\( f \From \Category U \to \Category V \) between two 
monoidal categories induces a \( 2 \)\=/functor
\( f_\ast \From \UCats \to \VCats \) 
between the associated categories of (normal) enriched categories
\cite{MR2177301, doi:10.1007/978-3-642-99902-4_22}.
This result can be enhanced
to the case where
\( \Category U \) and \( \Category V \) are oplax monoidal
(and the enrichments may not be normal).
In this section, we shall construct a \( 2 \)\=/functor
\( \Cat_{(-)} \From \Oplax \to \TwoCat \).
The action of this \( 2 \)\=/functor
can be summarised in the following
\[
	\begin{tikzcd}[sep=large]
		\Category U
		\ar[r, bend left=40, "f"{name="f"}]
		\ar[r, bend right=40, "g"{below, name="g"}]
		& \Category V
		\arrow[Rightarrow, shorten <= 2pt,
		shorten >= 2pt, "\alpha", from="f", to="g"]
	\end{tikzcd}
	\qquad \longmapsto \qquad
	\begin{tikzcd}[sep=large]
		\Cat_{\Category U}
		\ar[r, bend left=40, "f_*"{name="f"}]
		\ar[r, bend right=40, "g_*"{below, name="g"}]
		& \Cat_{\Category V}
		\arrow[Rightarrow, shorten <= 2pt,
		shorten >= 2pt, "\alpha_*", from="f", to="g"]
	\end{tikzcd}
\]
diagram.

After reviewing its definition
\SectionRef{sec:pushforward}, we shall
show that it is equipped with a lax monoidal structure,
corresponding to the external product of enriched categories
(\ie the operation that, given two categories enriched over
two different oplax monoidal categories, constructs a category
enriched over the product of the latter)
\SectionRef{sec:external_product}.
Finally we shall deduce that when \( \Category D^{\lax, \oplax} \)
is lax-oplax duoidal, the 2-category of \( \Category D^\oplax \)-enriched 
categories naturally inherits a lax monoidal structure
\SectionRef{sec: D-enriched}.

\subsection{Pushforward}
\label{sec:pushforward}

\paragraph{Pushforward of lax functors}
Let \( \Category U^\psi, \Category V^\oplax \) be
two oplax monoidal categories and let
\( \ChangeEnrichment \From \Category U \to \Category V \)
be a functor endowed with a lax structure
\( \{l^n\}_{n \geq -1} \).
We shall describe a \( 2 \)-functor
\[
	f_\ast \From \Cat_{\Category U} \to \Cat_{\Category V}
\]
between the corresponding 2-categories of enriched categories.
Let \( (\Category C, \EnrichedHom{-}{-}, \{m^n\}) \)
be a category enriched over \( \Category U \). From it,
we can define a category \( (\ChangeEnrichment_*\Category C,
\InducedHom{-,-}, \{m_\ChangeEnrichment^n\}) \) enriched
over \( \Category V \) as follows:
\begin{itemize}
	\item its underlying category
		\(\ChangeEnrichment_\ast\Category C \) is
		the original category \( \Category C \);
	\item the enrichment bifunctor is given by
		\[
			\InducedHom{-,-} \coloneqq \ChangeEnrichment\circ[-,-]
		\]
		\ie it is the composition of the enrichment bifunctor
		of \( \Category C \) and the lax functor
		\( \ChangeEnrichment \);
	\item
		the composition maps \( m^n_\ChangeEnrichment \)
		are defined, using the functor \( \ChangeEnrichment \)
		and its lax monoidal structure, as
		\[
			\begin{tikzcd}
				m^n_\ChangeEnrichment \From\,
				{[-, \dots, -]}_{\oplax^n\,\ChangeEnrichment}
				\rar["l^n"]
				& {[-, \dots, -]}_{\ChangeEnrichment \,\psi^n}
				\rar["\ChangeEnrichment(m^n)"]
				& \InducedHom{-,-}
			\end{tikzcd}
		\]
		for any integer \( n \geq -1 \).
\end{itemize}
The (extra)naturality conditions of $m^n_\ChangeEnrichment$ 
follow straightforwardly from the (extra)naturality of $m^n$
and the naturality of $l^n$. 
The morphism \( m_\ChangeEnrichment^0 \) verifies the counitality
condition of the structure maps of a category enriched over
\( \Category V \), \ie it identifies with the counit structure
map \( \oplax^0 \to \IdentityFunctor \), as a direct consequence
of the unitality condition of the lax monoidal structure of
\( \ChangeEnrichment \). Moreover, this family of morphisms also
verifies the additivity conditions of the structure maps of
a category enriched over \( \Category V \),
as a consequence of the additivity conditions of
the lax monoidal structure \( \{l^n\}_{n \geq -1} \)
of \( \ChangeEnrichment \).

Next, let us define the action of \( \ChangeEnrichment_* \)
on \( 1 \)\=/morphisms, \ie enriched functors.
Given a \( \Category U \)\=/enriched functor 
\( F \From \Category B \to \Category C \)
between two categories \( (\Category B, [-,-], \{m^p\}) \)
and \( (\Category C, \langle-,-\rangle, \{n^p\}) \)
enriched over \( \Category U \), we define
the \( \Category V \)\=/enriched functor
\[
	\ChangeEnrichment_*F \From \ChangeEnrichment_*\Category B
	\to \ChangeEnrichment_*\Category C
\]
as the same underlying functor
\( F \From \Category B \to \Category C \),
equipped with the natural transformation
with components
\[
	(\ChangeEnrichment_* F)_{x,y} \coloneqq f(F_{x,y}) \From \InducedHom{x,y}
	\to \langle \ChangeEnrichment_*F(x),
	\ChangeEnrichment_*F(y) \rangle_\ChangeEnrichment
\]
for any pair of objects \( x, y \) in \( \Category B \).
The naturality of the lax monoidal structure
\( l^n \) on the functor \( \ChangeEnrichment \),
together with the fact that \( F \) is
a \( \Category U \)\=/enriched functor ensures that
the above does define a \( \Category V \)\=/enriched functor.
On top of that, it is clear from the functoriality
of \( \ChangeEnrichment\) that \( \ChangeEnrichment_\ast \)
intertwines the composition of \( \Category U \)\=/enriched
and \( \Category V \)\=/enriched functors.
Finally, a natural \( \Category U \)\=/enriched transformation
\( \alpha \From F \Rightarrow G \) between two natural
\( \Category U \)\=/enriched functors 
\( F,G \From \Category B \to \Category C \) is
automatically a natural \( \Category V \)\=/enriched
transformation between the \( \Category V \)\=/enriched
functors 
\( \ChangeEnrichment_\ast F,\ChangeEnrichment_\ast G
\From \ChangeEnrichment_\ast \Category B
\to \ChangeEnrichment_\ast \Category C \).
Indeed, using the above definition, one finds
that the \( \Category V \)\=/naturality condition
between these \( \Category V \)\=/enriched functors
is nothing but the image of the \( \Category U \)\=/naturality
condition (by functoriality of \( \ChangeEnrichment \)),
so that we can simply define
\( \ChangeEnrichment_\ast\alpha = \alpha \). 

It is also clear that this definition ensures
that \( \ChangeEnrichment_\ast \) preserves
the vertical and horizontal compositions of natural
\( \Category U \)\=/enriched and \( \Category V \)\=/enriched 
transformations. As a consequence, we proved the claim
made at the beginning of the present section that any lax monoidal
functor \( f \From \Category U \to \Category V \)
between two oplax monoidal categories induces
a \( 2 \)\=/functor
\( f_* \From \Cat_{\Category U} \to \Cat_{\Category V} \),
referred to as the pushforward of $f$.

\paragraph{Pushforward of a monoidal transformation}

Now let \( t \From f \Rightarrow g \) be a monoidal natural
transformation between two functors
\( f,g \From \Category U^\psi \to \Category V^\oplax \)
with lax monoidal structure \( \{k^n\}_{n \geq -1} \)
and \( \{l^n\}_{n \geq -1} \) respectively.
We can define a \( 2 \)\=/natural transformation
\[
	t_* \From f_* \Rightarrow g_*
\]
between the previously defined pushforward
of the lax monoidal functors \( f \) and \( g \)
as follows: it assigns to any category
\( (\Category C, [-,-], \{m^n\}) \) enriched 
over \( \Category U \), a \( \Category V \)\=/enriched
functor
\[
    t_{*\,\Category C} \From f_*\Category C
    \to g_*\Category C
\]
whose underlying functor is the identity functor,
and which is equipped with natural transformations
with components
\[
	(t_{*\,\Category C})_{x,y} \coloneqq
	t_{\EnrichedHom{x}{y}} \From
	\EnrichedHom{x}{y}_f \to \EnrichedHom{x}{y}_g
\]
for any pairs of objects \( x, y \) of \( \Category C \).
Indeed, the above does define a \( \Category V \)\=/enriched
functor due to
the monoidality (and naturality) of \( t \).
Moreover, upon using the previous definition,
one can check that \( t_\ast \) satisfies both
the \( 1 \)- and \( 2 \)- naturality conditions.

\begin{proposition}
	The operation which sends any oplax monoidal category
	\( \Category V \) to the \( 2 \)\=/category \( \Cat_{\Category V} \),
	and any lax monoidal functor and monoidal natural transformation
	to their pushforwards defines a \( 2 \)\=/functor
	\[
		\begin{tikzcd}[sep=large]
			\Oplax
			\ar[r, "\Category V \mapsto \Cat_{\Category V}"]
			& \TwoCat
		\end{tikzcd}
	\]
	from the \( 2 \)\=/category of oplax monoidal categories
	to the one of \( 2 \)\=/categories.
\end{proposition}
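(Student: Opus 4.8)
The plan is to observe that all the relevant data have already been constructed in the preceding subsection: the pushforward $\ChangeEnrichment_\ast \From \Cat_{\Category U} \to \Cat_{\Category V}$ of a lax monoidal functor $\ChangeEnrichment$, the $2$\=/natural transformation $t_\ast \From \ChangeEnrichment_\ast \Rightarrow g_\ast$ attached to a monoidal natural transformation $t \From \ChangeEnrichment \Rightarrow g$, together with the verifications that each $\ChangeEnrichment_\ast$ is a $2$\=/functor, each $t_\ast$ a $2$\=/natural transformation, and that $\ChangeEnrichment_\ast$ and $t_\ast$ behave as expected under composition of enriched functors and of enriched natural transformations. What remains is solely the $2$\=/functoriality of the assignment $\Category V \mapsto \Cat_{\Category V}$ itself: preservation of identity $1$\=/cells and $2$\=/cells, of composition of $1$\=/cells, and of vertical composition and whiskering of $2$\=/cells. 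I would dispatch these in that order, all being short diagram chases.

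First I would treat identities. When $\ChangeEnrichment = \IdentityFunctor_{\Category V}$ carries its identity lax monoidal structure, the defining formulae give $\InducedHom{-,-} = \EnrichedHom{-}{-}$ and $m^n_{\ChangeEnrichment} = m^n$ on every enriched category, and identity structural components on enriched functors and enriched natural transformations, so $\ChangeEnrichment_\ast = \IdentityFunctor_{\Cat_{\Category V}}$; likewise $(\IdentityMorphism_{\ChangeEnrichment})_\ast$ has components the identities $(\IdentityMorphism_{\ChangeEnrichment})_{\EnrichedHom{x}{y}}$, hence is the identity $2$\=/cell. Next I would handle composition of $1$\=/cells. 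For composable lax monoidal functors $\ChangeEnrichment \From \Category U^\psi \to \Category V^\oplax$ and $g \From \Category V^\oplax \to \Category W^\pi$, I would recall that the lax structure of $g\ChangeEnrichment$ is the composite of $l^n_g$ whiskered by $\ChangeEnrichment$ with $g(l^n_{\ChangeEnrichment})$; then $(g\ChangeEnrichment)_\ast\Category C$ and $g_\ast(\ChangeEnrichment_\ast\Category C)$ visibly share the same underlying category and the same enrichment bifunctor $g\ChangeEnrichment \circ \EnrichedHom{-}{-}$, and their composition maps agree by expanding $m^n_{g\ChangeEnrichment}$ and using $g(m^n_{\ChangeEnrichment}) = g\ChangeEnrichment(m^n) \circ g(l^n_{\ChangeEnrichment})$ together with functoriality of $g$. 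Agreement on enriched functors and enriched natural transformations is then automatic, since a pushforward acts on the structural components simply by applying the underlying functor, and that operation is strictly functorial.

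Finally I would address the $2$\=/cells. The component of $t_\ast$ at an enriched category $\Category C$ is the identity functor equipped with the components $t_{\EnrichedHom{x}{y}}$, so preservation of vertical composition reduces to $(s \VerticalComposition t)_{\EnrichedHom{x}{y}} = s_{\EnrichedHom{x}{y}} \circ t_{\EnrichedHom{x}{y}}$, and preservation of whiskering on either side reduces to the fact that whiskering a monoidal natural transformation by a lax monoidal functor acts on components by applying that functor---exactly as does whiskering the corresponding $2$\=/natural transformation between pushforwards---so that, in conjunction with the composition axiom for $1$\=/cells already established, horizontal composition of $2$\=/cells is preserved as well. This exhibits $\Category V \mapsto \Cat_{\Category V}$ as a $2$\=/functor $\Oplax \to \TwoCat$.

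The only step requiring any real care is the composition axiom $(g\ChangeEnrichment)_\ast = g_\ast \circ \ChangeEnrichment_\ast$ at the level of the maps $m^n$: although it is a purely formal diagram chase, it is where the interaction of the two lax structures must be unwound. If one prefers, the comparison can be shortened by invoking the reconstruction theorems \UnskipRef{prop:truncation} and \UnskipRef{prop:truncation_lax_functors} so as to reduce it to the arities $n = -1$ and $n = 1$.
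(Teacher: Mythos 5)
Your proposal is correct and takes essentially the same route as the paper's own proof: the paper likewise observes that the pushforwards of lax monoidal functors and of monoidal natural transformations were already shown to be \( 2 \)\=/functors and \( 2 \)\=/natural transformations, and that what remains (preservation of identities, of composition of \( 1 \)\=/cells, and of vertical and horizontal composition of \( 2 \)\=/cells) follows by rewriting the definitions. Your explicit check that \( m^n_{g\ChangeEnrichment} \) agrees with the composite structure maps is exactly the kind of routine unwinding the paper leaves implicit, so there is nothing to add.
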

\begin{proof}
	As we have previously proven that
	the pushforward of lax monoidal functors
	and monoidal natural transformations are respectively
	\( 2 \)\=/functors and \( 2 \)\=/natural transformations,
	what is left to show is that the pushforward preserves
	the composition of \( 1 \)\=/cells, preserves the vertical
	and horizontal compositions of \( 2 \)\=/cells and maps 
	identities to identities. Doing so simply amounts to
	re-writing the various definitions detailed above and
	checking that they are compatible with the compositions
	in \( \Oplax \) and in \( \TwoCat \), and does not
	require the use of any particular property other than
	the associativity of the composition of morphisms in
	a category.
\end{proof}

\subsection{External product of enriched categories}
\label{sec:external_product}

The category \( \Oplax \) admits products.
If \( \Category U^\psi \) and \( \Category V^\oplax \) are two
oplax monoidal categories, their product is given by the
category \( \Category U \times \Category V \) with
structural maps
\[
	\begin{tikzcd}
		(\Category U \times \Category V)^{n+1}
		\rar["\IsCanonicallyIsomorphicTo"]
			& \Category U^{n+1} \times \Category V^{n+1}
			\rar["\psi^n \times \oplax^n"]
				&[5ex] \Category U \times \Category V
	\end{tikzcd}
\]
for every \( n \geq -1 \) and decomposition natural transformations
defined similarly.

\begin{definition}
	The external product is the 2-functor
	\[
		\begin{tikzcd}[sep=large]
			\Cat_{\Category U} \times \Cat_{\Category V}
			\ar[r, "\boxtimes"]
			& \Cat_{\Category U \times \Category V}
		\end{tikzcd}
	\]
	taking as input a \( \Category U \)-enriched category
	\( (\Category B, [-,-], \{l^n\}) \) and
	a \( \Category V \)-enriched category
	\( (\Category C, \langle-,-\rangle, \{m^n\}) \) and returning
	the \( \Category U \times \Category V \)-enriched category
	\( \Category B \boxtimes \Category C \) defined as
	the product \( \Category B \times \Category C \) with
	enrichment bifunctor
	\[
		\begin{tikzcd}
			(\Category B\Op \times \Category C\Op)
			\times (\Category B \times \Category C)
			\ar[r, "\IsCanonicallyIsomorphicTo"]
			& (\Category B\Op \times \Category B)
			\times (\Category C\Op \times \Category C)
			\ar[r, "{[-,-] \times \langle-,-\rangle}"]
			&[7ex] \Category B \times \Category C
		\end{tikzcd}
	\]
	and composition maps \( \{l^n \times m^n\} \).

	The external product of a \( \Category U \)\=/enriched
	functor (resp. natural transformation)
	with a \( \Category V \)\=/enriched functor (resp. natural
	transformation) is defined similarly.
\end{definition}

\begin{proposition}\label{prop:pushforward_monoidal}
	The external product of enriched categories endows
	\[
		\begin{tikzcd}[sep=large]
			\Oplax
			\ar[r, "\Category V \mapsto \Cat_{\Category V}"]
			& \TwoCat
		\end{tikzcd}
	\]
	with the structure of a lax monoidal \( 2 \)\=/functor.
\end{proposition}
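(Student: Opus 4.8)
The plan is to produce the data and verify the axioms of a lax monoidal \(2\)-functor \((\Cat_{(-)}, \boxtimes, \epsilon)\) for the cartesian monoidal structures on \(\Oplax\) and on \(\TwoCat\); that these are monoidal \(2\)-categories under products is immediate, and the underlying \(2\)-functor has just been constructed. The external product \(\boxtimes\) provides the binary constraint
\[
	\Cat_{\Category U} \times \Cat_{\Category V} \longrightarrow \Cat_{\Category U \times \Category V},
\]
and for the nullary constraint one takes the \(2\)-functor \(\epsilon \From \mathbf 1 \to \Cat_{\ast}\), where \(\ast\) is the terminal oplax monoidal category (the monoidal unit of \(\Oplax\)) and \(\mathbf 1\) the terminal \(2\)-category, sending the unique object to the terminal category with its unique, trivial enrichment over \(\ast\). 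One notes in passing that \(\Cat_{\ast} \cong \Cat\), the enrichment data over \(\ast\) being uniquely determined, so that \(\epsilon\) merely names the terminal category.

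The first thing to check is that \(\boxtimes\) is \(2\)-natural in the pair \((\Category U, \Category V)\): for lax monoidal functors \(f \From \Category U \to \Category U'\) and \(g \From \Category V \to \Category V'\), the square relating \(\boxtimes\), \(f_\ast \times g_\ast\) and \((f \times g)_\ast\) must commute, and likewise on enriched functors, on enriched natural transformations, and for monoidal natural transformations \(f \Rightarrow f'\), \(g \Rightarrow g'\). Since \(\boxtimes\) introduces no data beyond the \emph{componentwise} one --- underlying category the product \(\Category B \times \Category C\), enrichment bifunctor \([-,-] \times \langle -,-\rangle\) precomposed with the canonical shuffle isomorphism, composition maps \(\{l^n \times m^n\}\) --- and since pushforward acts by postcomposition with the chosen lax functor and its structure maps, the whole verification reduces to one point: the lax monoidal structure borne by \(f \times g\) on the product oplax monoidal category is, canonically, the product \(\{l_f^n \times l_g^n\}\) of the two structures. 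To keep this manageable I would appeal to the reconstruction theorem for lax monoidal functors \UnskipRef{prop:truncation_lax_functors}, and to the truncation of the monoidality condition \UnskipRef{lemma:truncation_monoidality} for the \(2\)-cell part, so that every identity need only be confirmed for the arity \(-1\) and \(1\) structure maps, where it follows directly from the definition of products in \(\Oplax\).

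Next come the coherence axioms of a lax monoidal \(2\)-functor. For associativity one compares \((\Category B \boxtimes \Category C) \boxtimes \Category D\) with \(\Category B \boxtimes (\Category C \boxtimes \Category D)\): both are the triple product category up to the associativity isomorphism of \(\Cat^{\times}\), carry the triple external product of the enrichment bifunctors, and have composition maps \(\{(l^n \times m^n) \times k^n\}\), respectively \(\{l^n \times (m^n \times k^n)\}\); the associators of \(\Oplax^{\times}\) on the base and of \(\Cat^{\times}\) on the total category identify them, which is precisely the hexagon for \(\Cat_{(-)}\). The two unit axioms are handled symmetrically, using \(\Category B \boxtimes \epsilon(\ast) = \Category B \times \mathbf 1 \cong \Category B\) compatibly with \(\Category U \times \ast \cong \Category U\) and with the pushforward along the unitor of \(\Oplax^{\times}\). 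All these diagrams commute modulo only the canonical coherence isomorphisms inherited from the two cartesian structures, \(\boxtimes\) itself being non-invertible (whence the word \emph{lax}).

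I do not anticipate a genuine mathematical obstruction: the substance is simply that products behave the same way upstairs and downstairs, and the difficulty is organisational. The one place that repays care, and where I would concentrate the effort, is exactly the identification of the lax structure of a product of lax functors --- and of a product of monoidal natural transformations --- with the product of the structures; this is where the definitions of products in \(\Oplax\) must be unwound, and where the reconstruction results earn their keep by cutting the bookkeeping down to arities \(\leq 1\).
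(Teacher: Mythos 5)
Your write-up is correct in substance, but it is worth knowing that the paper does not actually carry out this verification: its proof consists of observing that the statement is the oplax analogue of the classical fact that the pushforward \( 2 \)\=/functor \( \Mon \to \TwoCat \), \( \Category V \mapsto \VCats \), is lax monoidal, and of delegating the diagram-chasing to the detailed proof in the monoidal case in Cruttwell's thesis \cite{phd:Cruttwell}, with the remark that the oplax case is essentially identical. What you do differently is to give the direct verification: exhibiting the unit constraint \( \mathbf 1 \to \Cat_{\ast} \) (with \( \Cat_\ast \cong \Cat \)), reducing the \( 2 \)\=/naturality of \( \boxtimes \) to the identification of the structure of a product of lax monoidal functors (and of monoidal transformations) with the componentwise one, and checking the associativity and unit coherence against the cartesian coherence isomorphisms of \( \Oplax \) and \( \TwoCat \). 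This is exactly the content the paper outsources, so your argument is more self-contained; the paper's route buys brevity and an explicit bridge to the existing enriched-category literature.

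One step should be repaired, though it does not endanger the proof. You propose to invoke the reconstruction theorem\UnskipRef{prop:truncation_lax_functors} and the truncation of monoidality\UnskipRef{lemma:truncation_monoidality} to cut the key identification down to arities \( -1 \) and \( 1 \). Both results apply only to lax monoidal functors with \emph{lax} monoidal source and oplax monoidal target (and to monoidal transformations between such functors); the paper's remark following the reconstruction theorem stresses that this is the only one of the four lax/oplax combinations for which truncation works. Here \( f \), \( g \) and \( f \times g \) are lax functors between \emph{oplax} monoidal categories, so neither result is available. Fortunately neither is needed: the product in \( \Oplax \) has componentwise structure maps at every arity, so the identity \( l^n_{f \times g} = l^n_f \times l^n_g \) (and its analogue for \( 2 \)\=/cells) holds by definition for all \( n \geq -1 \), and the rest of your argument goes through unchanged once this appeal is simply deleted.
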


\begin{proof}
	The above proposition can be seen as a direct generalisation 
	in the oplax setting
	of the standard result
	according to which the pushforward \( 2 \)\=/functor
	\[
		\begin{tikzcd}[sep=large]
			\Mon
			\ar[r, "\Category V \mapsto \VCats"]
			&[2ex] \TwoCat
		\end{tikzcd}
	\]
	from the \( 2 \)\=/category
	\( \Mon \) of monoidal categories and lax monoidal functors
	to the \( 2 \)\=/category
	\( \TwoCat \) of \( 2 \)\=/categories is lax monoidal. 
	The proof in the oplax case is essentially
	the same as in the monoidal case for which
	we refer the interested reader 
	to the detailed proof in
	Cruttwell's PhD thesis
	\cite{phd:Cruttwell}.
\end{proof}

\subsection{Lax monoidal structure on \( \Cat_{\Category D} \),
when \( \Category D \) is lax-oplax duoidal}
\label{sec: D-enriched}

When \( \Category D^{\lax, \oplax} \) is a lax-oplax duoidal category,
the 2-category of \( \Category D^\oplax \)-enriched categories
\( \Cat_{\Category D} \) can be endowed with a lax monoidal
structure whose structural 2-functors
\[
	\begin{tikzcd}
		\times_\lax^n \From\
		\Cat_{\Category D} \times \dots \times \Cat_{\Category D}
		\ar[r, "\boxtimes^n"]
		& \Cat_{\Category D \times \dots \times \Category D}
		\ar[r, "(\lax^n)_\ast"]
		& \Cat_{\Category D}
	\end{tikzcd}
\]
are obtained by using the external tensor product of categories and
the pushforward of the lax structure of \( \Category D \).

\begin{theorem}
	\label{corollary:lax oplax duoidal}
	Let \( \Category D^{\lax,\oplax} \) be a lax-oplax 
	duoidal category. Then the lax structure \( \lambda \)
	induces a lax monoidal 2-category structure \( \times_\lambda \)
	on the 2-category
	\( \Cat_{\Category D} \) of categories enriched
	over \( \Category D^\oplax \)
	(\ie it becomes a lax monoid in \( \TwoCat \)).
\end{theorem}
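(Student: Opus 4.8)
Recall that the structural $2$\=/functors are \( \times_\lax^n = (\lax^n)_* \circ \boxtimes^n \), where \( \boxtimes^n \) is the external product and \( (\lax^n)_* \) is the pushforward along the lax monoidal functor \( \lax^n \). The plan is to recognise \( \times_\lax \) as the image, under the lax monoidal \( 2 \)\=/functor \( \Cat_{(-)} \From \Oplax \to \TwoCat \), of a lax monoid structure carried by the object \( \Category D^\oplax \) in the cartesian \( 2 \)\=/category \( \Oplax \). Thus the two ingredients to assemble are: a lax monoid structure on \( \Category D^\oplax \in \Oplax \) extracted from the duoidal data, and the preservation of lax monoids by \( \Cat_{(-)} \).

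First I would unwind the definition of a lax-oplax duoidal category \UnskipRef{def:lax-oplax} to exhibit \( \Category D^\oplax \) as a lax monoid in \( \Oplax \): its multiplication \( 1 \)\=/cells are the functors \( \lax^q \From (\Category D^\oplax)^{\times(q+1)} \to \Category D^\oplax \), which are lax monoidal via \( \{\chi^{p,q}\}_p \) by the first bullet of \UnskipRef{def:lax-oplax} (in particular, at \( q = -1 \), the object \( 1_\lax \) together with the maps \( \oplax^p(1_\lax, \dots, 1_\lax) \to 1_\lax \) is the lax monoidal functor \( * \to \Category D^\oplax \)); its associator and unit \( 2 \)\=/cells are the structure maps \( \lax^a(-, \lax^b, -) \Rightarrow \lax^{a+b} \) and \( \IdentityFunctor_{\Category D} \Rightarrow \lax^0 \) of \( \Category D^\lax \) viewed as a (lax) monoidal structure. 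The key point to check is that these natural transformations are \emph{monoidal} --- hence genuine \( 2 \)\=/cells of \( \Oplax \) --- with respect to the \( \chi \)\=/structures: this is exactly the content of the second bullet of \UnskipRef{def:lax-oplax}, since the decomposition axiom of the oplax monoidal functors \( \oplax^p \From (\Category D^\lax)^{\times(p+1)} \to \Category D^\lax \) is precisely the compatibility of \( \chi \) with the \( \lax \)\=/associators. The lax-monoid axioms (counitality, parallel and sequential decomposition for the family \( \{\lax^q\} \)) then hold because their underlying diagrams are the defining axioms of \UnskipRef{Definition: oplax monoidal category} applied to \( \Category D\Op \).

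Next I would apply the lax monoidal \( 2 \)\=/functor \( \Cat_{(-)} \From \Oplax \to \TwoCat \) of \UnskipRef{prop:pushforward_monoidal}. A lax monoidal \( 2 \)\=/functor between cartesian \( 2 \)\=/categories sends lax monoids to lax monoids --- the \( 2 \)\=/categorical upgrade of the fact that lax monoidal functors preserve monoids --- and one reads off the transported structure on \( \Cat_{(-)}(\Category D^\oplax) = \Cat_{\Category D} \): its structural \( 2 \)\=/functors are \( (\lax^n)_* \) precomposed with the comparison \( 2 \)\=/functors \( \boxtimes^n \), i.e.\ exactly \( \times_\lax^n \); its associator \( 2 \)\=/cells are obtained by pasting the images \( (\lax^a(-, \lax^b, -))_* \Rightarrow (\lax^{a+b})_* \) --- using \( 2 \)\=/functoriality of the pushforward in the lax monoidal functor, so that \( (\lax^a)_* \circ (\IdentityFunctor, \dots, (\lax^b)_*, \dots, \IdentityFunctor) = (\lax^a(-, \lax^b, -))_* \) since \( (\IdentityFunctor_{\Category D})_* = \IdentityFunctor_{\Cat_{\Category D}} \) --- with the structural \( 2 \)\=/cells of \( \boxtimes \), which are canonical isomorphisms as \( \boxtimes \) is strictly compatible with the bracketings \( (\Category U \times \Category V) \times \Category W \cong \Category U \times (\Category V \times \Category W) \); its unit \( 2 \)\=/cell comes from \( (\IdentityFunctor_{\Category D})_* \Rightarrow (\lax^0)_* \) and the unit comparison of \( \boxtimes \). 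The counitality, parallel- and sequential-decomposition axioms for this lax monoid reduce, via the coherence of \( \Cat_{(-)} \) and the near-strict coherence of \( \boxtimes \), to the corresponding axioms for the lax monoid \( \Category D^\oplax \) established above. Restricted to one-object enriched categories this recovers the lax monoidal structure on \( \mathsf{Mon}(\Category D^\oplax) \) of \UnskipRef{prop:monoids=lax+comonoid=oplax}.

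I expect the main obstacle to be the first step: confirming that the \( \lax \)\=/associators and the monad unit of \( \Category D^\lax \) are monoidal natural transformations for the \( \chi \)\=/lax-monoidal-functor structures --- equivalently, that the two bullets of \UnskipRef{def:lax-oplax} genuinely amount to a lax monoid in \( \Oplax \). This is where the mixed duoidal coherence is paid for and constitutes the real content. Everything after that is \( 2 \)\=/categorical bookkeeping: spelling out precisely what a lax monoid in \( \TwoCat \) requires and verifying that the pasted comparison \( 2 \)\=/cells obey it; no conceptual difficulty arises, only a proliferation of diagrams. If the abstract route proved awkward, the fallback would be to check directly that \( \times_\lax^n(\Category C_0, \dots, \Category C_n) \) --- the product category \( \prod_i \Category C_i \) with hom-objects \( \lax^n(\EnrichedHom{c_0}{c_0'}, \dots, \EnrichedHom{c_n}{c_n'}) \) and composition given by \( \chi^{1,n} \) followed by \( \lax^n \) applied to the composition maps of the \( \Category C_i \) --- is enriched over \( \Category D^\oplax \), using the explicit axioms of \( \chi \); but that route is considerably longer.
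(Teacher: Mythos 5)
Your proposal is correct and follows essentially the same route as the paper's proof: identify \( \Category D^{\lax,\oplax} \) as a lax monoid in \( \Oplax \), use the lax monoidal structure of \( \Cat_{(-)} \From \Oplax \to \TwoCat \) given by the external product, and invoke the fact (which the paper attributes to Day and Street) that lax monoidal \( 2 \)\=/functors send lax monoids to lax monoids. The only difference is that you propose to verify in detail the two steps the paper delegates to citations, namely that the duoidal axioms really assemble into a lax monoid in \( \Oplax \) and that the transported structure is exactly \( \times_\lax \).
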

\begin{proof}
	Lax-oplax duoidal categories can be identified with
	the lax monoids of the 2-category \( \Oplax \), meanwhile lax monoidal
	categories are the lax monoids of \( \TwoCat \).
	In the previous section we explained how the external
	tensor product endows the functor
	\( \Category V \mapsto \Cat_{\Category V} \) with a lax
	structure.

	Day and Street have shown that lax monoidal \( 2 \)\=/functors
	map lax monoids to lax monoids
	\cite{zbMATH01963527}, so that applying the pushforward
	\( 2 \)\=/functor to \( \Category D \) endows
	\( \Cat_{\Category D} \)
	with a structure of lax monoid in \( \TwoCat \)
	\ie \( \Cat_{\Category D} \) is a lax monoidal \( 2 \)\=/category.
\end{proof}

\begin{remark}
	In the case of categories enriched over \( \Category D^\oplax \)
	with only one object (\ie monoids in \( \Category D^\oplax \)),
	one recovers the fact that the category of monoids in
	\( \Category D^\oplax \) inherits a lax monoidal structure
	from \( \Category D^\lax \)
	\UnskipRef{prop:monoids=lax+comonoid=oplax}.
\end{remark}

\begin{remark}
	Note that the notion of lax-strong duoidal category
	identifies with the one of lax monoid in \( \Mon \). 
	Hence, the fact that the \( 2 \)\=/category
	of categories enriched
	over  a lax-strong duoidal category is a lax monoidal
	\( 2 \)\=/category can be readily obtained from the fact 
	that the pushforward \( 2 \)\=/functor
	\[
		\begin{tikzcd}[sep=large]
			\Mon
			\ar[r, "\Category V \mapsto \Cat_{\Category V}"]
			& \TwoCat
		\end{tikzcd}
	\]
	maps lax monoids to lax monoids. The alternative
	notion of strong-oplax duoidal categories, \ie 
	a lax-oplax duoidal category for which the lax
	monoidal structure is strong monoidal, can be 
	equivalently characterised
	as a pseudo-monoid in  \( \Oplax \). Hence, 
	from the fact that the pushforward \( 2 \)\=/functor
	\[
		\begin{tikzcd}[sep=large]
			\Oplax
			\ar[r, "\Category V \mapsto \Cat_{\Category V}"]
			& \TwoCat
		\end{tikzcd}
	\]
	maps pseudo-monoids to pseudo-monoids follow that
	the  \( 2 \)\=/category
	of categories enriched
	over  a strong-oplax duoidal category is strong monoidal.
\end{remark}

\section{Motivating example:
	\( (\Ring\Enveloping, \Ring\Enveloping) \)\=/bimodules}
\label{sec:Re-bimod}

Let \( \Ring \) be a ring and let
\( \Ring\Enveloping \coloneqq \Ring \otimes_\Integers \Ring\Op \) be
its enveloping ring.
In this section we shall endow the category of
\( (\Ring\Enveloping, \Ring\Enveloping) \)-bimodules with
a lax-strong duoidal structure whose bimonoids are the bialgebroids.

\subsection{Strong monoidal structure on
	\( (\Ring\Enveloping, \Ring\Enveloping) \)-bimodules}
The category of bimodules over the enveloping ring \( \Ring\Enveloping \)
can naturally be endowed with a tensor structure
\( \otimes_{\Ring\Enveloping} \) which can be lifted into a
strong monoidal structure
\SectionRef{sec:enrichment over monoidal categories}.

Let us recall that
(given a choice of strong monoidal structure
on the category of abelian groups)
one can describe the strong
monoidal structure on the category of \( \Ring\Enveloping \)-bimodules
using integrals
\cite{doi:10.1007/BF02685882, doi:10.2969/jmsj/02930459}
\cite[IX.6]{doi:10.1007/978-1-4757-4721-8}.
One can write
\[
	A \otimes_{\Ring\Enveloping} B \otimes_{\Ring\Enveloping} \dots
	\otimes_{\Ring\Enveloping} Z \coloneqq \int_{(r_1,s_1),\dots,(r_n,s_n)}
	A_{(r_1,s_1)} \otimes {_{(r_1, s_1)}}B_{(r_2,s_2)} \otimes
	\dots \otimes {_{(r_n, s_n)}}Z
\]
where 
\( (A, B, \dots, Z) \) represents any given sequence
of \( (\Ring\Enveloping,\Ring\Enveloping) \)\=/bimodules.
Here, the integral is taken over elements
\( (r,s) \in \Ring\Enveloping \) whose position as a subscript of
a bimodule indicates whether the left or the right
\( \Ring\Enveloping \)-action is involved in the computation of the
integral.

\begin{remark}
	\label{rmk: enrichment over Re-bimodules}
	Given an \( \Ring\Enveloping \)-ring \( A \),
	\ie a ring \( A \) together with a morphism
	of rings \( \Ring\Enveloping \to A \), every \( A \)-module
	admits a structure of \( (\Ring, \Ring) \)-bimodule.
	Thus, the category of \( A \)\=/modules can be naturally
	enriched over the category of
	\( (\Ring\Enveloping,\Ring\Enveloping) \)\=/bimodules.
	For each pair of \( A \)-modules \( (M, N) \),
	the enrichment bifunctor is given by
	\( \Hom{\Integers}{M}{N} \), with
	\( (\Ring\Enveloping,\Ring\Enveloping) \)\=/bimodule
	structure
	\[
		\big[(r, r') \cdot f \cdot (s, s')\big](m)
		\coloneqq r \cdot f(s \cdot m \cdot s') \cdot r'
	\]
	for every pairs \( (r,r'),(s,s') \in \Ring\Enveloping \),
	every \( f \From M \to N \) 
	and every element \( m \in M \).

	Notice that in general this enrichment is not normal:
	the set of weak morphisms between two \( A \)-modules
	\( M \) and \( N \) is in bijection with
	the set \( \Hom{\Ring\Enveloping} M N \).
\end{remark}

\subsection{Lax monoidal structure on
	\( (\Ring\Enveloping, \Ring\Enveloping) \)-bimodules}
On top of the (strong) monoidal structure \( \otimes_{\Ring\Enveloping} \),
the category of \( \Ring\Enveloping \)\=/bimodules
can also be endowed with
a strictly normal lax monoidal structure, the restricted tensor product
\( \times_\Ring \subset \otimes_R \)
of Sweedler
\cite{doi:10.1007/BF02685882}
and Takeuchi \cite{doi:10.2969/jmsj/02930459}.

The restricted tensor product is given by the formula
\[
	M \times_\Ring N
	\coloneqq \int^s M_{\bar s} \otimes_\Ring N_s
	\coloneqq \int^s\int_r
	{_{\bar r}}M_{\bar s} \otimes {_r}N_s
\]
where \( \bar r, \bar s \) denotes elements
of the opposite ring \( \Ring\Op \),
and \( \int^s M_{\bar s} \otimes_\Ring N_s \) is the cointegral
whose elements
are the combinations \( \sum_i m_i \otimes n_i \in M \otimes_R N \)
for which
\( \sum_i  m_i \cdot {\bar s} \otimes n_i
= \sum_i m_i \otimes n_i \cdot s \) for every \( s \in R \).
We shall extend this bifunctor into a fully-fledged lax monoidal
structure.

\begin{description}
	\item[\( n \)-ary tensors]
		Given
		a number of
		\( (\Ring\Enveloping, \Ring\Enveloping) \)\=/bimodules
		\( (A,B, \dots, Z) \),
		their restricted product shall be defined as
		\[
			A \times_\Ring B \times_\Ring \dots \times_\Ring Z
			\coloneqq \int\limits^{s_1,\dots,s_n}
			\int\limits_{r_1,\dots,r_n}
			\left({_{\bar r_1}}A_{\bar s_1}\right)
			\otimes \left({_{r_1,\bar r_2}}B_{s_1,\bar s_2}\right)
			\otimes \dots
			\otimes \left({_{r_n}}Z_{s_n}\right)
		\]
		which is a new
		\( (\Ring\Enveloping, \Ring\Enveloping) \)\=/bimodule,
		with left and right actions given by
		\[
			(r, s) \cdot
			(a \otimes b \otimes \dots \otimes y \otimes z)
			\cdot (r', s')
			= (r \cdot a \cdot r') \otimes b \otimes \dots
			\otimes y \otimes (\bar s \cdot z \cdot \bar s')\,,
		\]
		for any \( (r,s),(r',s') \in \Ring\Enveloping \), \( a \in A \),
		\( b \in B \), \( y \in Y \) and \( z \in Z \).
		Here as usual,
		\( A \otimes \cdots \otimes Z \) denotes a chosen
		strong monoidal lift of the tensor product of abelian groups;
	\item[Unit]
		The unit object is:
		\[
			\times_\Ring^{-1}
			\coloneqq \End(\Ring)
			\coloneqq \Hom \Integers R R
		\]
		wherein the \( (\Ring\Enveloping,\Ring\Enveloping) \)\=/bimodule
		structure of \( \mathrm{End}(\Ring) \)
		is the one described in the previous subsection
		\UnskipRef{rmk: enrichment over Re-bimodules};
	\item[Associators]
		When \( q \neq -1 \), the natural transformations
		\( \times^p_\Ring(-, \times^q_\Ring, -) \implies
		\times^{p+q}_\Ring \) are given by
		\[
			\begin{tikzcd}
				\displaystyle
				\int\limits^{u_1,\dots,u_p}
				\!\!\!
				\!\!\! 
				\!\!\!
				\int\limits_{t_1,\dots,t_p}
				{_{\bar t_1}}A_{\bar u_1} \otimes \dots \otimes\,
				{_{t_i, \bar t_{i+1}}}\big(\int\limits^{s_1,\dots,s_q}
				\!\!\!
				\!\!\! 
				\!\!\!
				\int\limits_{r_1,\dots,r_q} {_{\bar r_1}}M_{\bar s_1}
				\otimes \dots \otimes
				{_{r_q}}N_{s_q}\big)_{u_i, \bar u_{i+1}}
				\otimes \dots \otimes {_{t_p}}Z_{u_p}
				\dar["\text{dist.}"]
				\\
				\displaystyle
				\int\limits^{u_1,\dots,u_p} \int\limits_{t_1,\dots,t_p}
				\int\limits^{s_1,\dots,s_q}
				\int\limits_{r_1,\dots,r_q} 
				{_{\bar t_1}}A_{\bar u_1}
				\otimes \dots
				\otimes {_{t_p}}Z_{u_p}
				\dar["\text{dist.}"]
				\\
				\displaystyle
				\int\limits^{u_1,\dots,u_p}
				\int\limits^{s_1,\dots,s_q}
				\int\limits_{t_1,\dots,t_p}
				\int\limits_{r_1,\dots,r_q} 
				{_{\bar t_1}}A_{\bar u_1}
				\otimes \dots
				\otimes {_{t_p}}Z_{u_p}
			\end{tikzcd}
		\]
		where \( (A, \dots, Z) \) is any sequence of
		\( (p+q+1) \)
		\( (\Ring\Enveloping, \Ring\Enveloping) \)-bimodules
		and where \( i \) denotes the position of the insertion of
		\( \times^q_\Ring \) in \( \times^p_\Ring \).
		The first map amounts to the commutativity of integrals with
		the tensor product of abelian groups together with the
		distributivity of cointegrals with the same tensor.
		Lastly the second arrow is the usual distributivity
		\( \int^\ast \int_\ast \implies \int_\ast \int^\ast \)
		of cointegrals over integrals;

	\item[Unitors]
		If \( q=-1 \), we define:
		\begin{align*}
			\End(\Ring) \times_\Ring A \times_\Ring \dots
			\times_\Ring Z
			&\longrightarrow
			A \times_\Ring \dots \times_\Ring Z
			\\
			\textstyle
			\sum_i \phi_i
			\otimes a_i \otimes \cdots
			\otimes z_i
			&\longmapsto \textstyle\sum_i
			\phi_i(1_\Ring) \cdot a_i \otimes \cdots \otimes z_i
		\end{align*}
		\begin{align*}
			A \times_\Ring \dots
			\times_\Ring Z \times_\Ring \End(\Ring)
			&\longrightarrow
			A \times_\Ring \dots \times_\Ring Z
			\\
			\textstyle
			\sum_i
			a_i \otimes \cdots \otimes z_i \otimes \phi_i
			&\longmapsto \textstyle\sum_i
			a_i \otimes \cdots \otimes \overline{\phi_i(1_\Ring)}
			\cdot z_i
		\end{align*}
		and
		\[
			A \times_\Ring \dots \times_\Ring M \times_\Ring
			\End(\Ring) \times_\Ring N \times_\Ring \dots
			\times_\Ring Z
			\longrightarrow A \times_\Ring \dots \times_\Ring M \times_\Ring N
			\times_\Ring \dots \times_\Ring Z
		\]
		by
		\begin{align*}
			\textstyle
			\sum_i \dots \otimes m_i \otimes \phi_i
			\otimes n_i \otimes \cdots &
			\longmapsto \textstyle\sum_i \cdots \otimes m_i
			\otimes \big(\phi_i(1_\Ring) \cdot n_i\big) \otimes \cdots
			\\
			& \qquad = \textstyle\sum_i
			\cdots
			\otimes \big(\overline{\phi_i(1_\Ring)} \cdot m_i \big)
			\otimes n_i \otimes \cdots
		\end{align*}
		for any \( (\Ring\Enveloping,\Ring\Enveloping) \)\=/bimodules
		\( A, \dots, M, N, \dots, Z \).
		The proof that these maps are
		\( (\Ring\Enveloping,\Ring\Enveloping) \)-linear is
		similar to the low arity case
		\cite[2.2]{doi:10.2969/jmsj/02930459}.
\end{description}

\begin{proposition}
	The structure described above---which
	extends the restricted tensor product
	\( \times_R \)---is
	a (strictly normal) lax monoidal structure on the category
	of \( (\Ring\Enveloping,\Ring\Enveloping) \)\=/bimodules.
\end{proposition}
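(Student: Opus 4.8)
The plan is to verify the axioms of a lax monoidal category \UnskipRef{Definition: oplax monoidal category} for the data $\{\times_\Ring^n\}_{n\geq-1}$ described above (equivalently, the oplax axioms on the opposite category). First I would dispose of the formal points. Since $\times_\Ring^0 = \IdentityFunctor$ and, by inspection of the defining formulæ, every associator into which the symbol $\times_\Ring^0$ is inserted (equivalently, every one whose formula involves an empty family of integrals and cointegrals) is an identity natural transformation, the structure is strictly normal, so the two counitality axioms hold automatically. Functoriality of each $\times_\Ring^n$ is clear since it is assembled from integrals and cointegrals of $(\Ring\Enveloping,\Ring\Enveloping)$\=/linear bifunctors, and the $(\Ring\Enveloping,\Ring\Enveloping)$\=/linearity of the associators and unitors was already addressed above, as in the binary case \cite[2.2]{doi:10.2969/jmsj/02930459}. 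What then remains are the parallel composition and sequential composition axioms, the duals of the parallel and sequential decomposition axioms.

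I would treat first the case in which none of the symbols involved is the unit $\times_\Ring^{-1}$. Recall that each associator $\times_\Ring^p(-,\times_\Ring^q,-) \to \times_\Ring^{p+q}$ is, by construction, a composite: first the integrals and cointegrals hidden in the inner factor $\times_\Ring^q$ are commuted out past the tensor product of abelian groups — using that $\otimes_\Integers$ commutes with integrals in each variable together with the canonical distributivity relating cointegrals and $\otimes_\Integers$ — and then the canonical comparison $\int^\ast\int_\ast \implies \int_\ast\int^\ast$ is applied. So the first real step is to isolate the needed Fubini-type lemmas: integrals commute with integrals, cointegrals commute with cointegrals, $\otimes_\Integers$ commutes with integrals in each variable, the distributivity relating cointegrals and $\otimes_\Integers$, and the canonical map $\int^\ast\int_\ast \implies \int_\ast\int^\ast$ appearing in the construction; and, crucially, their mutual coherence, which comes from uniqueness in the relevant universal properties. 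With these in hand, the parallel composition square closes because the two inserted blocks occupy disjoint slots, so the corresponding rearrangements are independent and natural; and the sequential composition square closes because its two legs are each the canonical comparison map between the same limit\=/of\=/colimit diagram obtained by rearranging a threefold nest of two integrals and two cointegrals.

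For the remaining cases, where one of the inserted symbols is the unit $\End(\Ring)$, the associator is one of the ``substitute $1_\Ring$'' unitor maps, and the corresponding instance of the axiom is a short computation on elements: substituting $1_\Ring$ into two disjoint slots is order\=/independent, and substituting $1_\Ring$ into one slot and then deleting a further unit slot is order\=/independent. The one delicate point is that the middle unitor has two a priori different descriptions — one pushing $\phi(1_\Ring)$ to the right, one pushing $\overline{\phi(1_\Ring)}$ to the left — which one must see agree and are compatible with the outer unitors; this uses exactly the defining relation $m\cdot\bar s\otimes n = m\otimes n\cdot s$ of the restricted tensor product, with $s = \phi(1_\Ring)$.

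The hard part will be the sequential composition axiom in the first case: one must run a triple nest of (co)integrals through the two prescribed ``commute out, then re\=/order'' composites and check that the two resulting canonical comparison maps literally coincide. I expect this to be a mechanical though lengthy diagram chase once the Fubini and distributivity lemmas above have been isolated; every other axiom then follows by the same bookkeeping or by the elementary element computations just indicated, so no conceptual obstruction beyond this is anticipated.
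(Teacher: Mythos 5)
Your proposal is correct and follows essentially the same route as the paper, whose proof is simply the observation that the sequential and parallel composition axioms follow from integral calculus (Fubini-type commutation of integrals and cointegrals, their distributivity over the tensor product, and the canonical map \( \int^\ast\int_\ast \implies \int_\ast\int^\ast \)), the unit cases and \( (\Ring\Enveloping,\Ring\Enveloping) \)\=/linearity being handled exactly as in Takeuchi's low-arity computation. Your write-up merely spells out in more detail what the paper compresses into one line, including the element-level check that the two descriptions of the middle unitor agree via the defining relation of \( \times_\Ring \), which the paper records in the displayed formula for that unitor.
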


\begin{proof}
	Sequential and parallel composition axioms follow straightforwardly
	from integral calculus.
\end{proof}

\subsection{Lax-strong duoidal structure on
	\( (\Ring\Enveloping, \Ring\Enveloping) \)-bimodules}

We are now going to describe how the strong monoidal structure
\( \otimes_{\Ring\Enveloping} \) and the lax monoidal structure
\( \times_\Ring \) form a lax-strong duoidal structure on the
category of \( (\Ring\Enveloping, \Ring\Enveloping) \)-bimodules.
For this, we know that it is enough to describe two sequences
of maps \( \{\chi^p\}_{p \geq -1} \) and \( \{w^p\}_{p \geq -1} \)
\UnskipRef{prop: lax-strong}.
The difficulty here is in checking that the maps are well defined,
the additivity axioms are then straightforward to check.

\begin{description}
	\item[\( (w^p) \)]
		Since \( \Ring\Enveloping \) is generated by \( (1,1) \) as a
		\( (\Ring\Enveloping, \Ring\Enveloping) \)-bimodule,
		in order to describe the coproducts
		\( w^p \From \Ring\Enveloping
		\to \times_\Ring^{p} \Ring\Enveloping \),
		it is enough to give the image of \( (1, 1) \).
		The only constraint being that the image of \( (1, 1) \) must
		be an element on which the left and the right action of
		\( \Ring\Enveloping \) agree.
		
		The map \( w^{-1} \From \Ring\Enveloping \to \End(\Ring) \)
		sends \( (1, 1) \) to the identity endomorphism
		and \( w^p \) sends \( (1, 1) \in \Ring\Enveloping \) to
		\( (1, 1) \otimes \dots \otimes (1,1)
		\in \Ring\Enveloping \times_\Ring \dots
		\times_\Ring \Ring\Enveloping \)
		for every \( n \geq 0 \).
		
	\item[\( (\chi^p) \)]
		We shall follow the example of
		Takeuchi who described the map \( \chi^1 \)
		\cite[1.12]{doi:10.2969/jmsj/02930459}.
		For the case \( p = -1 \), the map
		\[
			\End(R) \otimes_{\Ring\Enveloping} \End(R)
			\longrightarrow \End(R)
		\]
		is given by composition of endomorphisms.
		The case \( p = 0 \) is the identity transformation, so let
		\( p \geq 1 \).
		Let \( \bm M \coloneqq (M^{(0)}, \dots, M^{(p)}) \)
		and \( \bm N \coloneqq (N^{(0)}, \dots, N^{(p)}) \)
		be two families of
		\( (\Ring\Enveloping, \Ring\Enveloping) \)-bimodules.
		Consider the composite
		\( (\Ring\Enveloping, \Ring\Enveloping) \)-bimodule map
		\[
			\begin{tikzcd}
				\left(\times_\Ring^p \bm M\right)
				\otimes_{\Ring\Enveloping} \left(\otimes^p \bm N\right)
				\dar[hook]
				\\
				\displaystyle\int_{a, b, x_1, \dots, x_p}
					{_{\bar x_1}}M^{(0)}_a \otimes
					{_{x_1, \bar x_2}}M^{(1)} \otimes \dots \otimes
					{_{x_p}}M^{(p)}_{\bar b} \otimes
					{_a}N^{(0)}
					\otimes
					N^{(1)} \otimes \dots \otimes {_{\bar b}}N^{(p)}
				\dar["\text{twist}"]
				\\
					\displaystyle\int_{x_1, \dots, x_p}
					\left({_{\bar x_1}}M^{(0)} \otimes_\Ring
					N^{(0)}\right) \otimes
					\left({_{x_1, \bar x_2}}M^{(1)}
						\otimes
					N^{(1)}\right)
					\otimes \dots \otimes
					\left({_{x_p}}M^{(p)}
					\otimes_\Ring N^{(p)}\right)
				\dar["\text{proj}"]
				\\
					\displaystyle\int_{x_1, \dots, x_p}
					\left({_{\bar x_1}}M^{(0)} \otimes_{\Ring\Enveloping}
					N^{(0)}\right) \otimes
					\left({_{x_1, \bar x_2}}M^{(1)}
						\otimes_{\Ring\Enveloping}
					N^{(1)}\right)
					\otimes \dots \otimes
					\left({_{x_p}}M^{(p)}
					\otimes_{\Ring\Enveloping} N^{(p)}\right)
			\end{tikzcd}
		\]
		and let us denote it by \( \phi^p \). Note that
		the middle step of this definition (twist) implicitly
		uses the fact that the symmetric structure on
		a monoidal category can be lifted to an action of 
		the symmetric group on a strong monoidal category
		\cite{Lift}.
\end{description}

\begin{lemma}
	The map \( \phi^p \) induces a map
	\[
		\begin{tikzcd}
			\left( \times_\Ring^p \bm M\right)
			\otimes_{\Ring\Enveloping}
			\left(\times_\Ring^p \bm N\right)
			\rar["\chi^p"]
				& \times^p_\Ring \left(\bm M \otimes_{\Ring\Enveloping}
				\bm N\right)
		\end{tikzcd}
	\]
	of \( (\Ring\Enveloping,\Ring\Enveloping) \)\=/bimodules.
\end{lemma}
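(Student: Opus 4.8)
The plan is to show that the \( (\Ring\Enveloping, \Ring\Enveloping) \)-bimodule map \( \phi^p \) descends, along the two stages that separate its source and target from those of \( \chi^p \), to a well-defined map. By construction \( \phi^p \) is already a morphism of \( (\Ring\Enveloping, \Ring\Enveloping) \)-bimodules --- it is a composite of the inclusion that rewrites \( \otimes_{\Ring\Enveloping} \) as an end, the symmetry ``twist'', and the projection onto \( \otimes_{\Ring\Enveloping} \) in each factor, all of which are \( (\Ring\Enveloping,\Ring\Enveloping) \)-bilinear --- so the only thing left is well-definedness, which splits into two points: (i) that \( \phi^p \) is invariant under the balancing relations that cut out the restricted tensor product in the \( \bm N \)-variables, so that it factors through \( \left(\times_\Ring^p \bm M\right) \otimes_{\Ring\Enveloping} \left(\times_\Ring^p \bm N\right) \); and (ii) that its image in the end \( \Integral{x_1, \dots, x_p} \bigotimes_{k}\big(M^{(k)}\otimes_{\Ring\Enveloping} N^{(k)}\big) \) actually satisfies the cointegral conditions over \( s_1, \dots, s_p \), i.e.\ lands in the sub-bimodule \( \times_\Ring^p\left(\bm M \otimes_{\Ring\Enveloping} \bm N\right) \).

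First I would fix an elementary-tensor representative and unwind the three constituent maps of \( \phi^p \) so that \( \phi^p\big(\big(\sum_i m^0_i \otimes \cdots \otimes m^p_i\big) \otimes_{\Ring\Enveloping} \big(\sum_j n^0_j \otimes \cdots \otimes n^p_j\big)\big) \) is exhibited as \( \sum_{i,j}\big(m^0_i\otimes_{\Ring\Enveloping} n^0_j\big)\otimes\cdots\otimes\big(m^p_i\otimes_{\Ring\Enveloping} n^p_j\big) \), viewed inside the displayed end. For (ii), on each slot \( P^{(k)} = M^{(k)}\otimes_{\Ring\Enveloping} N^{(k)} \) one imposes the right action of the cointegral parameter \( s_k \) and the left action of \( s_{k+1} \) and, using the \( \Ring\Enveloping \)-balancing built into \( \otimes_{\Ring\Enveloping} \), rewrites these in terms of the corresponding (opposite-ring) actions on the factors \( M^{(k)} \) and \( N^{(k)} \); combining the \( k \)-th balancing identity satisfied by \( \sum_i m^0_i\otimes\cdots \) inside \( \times_\Ring^p\bm M \) with the \( k \)-th balancing identity satisfied by \( \sum_j n^0_j\otimes\cdots \) inside \( \times_\Ring^p\bm N \) then yields exactly the identity demanded for membership in \( \times_\Ring^p\left(\bm M \otimes_{\Ring\Enveloping}\bm N\right) \). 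Point (i) is handled symmetrically. Throughout, the ``twist'' must be read via the chosen lift of the symmetry to the strong monoidal lift of \( \otimes \) \cite{Lift}, and one checks that this lift is compatible with the actions being transported, so that the identities above are independent of the chosen representatives and of the implicit (co)end identifications.

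The main obstacle is purely the combinatorial bookkeeping: tracking which of the many left/right, \( \Ring \)- versus \( \Ring\Op \)- versus \( \Ring\Enveloping \)-actions sits on which tensor factor as one passes through the three steps of \( \phi^p \), and matching the resulting constraints precisely against the defining relations of \( \times_\Ring^p\left(\bm M \otimes_{\Ring\Enveloping}\bm N\right) \). This is the evident generalisation of Takeuchi's verification for \( \chi^1 \) \cite[1.12]{doi:10.2969/jmsj/02930459} (and the \( \Ring\Enveloping \)-linearity of each map is proved as in the low-arity case \cite[2.2]{doi:10.2969/jmsj/02930459}); once this well-definedness is in hand, the \( (\Ring\Enveloping,\Ring\Enveloping) \)-linearity of \( \chi^p \) is inherited from that of \( \phi^p \), and --- as already noted --- the additivity axioms relating the various \( \chi^p \) follow routinely from integral calculus.
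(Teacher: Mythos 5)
Your plan is correct in substance, and its computational heart is the same as the paper's: the element chase combining the slotwise \( \otimes_{\Ring\Enveloping} \)-balancing with the defining identities of the restricted products is exactly the displayed chain of equalities in the paper's proof (the higher-arity version of Takeuchi's verification for \( \chi^1 \)). Where you differ is in the packaging of the second half. The paper shows only that \( \phi^p \) kills the \( \otimes_\Ring \)-type balancing relations among the \( \bm N \)-factors --- this is the one place where the cointegral condition satisfied by the element of \( \times_\Ring^p \bm M \) is used --- thereby descending to a map \( (\times_\Ring^p \bm M)\otimes_{\Ring\Enveloping}(\otimes_\Ring^p \bm N)\to\otimes_\Ring^p(\bm M\otimes_{\Ring\Enveloping}\bm N) \); it then obtains \( \chi^p \), together with the fact that it lands in \( \times_\Ring^p(\bm M\otimes_{\Ring\Enveloping}\bm N) \), formally, as the composite with the canonical distributivity of cointegrals over the functor \( (\times_\Ring^p \bm M)\otimes_{\Ring\Enveloping}(-) \), so no separate membership verification is needed. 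You instead verify membership by hand using the cointegral identities of both inputs; this works, but note it is not ``symmetric'' to your point (i): the cointegral identity satisfied by an element of \( \times_\Ring^p \bm N \) holds only modulo the balancing relations among the \( N \)-factors, so your (ii) logically presupposes (i), and (i) itself is the genuinely non-trivial computation, using the \( \bm M \)-side cointegral identity together with the slot balancing --- there is no mirror argument to borrow.

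Two bookkeeping points should be repaired in the write-up. First, \( \times_\Ring^p \bm N \) is a subobject of \( \otimes_\Ring^p \bm N \), cut out by conditions, not a quotient by relations; so ``\( \phi^p \) factors through \( (\times_\Ring^p \bm M)\otimes_{\Ring\Enveloping}(\times_\Ring^p \bm N) \)'' is not the correct statement --- what \( \phi^p \) factors through is \( (\times_\Ring^p \bm M)\otimes_{\Ring\Enveloping}(\otimes_\Ring^p \bm N) \), and \( \chi^p \) is then obtained by restricting along \( \times_\Ring^p \bm N \subset \otimes_\Ring^p \bm N \) and checking (or, as in the paper, arranging formally) that the restriction lands in the cointegral of the target. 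Second, in the paper's conventions the target of \( \phi^p \) is an integral \( \int_{x_1,\dots,x_p} \), i.e.\ a quotient, not an end, and the cointegral conditions pair \emph{right} actions on consecutive slots (right \( \bar s_k \) on slot \( k-1 \) against right \( s_k \) on slot \( k \)), not a right action on one slot with a left action on the next; keeping these actions straight is precisely what makes the element chase close up, so the sketch should be executed with that corrected bookkeeping.
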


\begin{proof}
	Since tensor products commute with integrals,
	one can swap the integral out of the formula
	for the left hand side of the map \( \chi^p \)
	that we are going to define.
	Let \( \sum_i m_i^{(0)} \otimes \dots \otimes m_i^{(p)} \)
	be an element of \( \times^p_\Ring \bm M \),
	and let
	\( n^{(0)} \otimes \dots \otimes n^{(p)} \in \otimes^p \bm N \).
	Then for every \( 0 \leq j \leq p-1 \) and every \( c \in \Ring \),
	\begin{align*}
		\dots \otimes m_i^{(j)} \otimes \bar c n^{(j)}
		\otimes m_i^{(j+1)} \otimes n^{(j+1)}
		\otimes \cdots
		&=
		\dots \otimes m_i^{(j)}\bar c \otimes n^{(j)}
		\otimes m_i^{(j+1)} \otimes n^{(j+1)}
		\otimes \cdots
		\\
		&=
		\dots \otimes
		m_i^{(j)} \otimes n^{(j)}
		\otimes m_i^{(j+1)} c \otimes n^{(j+1)} \otimes \cdots
		\\
		&=
		\dots \otimes
		m_i^{(j)} \otimes n^{(j)}
		\otimes m_i^{(j+1)} \otimes c n^{(j+1)} \otimes \cdots
	\end{align*}
	in the target of \( \phi^p \),
	where summation symbols are omitted.
	This shows that \( \phi^p \) factors through the quotient
	given by integration over \( c_1, \dots, c_p \) and that
	the map
	\[
		\begin{tikzcd}
			\left( \times_\Ring^p \bm M\right)
			\otimes_{\Ring\Enveloping}
			\left(\otimes_\Ring^p \bm N\right)
			\rar
				& \otimes_\Ring^p\left(\bm M \otimes_{\Ring\Enveloping}
				\bm N\right)
		\end{tikzcd}\,.
	\]
	induced by \( \phi^p \) is well defined. Finally,
	the map \( \chi^p \) can be obtained as the composition
	\[
		\begin{tikzcd}
			\
			\left( \times_\Ring^p \bm M\right)
			\otimes_{\Ring\Enveloping}
			\left(\int^{\bd}\otimes_\Ring^p \bm N\right)
			\rar["\text{distr.}"]
				& \displaystyle\int^{\bd}
				\left(\times^p_\Ring \bm M\right)
				\otimes_{\Ring\Enveloping}
				\left(\otimes_\Ring^p \bm N\right)
				\rar
				& \displaystyle\int^{\bd}
				\otimes^p_\Ring \left(\bm M \otimes_{\Ring\Enveloping}
				\bm N\right)
		\end{tikzcd}
	\]
	involving the canonical
	distributivity of cointegrals over functors.
\end{proof}

\begin{theorem}
	\label{theorem: lax-strong duoidal structure on Re-bimodules}
	The category of \( (\Ring\Enveloping,\Ring\Enveloping) \)\=/bimodules
	endowed with the normal lax structure \( \times_\Ring \),
	the strong structure
	\( \otimes_{\Ring\Enveloping} \)
	and the maps \( \{w^p\}_{p \geq -1} \)
	and \( \{\chi^p\}_{p \geq -1} \)
	is a lax-strong duoidal category.
\end{theorem}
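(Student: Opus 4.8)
The plan is to reduce everything to the three bundles of conditions isolated in Proposition~\UnskipRef{prop: lax-strong}: that each pair \( (\chi^p, w^p) \) makes \( \times_\Ring^p \) a lax monoidal functor with respect to \( \otimes_{\Ring\Enveloping} \); that the twisted transformations \( \chi^p\,\CanonicalTwist^{1,p} \) make \( \otimes_{\Ring\Enveloping} \) an oplax monoidal functor with respect to \( \times_\Ring \); and that the sequence \( \{w^p\} \) turns the \( \otimes_{\Ring\Enveloping} \)-unit \( \Ring\Enveloping \) into a \( \times_\Ring \)-comonoid. The two genuinely delicate points---that \( \chi^p \) is a well-defined \( (\Ring\Enveloping,\Ring\Enveloping) \)-bimodule map, and that \( w^p \) lands in the balanced part of \( \times_\Ring^p\Ring\Enveloping \)---have already been dispatched: the former by the preceding lemma, the latter by the observation that \( (1,1) \otimes \cdots \otimes (1,1) \) is an element on which the left and right \( \Ring\Enveloping \)-actions agree. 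So what remains is purely coherence bookkeeping, which I would organise in two stages.

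First I would verify the comonoid axioms for \( (\Ring\Enveloping, \{w^p\}) \). Since \( \Ring\Enveloping \) is generated as a bimodule by \( (1,1) \), it suffices to chase this single generator through the associators and unitors of \( \times_\Ring \). By construction those associators are assembled from the commutation of integrals with the tensor of abelian groups and from the canonical distributivity \( \int^\ast\!\int_\ast \Rightarrow \int_\ast\!\int^\ast \), all of which send the class of \( (1,1)\otimes\cdots\otimes(1,1) \) to the class of the analogous pure tensor of copies of \( (1,1) \); the unitors act by \( \phi \mapsto \phi(1_\Ring) \) and \( w^{-1} \) sends \( (1,1) \) to \( \IdentityMorphism_\Ring \). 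Hence both the counit diagram and the coassociativity (``comonoid condition'') diagram of Proposition~\UnskipRef{prop: lax-strong} reduce to an equality of classes of such tensors, which holds on the nose.

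Second I would check the remaining lax- and oplax-functor diagrams by chasing an element \( \sum_i m^{(0)}_i \otimes \cdots \otimes m^{(p)}_i \) of \( \times_\Ring^p\bm M \) against a pure tensor \( n^{(0)} \otimes \cdots \otimes n^{(p)} \) of \( \otimes^p\bm N \) through both legs of each pentagon, square and additivity hexagon. Every arrow occurring is one of: a tensor--integral commutation, a distributivity of (co)integrals, a quotient map \( \otimes_\Ring \to \otimes_{\Ring\Enveloping} \), or the ``twist'' permuting the \( M \)-factors past the \( N \)-factors. Consequently each diagram commutes as soon as the corresponding diagram of permutations commutes in the symmetric-group action on the chosen strong monoidal lift of the tensor of abelian groups---that is, by coherence of the symmetric monoidal structure on \( \Ab \) via the lift of \cite{Lift}---together with the naturality and associativity of (co)integral calculus. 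I expect the main obstacle to be not conceptual but notational: keeping the indexing of the twist synchronised with the ordering of the iterated \( \int^\ast \) and \( \int_\ast \) in the additivity diagrams that mix arities \( p \) and \( q \) (where a \( \times_\Ring^q \) is inserted into a \( \times_\Ring^p \)); once the twist is written out as the evident block permutation this becomes mechanical, and I would present the argument at roughly the level of detail of the proof of Proposition~\UnskipRef{prop: lax-strong}.
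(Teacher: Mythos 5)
Your proposal is correct and follows essentially the same route as the paper's proof: reduce to the three conditions of the lax-strong characterisation\UnskipRef{prop: lax-strong}, note that well-definedness of \( \chi^p \) and \( w^p \) is already handled, verify the comonoid axioms by chasing the generator \( (1,1) \), and dispose of the remaining associativity/additivity coherence via distributivity and commutativity of co/integrals together with the symmetric-group action on the strong monoidal lift of \( \Ab^\otimes \) from \cite{Lift}, with the unitality conditions trivial by strict normality. No gaps beyond the level of detail the paper itself leaves implicit.
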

\begin{proof}
	The fact that \( \{w^p\}_{p \geq -1} \) defines
	a comonoid structure follows straightforwardly
	from its definition (as repeated insertion of
	the unit \( (1,1) \) of \( \Ring\Enveloping \)).
	Similarly, the unitality condition for the pair
	\( (\chi^p, w^p) \) to define a lax monoidal structure
	on the functor \( \times_\Ring^p \) with respect
	to \( \otimes_{\Ring\Enveloping} \) follows simply
	from their definition. The associativity condition
	follows from the property of distributivity
	and commutativity of co/integrals, as well as
	the fact that \( \Ab^\otimes \) is symmetric
	monoidal and hence its lift as a strong monoidal
	category is equipped with an action of the symmetric
	group \cite{Lift}. Finally, the additivity condition
	for the collection \( \{\chi^p\,\tau^{1,p}\}_{p \geq -1} \)
	to define a lax monoidal structure on the functor
	\( \otimes_{\Ring\Enveloping} \) with respect to
	\( \{\times_\Ring^p\}_{n \geq -1} \) is verified
	for the same reasons, while the unitality condition
	is trivial, as both \( \times_\Ring^0 \) and
	\( \chi^0 \) are identities.
\end{proof}

\begin{corollary}
	The tensor product \( \otimes_{\Ring\Enveloping} \) endows
	the category of \( \times_R \)-comonoids with a strong monoidal
	structure.
\end{corollary}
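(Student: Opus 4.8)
The plan is to obtain the corollary from the general theory already developed, rather than constructing the monoidal structure by hand. By Theorem~\ref{theorem: lax-strong duoidal structure on Re-bimodules}, the category of \( (\Ring\Enveloping, \Ring\Enveloping) \)\=/bimodules carries a lax-strong duoidal structure \( \Category D^{\times_\Ring,\, \otimes_{\Ring\Enveloping}} \), which is in particular lax-oplax duoidal. Applying Proposition~\ref{prop:monoids=lax+comonoid=oplax} with \( \lax = \times_\Ring \) and \( \oplax = \otimes_{\Ring\Enveloping} \) then immediately produces an oplax monoidal structure on the category \( \mathsf{Comon}(\Category D^{\times_\Ring}) \) of \( \times_\Ring \)\=/comonoids, whose \( n \)\=/ary tensor sends \( (C_0, \dots, C_n) \) to \( C_0 \otimes_{\Ring\Enveloping} \dots \otimes_{\Ring\Enveloping} C_n \) equipped with the coproduct and counit assembled from the \( \delta_{C_i} \), \( \epsilon_{C_i} \) and the structure maps \( \chi^p, w^p \) of Proposition~\ref{prop: lax-strong}. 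So the whole content of the corollary is the promotion of this \emph{oplax} monoidal structure to a \emph{strong} one, which is exactly what one should expect given that \( \otimes_{\Ring\Enveloping} \) is itself strong.

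For that promotion step, the key point I would isolate is that the forgetful functor \( \Forget \From \mathsf{Comon}(\Category D^{\times_\Ring}) \to \Category D \) is a \emph{strict} morphism of oplax monoidal categories: by the construction underlying Proposition~\ref{prop:monoids=lax+comonoid=oplax}, the decomposition natural transformations of \( \otimes_{\Ring\Enveloping} \) on \( \mathsf{Comon}(\Category D^{\times_\Ring}) \) are literally the decomposition natural transformations of \( \otimes_{\Ring\Enveloping} \) on \( \Category D \), merely upgraded to \( \times_\Ring \)\=/comonoid morphisms. Since \( \otimes_{\Ring\Enveloping} \) is a strong monoidal structure on \( \Category D \)---equivalently, by the remark turning a monoidal category into a strong oplax monoidal one, its associated oplax monoidal structure has invertible decompositions---all those transformations are already isomorphisms in \( \Category D \). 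Moreover \( \Forget \) is faithful and conservative, because the \( \Category D \)\=/inverse of a \( \times_\Ring \)\=/comonoid morphism whose underlying bimodule map is invertible is again a \( \times_\Ring \)\=/comonoid morphism (one inverts the two compatibility squares with \( \delta \) and \( \epsilon \)). Hence the decomposition transformations of the induced oplax monoidal structure on \( \mathsf{Comon}(\Category D^{\times_\Ring}) \), being sent by a conservative functor to isomorphisms, are themselves isomorphisms; so that structure is strong, and unwinding `strong oplax monoidal' back into `strong monoidal' yields the statement.

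The only step that needs genuine attention---and the one I would call the main obstacle---is the strictness of \( \Forget \) for the two monoidal structures, i.e.\ correctly matching the induced decompositions on \( \times_\Ring \)\=/comonoids with those of \( \otimes_{\Ring\Enveloping} \); this is routine once the construction behind Proposition~\ref{prop:monoids=lax+comonoid=oplax} is spelled out. If one prefers an argument that does not invoke that identification, the alternative is to verify directly that the associator and the two unitors of \( \otimes_{\Ring\Enveloping} \) on bimodules are morphisms of \( \times_\Ring \)\=/comonoids: this is precisely the information encoded by the associativity and unitality axioms on the pairs \( (\chi^p, w^p) \) from Proposition~\ref{prop: lax-strong}, and strictness together with strength then follow exactly as above. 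All remaining verifications---the coherence of the associator and unitors inside \( \mathsf{Comon}(\Category D^{\times_\Ring}) \)---are inherited from \( \Category D \) through the faithful functor \( \Forget \), so no separate computation is required.
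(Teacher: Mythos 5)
Your proposal is correct and follows the same route the paper intends: the corollary is stated without proof precisely because it is the specialisation of Proposition~\ref{prop:monoids=lax+comonoid=oplax} to the lax-strong duoidal structure of Theorem~\ref{theorem: lax-strong duoidal structure on Re-bimodules}, with strength of the induced structure on \( \times_\Ring \)\=/comonoids coming from the fact that its structural maps have the (invertible) decompositions of \( \otimes_{\Ring\Enveloping} \) as underlying maps and the forgetful functor from comonoids is conservative. Your explicit treatment of the strictness/conservativity step is exactly the implicit content of the paper's argument, so there is nothing to add.
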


\begin{corollary}
	The restricted tensor product \( \times_\Ring \) endows
	the category of \( \Ring\Enveloping \)-rings
	(the \( \otimes_{\Ring\Enveloping} \)-monoids) with
	a strictly normal lax monoidal structure.
\end{corollary}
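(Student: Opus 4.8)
The plan is to identify \( \Ring\Enveloping \)-rings with the monoids of the strong monoidal category \( (\BimodulesOn{\Ring\Enveloping}, \otimes_{\Ring\Enveloping}) \), and then to invoke the general transfer of a lax monoidal structure to a category of monoids inside a lax-oplax duoidal category. First I would recall that a monoid in \( (\BimodulesOn{\Ring\Enveloping}, \otimes_{\Ring\Enveloping}) \) is precisely an \( \Ring\Enveloping \)-ring: the data of an object \( A \) together with \( \mu \From A \otimes_{\Ring\Enveloping} A \to A \) and a unit \( \Ring\Enveloping \to A \) (the unit object of \( \otimes_{\Ring\Enveloping} \)) is the same as the data of a ring \( A \) equipped with a ring morphism \( \Ring\Enveloping \to A \), the bimodule structure being the restriction of scalars along this morphism and the \( \Ring\Enveloping \)-balancedness of \( \mu \) being automatic. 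Hence \( \Ring\Enveloping\text{-rings} = \mathsf{Mon}(\BimodulesOn{\Ring\Enveloping}^{\otimes_{\Ring\Enveloping}}) \).

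Next I would apply Theorem \UnskipRef{theorem: lax-strong duoidal structure on Re-bimodules}: \( \BimodulesOn{\Ring\Enveloping} \) carries a lax-strong duoidal structure with lax part \( \times_\Ring \) and strong part \( \otimes_{\Ring\Enveloping} \), so in particular it is a lax-oplax duoidal category in the sense of Definition \UnskipRef{def:lax-oplax}. Proposition \UnskipRef{prop:monoids=lax+comonoid=oplax} then applies and produces a lax monoidal structure on \( \mathsf{Mon}(\BimodulesOn{\Ring\Enveloping}^{\otimes_{\Ring\Enveloping}}) \), that is, on the category of \( \Ring\Enveloping \)-rings, whose \( n \)-ary tensor has underlying bimodule \( A_0 \times_\Ring \dots \times_\Ring A_n \) equipped with the monoid structure induced by the maps \( \chi^n \) and \( w^n \) of the duoidal structure. (This is exactly the one-object instance of the phenomenon recorded in the remark following Theorem \UnskipRef{corollary:lax oplax duoidal}.)

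Finally I would check strict normality. By construction of the transferred structure (as in the proof of Proposition \UnskipRef{prop:monoids=lax+comonoid=oplax}), the underlying-bimodule functor \( \mathsf{Mon}(\BimodulesOn{\Ring\Enveloping}^{\otimes_{\Ring\Enveloping}}) \to \BimodulesOn{\Ring\Enveloping} \) is faithful and \emph{strict} monoidal from the transferred lax structure to \( \times_\Ring \): the associativity and unit constraints of the transferred structure are, on underlying bimodules, literally those of \( \times_\Ring \). Now the earlier proposition identifying \( \times_\Ring \) as a strictly normal lax monoidal structure tells us \( \times_\Ring^0 = \IdentityFunctor \), its counit is an identity, and the two decomposition transformations involving \( \times_\Ring^0 \) are identities; and since \( \chi^0 = \IdentityFunctor \), the transferred \( \lax^0 \) sends a ring \( A \) to \( A \) with its own multiplication. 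Faithfulness of the forgetful functor then forces the transferred counit and the decomposition transformations involving \( \lax^0 \), being identities on underlying bimodules, to be identities of natural transformations. Hence the transferred lax monoidal structure on \( \Ring\Enveloping \)-rings is strictly normal, which proves the corollary.

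The argument is therefore almost entirely formal once the duoidal structure of Section \SectionRef{sec:Re-bimod} is available; the only step requiring care is pinning down the transferred unit object and unitors precisely enough to see that they are literal identities rather than merely invertible, which amounts to tracking the construction of the lax structure on monoids through the data \( (\chi^\bullet, w^\bullet) \) and using \( \chi^0 = \IdentityFunctor \) together with \( \times_\Ring^0 = \IdentityFunctor \). I do not expect any genuine obstacle beyond this bookkeeping.
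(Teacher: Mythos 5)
Your proposal is correct and follows essentially the route the paper intends: the corollary is deduced from the lax-strong duoidal structure theorem together with the proposition that \( \lax \) induces a lax monoidal structure on \( \mathsf{Mon}(\Category D^\oplax) \), applied to \( \Category D = \BimodulesOn{\Ring\Enveloping} \) with \( \oplax \) the strong lift of \( \otimes_{\Ring\Enveloping} \) and \( \lax = \times_\Ring \). Your extra bookkeeping on strict normality (using \( \times_\Ring^0 = \IdentityFunctor \), \( \chi^0 = \IdentityFunctor \) and faithfulness of the forgetful functor) is exactly the right verification, which the paper leaves implicit.
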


\begin{remark}
	This last corollary was already proven by Day and Street
	\cite[4.1]{doi:10.1090/fic/043/08}.
\end{remark}

\subsection{Bialgebroids}

\begin{definition}[(Bialgebroid)
\cite{doi:10.2969/jmsj/02930459,doi:10.1023/A:1008608028634}]
	A bialgebroid over a ring \( \Ring \) consists in
	a \( (\Ring\Enveloping,\Ring\Enveloping) \)\=/bimodule
	\( A \) endowed with:
	\begin{itemize}
		\item Two morphisms
			of \( (\Ring\Enveloping,\Ring\Enveloping) \)\=/bimodules 
			\[
				\mu_A\From A \otimes_{\Ring\Enveloping}A
				\longrightarrow A 
				\qand \eta_A\From \Ring\Enveloping
				\longrightarrow A
			\]
			making the following associativity and unitality diagrams
			\[
				\begin{tikzcd}[column sep = tiny]
					(A\otimes_{\Ring\Enveloping} A)
					\otimes_{\Ring\Enveloping}A
					\arrow[rr, "\IsIsomorphicTo"]
					\arrow[d,
					"\mu_A\otimes_{\Ring\Enveloping}-"]
					&
					&
					A\otimes_{\Ring\Enveloping}( A
					\otimes_{\Ring\Enveloping}A)
					\arrow[d,"-\otimes_{\Ring\Enveloping}\mu_A"]
					\\
					A\otimes_{\Ring\Enveloping} A
					\arrow[rd,"\mu_A",swap]
					&
					&
					A\otimes_{\Ring\Enveloping} A
					\arrow[ld,"\mu_A"] \\
					& A &
				\end{tikzcd}
				\begin{tikzcd}
					\Ring\Enveloping \otimes_{\Ring\Enveloping} A
					\ar[dr,
					"\eta_A \otimes_{\Ring\Enveloping} -" description]
					\ar[rdd, bend right, "\IsIsomorphicTo" description]
					&[-5ex] &[-5ex]
					A \otimes_{\Ring\Enveloping} \Ring\Enveloping
					\ar[dl,
					"- \otimes_{\Ring\Enveloping} \eta_A" description]
					\ar[ldd, bend left, "\IsIsomorphicTo" description]
					\\
					& A \otimes_{\Ring\Enveloping} A
					\dar["\mu_A"] &
					\\
					& A &
				\end{tikzcd}
			\]
			commute;
		\item Two morphisms of
			\( (\Ring\Enveloping,\Ring\Enveloping) \)\=/bimodules
			\[
				\delta_A
				\From  A
				\longrightarrow A \times_\Ring A
				\qand
				\epsilon_A\From A
				\longrightarrow \End(\Ring)
			\]
			making the following coassociativity and counitality
			diagrams
			\[
				\begin{tikzcd}[column sep = tiny]
					&[-7ex]
					A
					\arrow[dl,"\delta_A",swap]
					\arrow[dr,"\delta_A"]
						&[-7ex]
					\\
					A \times_\Ring A
					\arrow[d,
					"{\delta_A \times_\Ring -}"{left}]
					& &
					A\times_\Ring A
					\arrow[d, "{-\times_\Ring\delta_A}"]
					\\
					(A\times_\Ring A)
					\times_R A
					\arrow[dr]
					&&A\times_\Ring
					(A\times_R A)
					\arrow[dl]
					\\
					& A\times_\Ring A
					\times_R A &
				\end{tikzcd}
				\begin{tikzcd}[column sep = small]
					&[-2ex] A
					\ar[ddd, "\id_A" description]
					\ar[dl,"\delta_A",swap]
					\ar[dr, "\delta_A"]
						&[-2ex]
					\\
					A \times_\Ring A
					\dar["\epsilon_A \times_\Ring -" swap]
					& & A \times_\Ring A
					\dar["- \times_\Ring \epsilon_A"]
					\\
					\End(R)\times_\Ring A
					\ar[dr]
					& & A \times_\Ring \End(\Ring)
					\ar[dl]
					\\
					& A
				\end{tikzcd}
			\]
			commute.
	\end{itemize}
	In addition, the following compatibility diagrams
	\[
		\begin{tikzcd}
			A \otimes_{\Ring\Enveloping} A
			\ar[dd, "\mu_A" swap]
			\ar[dr, "\delta_A \otimes_{\Ring\Enveloping} \delta_A"]
			\\
			& (A \times_\Ring A)
			\otimes_{\Ring\Enveloping}
			(A \times_\Ring A)
			\ar[dd]
			\\
			A
			\ar[dd, "\delta_A" swap]
			\\
			& (A \otimes_{\Ring\Enveloping} A)
			\times_\Ring
			(A \otimes_{\Ring\Enveloping} A)
			\ar[dl, "\mu_A \times_R \mu_A"]
			\\
			A \times_\Ring A
		\end{tikzcd}
		\quad
		\begin{tikzcd}[sep=large]
			\Ring\Enveloping 
			\arrow[dr]
			\arrow[r, "\eta_A"]
			& A 
			\arrow[d, "\epsilon_A"{right}] \\
			& \End(\Ring)
		\end{tikzcd}
	\]
	\[
		\begin{tikzcd}[sep=large]
			\Ring\Enveloping
			\arrow[d]
			\arrow[r, "\eta_A"]
			& A
			\arrow[d, "\delta_A"{right}]\\
			\Ring\Enveloping \times_\Ring \Ring\Enveloping
			\arrow[r, "\eta_A \times_\Ring \eta_A"]
			& A \times_\Ring A
		\end{tikzcd}
		\quad
		\begin{tikzcd}[sep=large]
			A \otimes_{\Ring\Enveloping} A
			\arrow[d, "\epsilon_A
			\otimes_{\Ring\Enveloping} \epsilon_A"{left}]
			\arrow[r, "\mu_A"]
			& A
			\arrow[d, "\epsilon_A"{right}] \\
			\End(\Ring) \otimes_{\Ring\Enveloping} \End(\Ring)
			\arrow[r]
			& \End(\Ring)
		\end{tikzcd}
	\]
	are also required to commute.
\end{definition}

\begin{theorem}
	Bialgebroids are the bimonoids of the lax-strong
	duoidal category
	\( \BimodulesOn
	{\Ring\Enveloping}^{\times_R, \otimes_{\Ring\Enveloping}} \).
\end{theorem}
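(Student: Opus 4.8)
The plan is to deduce the theorem directly from the reconstruction of bimonoids in a lax-oplax duoidal category \UnskipRef{Truncation of bimonoids}, applied to the lax-strong duoidal structure on \( \BimodulesOn{\Ring\Enveloping} \) just established \UnskipRef{theorem: lax-strong duoidal structure on Re-bimodules}, together with a term-by-term identification of the resulting data with the data defining a bialgebroid. First I would record the special features of the situation: here \( \oplax = \otimes_{\Ring\Enveloping} \) is strong, \( \lax = \times_\Ring \) is strictly normal, \( 1_\oplax = \Ring\Enveloping \) and \( 1_\lax = \End(\Ring) \); in particular \( \oplax^0 \) and \( \lax^0 \) are identity functors, so that the unitality diagram of a \( \oplax \)-monoid and the counitality diagram of a \( \lax \)-comonoid in \UnskipRef{Truncation of bimonoids} collapse to the usual triangular forms appearing in the bialgebroid definition.

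Applying Proposition \UnskipRef{Truncation of bimonoids} then says that a bimonoid in this duoidal category is precisely an object \( A \) of \( \BimodulesOn{\Ring\Enveloping} \) together with \( (\Ring\Enveloping,\Ring\Enveloping) \)-linear maps
\[
	\mu_A \From A \otimes_{\Ring\Enveloping} A \to A,\quad
	\eta_A \From \Ring\Enveloping \to A,\quad
	\delta_A \From A \to A \times_\Ring A,\quad
	\epsilon_A \From A \to \End(\Ring)
\]
making \( (A,\mu_A,\eta_A) \) a monoid for \( \otimes_{\Ring\Enveloping} \), \( (A,\delta_A,\epsilon_A) \) a comonoid for \( \times_\Ring \), and the four compatibility squares (encoding that \( \mu_A,\eta_A \) are comonoid morphisms, equivalently that \( \delta_A,\epsilon_A \) are monoid morphisms) commute. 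These are visibly the associativity/unitality, coassociativity/counitality and compatibility diagrams of the definition of a bialgebroid, and the morphisms agree on the nose; so the whole content of the proof reduces to checking that the unnamed structural arrows occurring in the bimonoid compatibility squares are exactly the ones occurring in the bialgebroid compatibility squares.

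This last verification is where the (still routine) work lies, and I would carry it out by tracing through the construction of \( \{\chi^p\}_{p\ge -1} \) and \( \{w^p\}_{p\ge -1} \). Concretely: the arrow \( 1_\oplax \to 1_\lax \) in the second square is \( w^{-1} \), namely \( \Ring\Enveloping \to \End(\Ring) \) sending \( (1,1) \) to \( \mathrm{id}_\Ring \); the arrow \( 1_\oplax \to 1_\oplax \times_\Ring 1_\oplax \) in the third square is \( w^{1} \), sending \( (1,1) \) to \( (1,1)\otimes(1,1) \); the arrow \( 1_\lax \otimes_{\Ring\Enveloping} 1_\lax \to 1_\lax \) in the fourth square is \( \chi^{-1} \), i.e. composition of \( \Ring \)-endomorphisms; and the arrow \( (A\times_\Ring A)\otimes_{\Ring\Enveloping}(A\times_\Ring A) \to (A\otimes_{\Ring\Enveloping}A)\times_\Ring(A\otimes_{\Ring\Enveloping}A) \) in the first square is \( \chi^{1} \), the map constructed in the preceding lemma. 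Once these four identifications are in place the two lists of axioms coincide literally and the theorem follows. I do not expect a genuine obstacle; the only delicate point is to make sure that the instance of \( \chi^{1,1} \) appearing in the bimonoid square really coincides, up to the symmetry isomorphisms of \( \Ab^\otimes \), with the explicit map \( \chi^{1} \) — which is exactly what Proposition \UnskipRef{prop: lax-strong}, reconstructing the full duoidal structure from \( (\chi^{\pm 1}, w^{\pm 1}) \), guarantees.
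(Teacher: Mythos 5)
Your proposal is correct and follows essentially the same route as the paper: the paper's proof likewise invokes the reconstruction theorem for bimonoids \UnskipRef{Truncation of bimonoids}, using the (strict) normality of \( \times_\Ring \) and \( \otimes_{\Ring\Enveloping} \) and the choice of strong monoidal lift of \( \otimes_{\Ring\Enveloping} \), to match the truncated bimonoid data with the bialgebroid axioms. Your explicit identification of the structural arrows \( w^{\pm 1} \) and \( \chi^{\pm 1} \) just spells out what the paper leaves implicit.
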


\begin{proof}
	This comes from the fact that both \( \times_R \) and
	\( \otimes_{\Ring\Enveloping} \) are strictly normal,
	one then only needs to apply the reconstruction
	theorem for bimonoids
	\UnskipRef{Truncation of bimonoids}, modulo the
	lift of \( \otimes_{\Ring\Enveloping} \) from a monoidal
	structure to a strong one.
\end{proof}

\begin{remark}
	The fact that bialgebroids can be described as
	bimonoids in a lax-oplax duoidal category
	was considered in the book of Aguiar and Mahajan
	\cite[6.45]{doi:10.1090/crmm/029}
	and its corollary---that they can be described
	as \( \times_R \)-comonoids
	in the category
	of \( \Ring\Enveloping \)-rings---was
	proven by Day and Street
	\cite[4.3]{doi:10.1090/fic/043/08}.
\end{remark}

\section{Comparison with other enrichment theories}
\label{Sec: comparison of enrichment theories}

We conclude by displaying a few comments
on the relations between the previously introduced theory
of enrichment over an oplax monoidal category  
and the theories of enrichment over strong monoidal
categories, multicategories, skew-monoidal categories
and lax monoidal categories. The corresponding 
relations are schematised in the following figure
\UnskipRef{figure:Relations between enrichment theories}.

\begin{figure}
	\[
		\begin{tikzcd}
			\Mon \dar[hook]
				& \Strong \lar["\IsEquivalentTo" swap]
				\dar[hook]
				\rar[hook]
					& \Oplax
					\rar[hook]
						& \Multi
			\\
			\Skew
				& \Lax
		\end{tikzcd}
	\]
	\caption{Relations between enrichment theories}
	\label{figure:Relations between enrichment theories}
\end{figure}
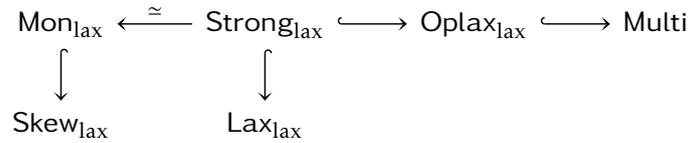

\subsection{Enrichment over monoidal categories}
\label{sec:enrichment over monoidal categories}

We claimed in the introduction that enriching over an oplax
monoidal base extends the notion of enrichment over
a monoidal base.
This is backed by the fact that the definitions
of a \( \Category V \)-category are similar
when \( \Category V \) is either endowed with
a monoidal or an oplax monoidal structure.

However, it is not straightforward to relate the 2-category
of monoidal categories and lax functors to the one
of oplax monoidal categories and lax functors.
Indeed, even though there is an obvious forgetful functor
\[
	\Strong \longrightarrow \Mon
\]
one needs a structural result to show
the equivalence.

The equivalence was shown by Leinster
\cite[3.2.2,3.2.3,3.2.4]{doi:10.1017/cbo9780511525896}
using coherence theorems for generalised monoidal
categories.
Another way of obtaining this result is to use, on the one hand,
the classical coherence theorem for monoidal categories
to show that each monoidal category admits an essentially
unique strong oplax lift given by any choice
of parenthesising and, on the other hand, the reconstruction
result for lax functors
\UnskipRef{prop:truncation_lax_functors}
given in the first
section.
This way, one can promote the equivalence result 
to a \( 2 \)\=/equivalence
by adding monoidal transformations as the \( 2 \)\=/cells
on each side.

This makes the theory of categories enriched over oplax monoidal
categories a direct extension of the classical
theory of categories enriched over a monoidal category, as claimed.

\subsection{Enrichment over multicategories}
We discussed
how the definition
of multicategories can be slightly generalised in order
for the embedding of oplax monoidal categories into
multicategories to become fully faithful
\SectionRef{sec:multi}. 

One can check that given a normal oplax monoidal category
\( \Category V^\oplax \), with associated multicategory
\( \Category M \),
the classical notion of category enriched in \( \Category M \)
\cite{arXiv:9901139}
coincides with the notion of \( \Category V \)-category
we outlined in the present paper.
Note
the fact that higher arity composition maps also
arise in a first definition of an enrichment over
a multicategory. However, this extended definition can also be
shown to be equivalent to an abridged one containing only a binary
composition map and an identity map \cite[2.2]{arXiv:9901139},
in complete parallel with what we observed for enrichment
over oplax monoidal categories.

\subsection{Enrichment over skew monoidal categories}
Enriching over skew-monoidal categories is in many
respects similar to enriching over oplax monoidal
categories. Recall that a skew monoidal category
\cite{doi:10.1016/j.aim.2012.06.027} is a category
endowed with a kind of monoidal structure in
which the associator, left and right unitors
may not be invertible.
As for oplax
monoidal categories, one can consider left normal
skew-monoidal categories which are skew-monoidal
categories wherein the left unitor is invertible.

Initially, the notion of enrichment over a skew-monoidal
category \( \Category V \),
called a \( \Category V \)\=/category,
was defined by Street
\cite{doi:10.1016/j.jpaa.2012.09.020}
in a way similar to that of enrichment over
a strong monoidal category, namely as a collection of objects
\( x, y, \dots \) together with hom-objects \( [x,y] \)
which belong to \( \Category V \) and endowed with
composition maps \( [y,z] \otimes [x,y] \to [x,z] \)
and identity maps \( 1_\otimes \to [x,x] \) obeying
some associativity and unitality conditions. However, 
one is faced with the difficulty that the enrichment
structure \( [-,-] \) defines a bifunctor only when
the enrichment base \( \Category V \) is left normal.
To bypass this problem, another notion of category
enriched over a skew monoidal base, referred to
as a skew \( \Category V \)\=/category, was given
by Campbell
\cite{doi:10.1007/s10485-017-9504-0}, consisting in
a category together with a bifunctor \( [-,-] \)
and requiring that the composition and identity maps
are extra/natural.

Although the notions of skew monoidal categories and oplax monoidal
categories both generalise the notion
of monoidal category, they are not comparable in general.
However when
a skew monoidal category is left normal,
we expect that its skew structure
can be lifted to a normal oplax structure,
as in the monoidal case.
Whenever \( \Category V^\oplax \) is an oplax monoidal
category admitting an underlying
skew monoidal category \( \Category V^{\otimes_\oplax} \),
the \( \Category V^{\otimes_\oplax} \)-categories of Street
are isomorphic to the \( \Category V^\oplax \)-categories and
the skew \( \Category V^{\otimes_\oplax} \)-categories of Campbell
are the categories enriched over \( \Category V^\oplax \).

\subsection{Enrichment over lax monoidal categories}
Using chiral definitions to the ones given above,
one can describe a theory of categories enriched over
lax monoidal categories.
This has been used for example by Batanin and Weber
\cite{doi:10.1007/s10485-008-9179-7} in order to study
higher operads.

Despite their apparent symmetry, there exist several important
differences between the theories of lax
and oplax enriched categories:
\begin{itemize}
	\item
		lax monoidal categories do not generate 
		multicategories and thus categories enriched over
		lax monoidal categories are not part of the general
		scheme of `categories enriched over multicategories';
	\item
		the \( 0 \)-part of the enrichment, that is the
		maps \( [x, y]_\lambda \to [x, y] \), actually encodes
		some information. Indeed, recall that a lax monoidal category
		possesses a unit natural transformation
		\( \IdentityFunctor \to \lax^0 \) which sends
		any object into its image by the endofunctor
		\( \lax^0 \), so that the previous map cannot
		be identified with the aforementioned unit
		and hence requiring its existence is non-trivial
		(unlike when the enrichment base is an oplax
		monoidal category, in which case 
		\( [x,y]_\oplax \to [x,y] \) can, and is,
		identified with the counit);
	\item
		it is not possible to reconstruct
		the composition maps
		\( \mu^n \From [x_0, \dots, x_n]_\lax \to [x_0, x_n] \)
		for \( n \geq 2 \) from the \( n \in \{-1,0,1\} \)
		ones. This stems from the fact that the additivity
		conditions \UnskipRef{def:cat_V-extended} are replaced by
		\[
			\begin{tikzcd}[column sep = huge]
				{[-,\dots,-]_{\lax^p(-,\lax^q,-)}}
				\dar
				\ar[r, "{\lax^p(-,\mu^q,-)}"]
				& {[-,\dots,-]_{\lax^p}} 
				\ar[d, "\mu^p"] \\
				{[-,\dots,-]_{\lax^{p+q}}}
				\ar[r, "\mu^{p+q}"]
				& {[-,-]}
			\end{tikzcd}
		\]
		which cannot be used as a definition of the higher
		arity composition maps in terms of the lower arity
		ones, due to the fact that the associator for lax
		monoidal categories (appearing as the left vertical
		arrow in the above diagram) is in the opposite direction
		compared to the associators of an oplax monoidal category.
\end{itemize}

\subsection*{Acknowledgements}
The authors are grateful to Gabriella B\"ohm for useful correspondence.

\providecommand{\href}[2]{#2}\begingroup\raggedright\endgroup

\end{document}